\documentclass[12pt,longbibliography]{article}

\usepackage[utf8]{inputenc} 
\usepackage[T1]{fontenc}

\usepackage{graphicx}
\usepackage{amsmath}
\usepackage{amsfonts}
\usepackage{mathrsfs}           
\usepackage{amsthm}
\usepackage{amssymb}
\usepackage{color}
\usepackage{enumerate}
\usepackage{comment}
\usepackage{bbold} 

\usepackage[a4paper, margin=2.7cm]{geometry}

\usepackage[colorlinks=true,linkcolor=blue,citecolor=blue,urlcolor=blue,breaklinks]{hyperref}

\usepackage{bbm}

\usepackage{breakurl}
\usepackage{url}

\def\C{\mathbb{C}}
\def\N{\mathbb{N}}
\def\R{\mathbb{R}}

\def\d{\,\mathrm{d}}

\newtheorem{thm}{Theorem}[section]
\newtheorem{cor}[thm]{Corollary}
\newtheorem{lem}[thm]{Lemma}
\newtheorem{prp}[thm]{Proposition}

\theoremstyle{definition}
\newtheorem{dfn}[thm]{Definition}
\theoremstyle{remark}
\newtheorem{rem}[thm]{Remark}
\theoremstyle{example}
\newtheorem{ex}[thm]{Example}

\title{Ultracontractivity of heat semigroups with non-local Robin
boundary conditions via two-sided bounds for a positive eigenfunction}

\author{Christoph Schwerdt$^1$}
\date{
	$^1$ Institute of Mathematics, University of Rostock,\\
	Ulmenstra\ss e 69, 18 057 Rostock, Germany \\
	\ \\
	\today
}

\begin{document}

\maketitle

\begin{abstract}
We study heat semigroups on bounded Lipschitz domains
$\Omega \subset \R^{d}$ with dimension $d>2$ under non-local Robin boundary
conditions. The boundary operator $B \in \mathcal{L}( \mathrm{L}^{2}(\partial\Omega))$ 
is allowed to destroy the positivity of the semigroup. We assume that there 
exists a positive operator $C \in \mathcal{L}(\mathrm{L}^{2}(\partial\Omega))$ such that
$$
|Bu| \leq C|u|
\quad\text{for every }u\in \mathrm{L}^{2}(\partial\Omega),
\qquad
C{\bf 1}\in \mathrm{L}^{\infty}(\partial\Omega).
$$
Under this assumption, we prove that the semigroup generated by the
corresponding uniformly elliptic operator is ultracontractive. More
precisely, its norm from $\mathrm{L}^{2}(\Omega)$ to $\mathrm{L}^{\infty}(\Omega)$ has
short-time order $t^{-d/4}$.\\
\ \\
The main step is the construction of a positive eigenfunction $\phi$
of a dominating elliptic operator such that
$$
0 \ < \ \delta \ \leq \ \phi \ \leq \ M
$$
almost everywhere in $\Omega$. The upper bound is obtained by power
truncations and an iteration of Sobolev exponents. The lower bound
follows by comparison with the Neumann semigroup. The two-sided
estimate gives an $\mathrm{L}^{\infty}$-bound for the comparison semigroup.
Nash's inequality and duality then yield ultracontractivity. 
\end{abstract}

\medskip

\noindent\textbf{Keywords:}
non-local Robin boundary conditions; heat semigroups;
ultracontractivity; positive eigenfunctions;
two-sided eigenfunction bounds; semigroup domination;
Nash's inequality.\\

\tableofcontents

\section{Introduction}

Let $\Omega \subset\R^{d}$ be a bounded Lipschitz domain and let
$A \colon \Omega \to \R^{d\times d}$ be a bounded measurable and
uniformly elliptic coefficient matrix. We consider the heat equation
$$
\frac{\partial u}{\partial t}-\operatorname{div}(A\nabla u) \ = \ 0
$$
in $(0,\infty) \times \Omega$ with a non-local Robin boundary condition 
formally given by
$$
\nu \cdot A\nabla u + Bu \ = \ 0
$$
on $(0,\infty) \times \partial\Omega$. Here, $\nu$ denotes the outer unit 
normal on $\partial\Omega$ and
$$
B \in \mathcal{L}\left( L^{2}(\partial\Omega) \right)
$$
is a bounded boundary operator. The corresponding elliptic operator
$L(A,B)$ is defined by a sesquilinear form on $H^{1}(\Omega)$ and
$-L(A,B)$ generates a $C_{0}$-semigroup
$\left( \mathrm{e}^{-tL(A,B)} \right)_{t\geq0}$
in $L^{2}(\Omega)$.\\
\ \\
If $B$ is a multiplication operator, one obtains a classical local
Robin boundary condition. General operators $B$, however, can describe
non-local interactions on the boundary. They may also destroy the
positivity of the associated semigroup. This makes standard arguments
for positive heat semigroups unavailable.\\
\ \\
We study the ultracontractivity of $\left( \mathrm{e}^{-tL(A,B)} \right)_{t\geq0}$. 
This means that
$$
\mathrm{e}^{-tL(A,B)} \in \mathcal{L} \left( \mathrm{L}^{2}(\Omega), \mathrm{L}^{\infty}(\Omega) \right)
$$
for every $t>0$. Thus the semigroup has a regularising effect.
Jochen Glück and Jonathan Mui proved an ultracontractivity result for this problem in
\cite[Theorem 3.2]{Glueck_Mui_2026}. They assume that
$B \in \mathcal{L}( \mathrm{L}^{2}(\partial\Omega) )$ acts boundedly on both
$\mathrm{L}^{1}(\partial\Omega)$ and $\mathrm{L}^{\infty}(\partial\Omega)$. Under this
assumption, they obtain ultracontractivity of the semigroup and of its adjoint.\\
\ \\
Our aim is to weaken the assumption on the boundary operator. We assume that there exists a
positive operator
$C \in \mathcal{L}\left( \mathrm{L}^{2}(\partial\Omega) \right)$ such that
$$
|Bu| \ \leq \ C|u|
$$
for every $u \in \mathrm{L}^{2}(\partial\Omega)$ and
$C {\bf1} \in \mathrm{L}^{\infty}(\partial\Omega)$.
This assumption implies that $B$ acts boundedly on
$\mathrm{L}^{\infty}(\partial\Omega)$. However, it does not imply that $B$ acts boundedly
on $\mathrm{L}^{1}(\partial\Omega)$. Hence our main theorem applies to a
strictly larger class of boundary operators than the ultracontractivity result of Glück and Mui. 
A simple rank-one operator showing that this difference is genuine is presented in
Section \ref{comparison_Glueck_Mui}.\\
\ \\
For $d>2$, our main result gives constants $K>0$ and $\lambda>0$ such
that
$$
\left\| \mathrm{e}^{-tL(A,B)}u \right\|_{\mathrm{L}^{\infty}(\Omega)} \ \leq \
Kt^{-d/4} \exp\left( t \left(\lambda+\frac{\alpha}{2} \right) \right)
\|u\|_{\mathrm{L}^{2}(\Omega)}
$$
for every $t>0$ and every $u \in \mathrm{L}^{2}(\Omega)$. Thus we obtain the same
short-time order $t^{-d/4}$ as in the result of Glück and Mui. Our
weaker assumption does not automatically give the corresponding
estimate for the adjoint semigroup. Such an estimate follows if an
analogous assumption is imposed on $B^{\ast}$.\\
\ \\
The main idea of the proof is the construction of a suitable positive
comparison semigroup. The domination condition on $B$ and $C$ gives
$$
\left| \mathrm{e}^{-tL(A,B)}u \right| \ \leq \  \mathrm{e}^{-tL(A,-C)}|u|.
$$
We then construct a positive eigenfunction $\phi$ of $L(A,-C)$ and
prove that there exist constants $\delta,M>0$ such that
$$
0 \ < \ \delta \ \leq \ \phi \ \leq \ M
$$
almost everywhere in $\Omega$. These two estimates are the central
part of our argument.\\
\ \\
The existence of $\phi$ follows from the Krein--Rutman theorem applied
to a positive compact resolvent of $L(A,-C)$. The upper bound is proved
by power truncations and an iteration of Sobolev exponents. For the
lower bound, we compare the positive semigroup generated by
$-L(A,-C)$ with the Neumann semigroup. We then use the convergence of
the Neumann semigroup to the mean-value projection in 
$\mathrm{L}^{2}(\Omega)$.\\
\ \\
The two-sided estimate for $\phi$ gives an 
$\mathrm{L}^{\infty}(\Omega)$-bound
for the comparison semigroup and therefore also for
$e^{-tL(A,B)}$. By duality, the adjoint semigroup acts boundedly on
$\mathrm{L}^{1}(\Omega)$. We apply Nash's inequality to the adjoint semigroup
and obtain an $\mathrm{L}^{1}(\Omega)$-to-$\mathrm{L}^{2}(\Omega)$ estimate. 
A second duality argument then gives the asserted
$\mathrm{L}^{2}(\Omega)$-to-$\mathrm{L}^{\infty}(\Omega)$ estimate.\\
\ \\
The article is organised as follows. In Section \ref{Preparations} we introduce the
forms and operators used throughout the article and prove the required
semigroup domination. Section \ref{section_main_theorems} contains the main theorem. 
In Section \ref{comparison_Glueck_Mui} we compare our assumptions and conclusions 
with those of Jochen Glück and Jonathan Mui. Section \ref{proof_main_theorem} proves the 
main theorem, assuming the existence of the eigenfunction with the two-sided estimate stated
above. Finally, Section \ref{bounded_Eigenfunction} constructs this eigenfunction. We first prove
its existence and upper bound and then establish its strictly positive lower bound.

\section{Preliminaries}\label{Preparations}

\subsection*{Setting and notation}

Throughout this article, all function spaces are considered over the
complex field unless explicitly stated otherwise. Let $\Omega \subset \R^{d}$ for $2 \leq d \in \N$ 
be a bounded Lipschitz domain. As usual, a domain is understood to be a non-empty, open and connected set.\\
\ \\
Inequalities between functions are understood pointwise almost
everywhere. In particular, $u \leq v$ means that $u$ and $v$ are
real-valued and that $u(x)\leq v(x)$ for almost every $x \in \Omega$.
A function $u$ is called positive if $u \geq 0$ almost everywhere in $\Omega$.
It is called strictly positive if $u > 0$ almost everywhere.\\
\ \\
A linear operator $T$ on an $\mathrm{L}^{p}$-space is called positive if
$Tu \geq 0$ for every $u \geq 0$. We denote the constant function with
value $1$ by ${\bf1}$. Its underlying domain will always be
clear from the context.\\
\ \\
We write $\mathcal{L}(X,Y)$ for the space of bounded linear operators
from $X$ to $Y$ and 
$$
\mathcal{L}(X)=\mathcal{L}(X,X). 
$$
Furthermore,
$$
\gamma \ \colon \ H^1(\Omega) \ \to \ \mathrm{L}^{2}(\partial\Omega)
$$
denotes the trace operator.\\
\ \\
Let $A \colon \Omega \to \R^{d \times d}$ be
a matrix-valued function whose coefficients $a_{ij} \colon \Omega \to \R$ are bounded and measurable functions. 
We further assume that $A$ is uniformly elliptic on $\Omega$, that is
$$
\exists \alpha > 0 \ \forall \xi \in \C^{d} \ : \ \Re \left(  \left( A(x) \xi \right)^{T} \overline{\xi} \right) 
\geq \alpha \left| \xi \right|^{2}. 
$$
for almost every $x \in \Omega$. The boundary condition is described by  
$B \in \mathcal{L} \left( \mathrm{L}^{2}\left( \partial \Omega \right) \right)$.\\

\begin{lem}\label{gamma}
For every $\varepsilon > 0$ there exists a constant $\beta(\varepsilon) > 0$ such that
$$
\| \gamma(u) \|_{\mathrm{L}^{2}(\partial \Omega)}^{2} \ \leq \ \varepsilon \| \nabla u \|_{\mathrm{L}^{2}(\Omega)^{d}}^{2}
+ \beta(\varepsilon) \| u \|_{\mathrm{L}^{2}(\Omega)}^{2}
$$
for every $u \in H^{1}(\Omega)$ where $\beta(\varepsilon) = \mathcal{O}(\varepsilon^{-1})$ for $\varepsilon \to 0^{+}$. 
Furthermore,  there exists a constant $k > 0$ independent of $u$ and $\varepsilon$ such that
$$
\| \gamma(u) \|_{\mathrm{L}^{2}(\partial \Omega)}^{2} \ \leq \ \varepsilon \| \nabla u \|_{\mathrm{L}^{2}(\Omega)^{d}}^{2}
+ k(1+ \varepsilon^{-1}) \| u \|_{\mathrm{L}^{2}(\Omega)}^{2}
$$
is true for every $\varepsilon > 0$ and any $u \in H^{1}(\Omega)$.
\end{lem}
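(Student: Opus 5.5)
The plan is to reduce the trace inequality to the standard multiplicative trace estimate on bounded Lipschitz domains,
$$
\| \gamma(u) \|_{\mathrm{L}^{2}(\partial \Omega)}^{2} \ \leq \ c \, \| u \|_{\mathrm{L}^{2}(\Omega)} \| u \|_{H^{1}(\Omega)}
$$
for every $u \in H^{1}(\Omega)$, with a constant $c>0$ depending only on $\Omega$. From this, Young's inequality $ab \leq \tfrac{\eta}{2} a^{2} + \tfrac{1}{2\eta} b^{2}$ gives both claims at once.

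First I will establish the multiplicative estimate. Since $\Omega$ is bounded and Lipschitz, there is a vector field $h \in C^{1}(\overline{\Omega};\R^{d})$, for instance built from a partition of unity subordinate to the Lipschitz charts, with $h \cdot \nu \geq \kappa > 0$ almost everywhere on $\partial\Omega$. By density it suffices to take $u \in C^{1}(\overline{\Omega})$, since $C^{\infty}(\overline{\Omega})$ is dense in $H^{1}(\Omega)$ for Lipschitz domains and $\gamma$ is continuous. Applying the divergence theorem to $|u|^{2} h$ yields
$$
\kappa \int_{\partial\Omega} |u|^{2} \d\sigma \ \leq \ \int_{\Omega} \operatorname{div}(h) |u|^{2} \d x + 2 \int_{\Omega} |u| \, |h| \, |\nabla u| \d x .
$$
By Cauchy--Schwarz the right-hand side is bounded by $c_{1} \|u\|_{2}^{2} + c_{2} \|u\|_{2} \|\nabla u\|_{2}$, which is the multiplicative bound.

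Next, for given $\varepsilon>0$, I apply Young's inequality to the product term: $c_{2}\|u\|_{2}\|\nabla u\|_{2} \leq \varepsilon \|\nabla u\|_{2}^{2} + \tfrac{c_{2}^{2}}{4\varepsilon}\|u\|_{2}^{2}$. After dividing by $\kappa$, and replacing $\varepsilon$ by $\kappa\varepsilon$, this gives
$$
\|\gamma u\|^{2} \ \leq \ \varepsilon \|\nabla u\|^{2} + \left( \tfrac{c_{1}}{\kappa} + \tfrac{c_{2}^{2}}{4\kappa^{2}\varepsilon} \right)\|u\|^{2}.
$$
I therefore set $\beta(\varepsilon) := \tfrac{c_{1}}{\kappa} + \tfrac{c_{2}^{2}}{4\kappa^{2}\varepsilon}$. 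This is clearly $\mathcal{O}(\varepsilon^{-1})$ as $\varepsilon\to0^{+}$. With $k := \max\{ c_{1}/\kappa, \, c_{2}^{2}/(4\kappa^{2}) \}$ we get $\beta(\varepsilon) \leq k(1+\varepsilon^{-1})$ for all $\varepsilon>0$, which gives the second statement with $k$ independent of $u$ and $\varepsilon$. The constant $\beta(\varepsilon)$ is positive, as the statement requires.

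The main obstacle is the existence of the transversal field $h$ for a general Lipschitz boundary. Locally, where $\partial\Omega$ is the graph of a Lipschitz function and $\Omega$ lies on one side, the constant vector $-e_{d}$ satisfies $-e_{d}\cdot\nu \geq (1+L^{2})^{-1/2}$. Gluing these local fields with a smooth partition of unity preserves a positive lower bound, because all local contributions have nonnegative normal component. The remaining step, the divergence theorem for $C^{1}$ fields on Lipschitz domains, is standard and will be quoted.
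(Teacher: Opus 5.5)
Your proof is correct, but it takes a different route from the paper. The paper does not prove the trace estimate itself. It quotes the first assertion, including the rate $\beta(\varepsilon)=\mathcal{O}(\varepsilon^{-1})$, from Lemma 2.5 of Gesztesy--Mitrea. It then gets the second assertion by splitting into two regimes. For $\varepsilon\le\varepsilon_0$ it uses $\beta(\varepsilon)\le c/\varepsilon$. For $\varepsilon>\varepsilon_0$ it reuses the inequality at $\varepsilon_0$, whose gradient coefficient is smaller. This gives $k=\beta(\varepsilon_0)+c$.

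You instead derive the underlying multiplicative trace inequality yourself. You build a smooth transversal field $h$ with $h\cdot\nu\ge\kappa>0$ and apply the divergence theorem to $|u|^2h$. Young's inequality then gives the explicit form $\beta(\varepsilon)=c_1/\kappa+c_2^2/(4\kappa^2\varepsilon)$. Since this is affine in $\varepsilon^{-1}$, it yields $\beta(\varepsilon)\le k(1+\varepsilon^{-1})$ for all $\varepsilon>0$ at once, with no case distinction.

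What your version buys:
\begin{itemize}
\item It is self-contained.
\item The constants depend only on your transversality constant $\kappa$, $\|h\|_\infty$ and $\|\operatorname{div}h\|_\infty$.
\item It actually proves the $\mathcal{O}(\varepsilon^{-1})$ rate that the paper imports from the citation.
\end{itemize}
The cost is the standard construction of $h$ by a partition of unity over Lipschitz charts, the divergence theorem on Lipschitz domains, and a density step. You handle all of these correctly; the bound $|h\cdot\nabla|u|^2|\le 2|h|\,|u|\,|\nabla u|$ also holds for complex-valued $u$. The paper's argument, for its part, shows that the second assertion follows from the first for any $\beta$ with $\mathcal{O}(\varepsilon^{-1})$ behaviour, however that $\beta$ was obtained.
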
 

\begin{proof} 
The first assertion follows from \cite[Lemma 2.5]{Gesztesy_Mitrea_2008}. Since 
$\beta(\varepsilon)=O(\varepsilon^{-1})$ as
$\varepsilon \to 0^{+}$, there exist constants $c>0$ and $\varepsilon_{0}>0$ such that
$$
\beta(\varepsilon) \leq \frac{c}{\varepsilon}
$$ 
for every $0 < \varepsilon \leq \varepsilon_{0}$. Hence
$$
\| \gamma(u) \|_{\mathrm{L}^{2}(\partial \Omega)}^{2} \ \leq \ \varepsilon \| \nabla u \|_{\mathrm{L}^{2}(\Omega)^{d}}^{2}
+ c\varepsilon^{-1} \| u \|_{\mathrm{L}^{2}(\Omega)}^{2}
$$
for every $0 < \varepsilon \leq \varepsilon_{0}$. If $\varepsilon > \varepsilon_{0}$, then 
\begin{align*}
\| \gamma(u) \|_{\mathrm{L}^{2}(\partial \Omega)}^{2} & \leq \varepsilon_{0} \| \nabla u \|_{\mathrm{L}^{2}(\Omega)^{d}}^{2}
+ \beta(\varepsilon_{0}) \| u \|_{\mathrm{L}^{2}(\Omega)}^{2} \\
& < \varepsilon \| \nabla u \|_{\mathrm{L}^{2}(\Omega)^{d}}^{2}
+ \beta(\varepsilon_{0}) \| u \|_{\mathrm{L}^{2}(\Omega)}^{2}.
\end{align*}
Thus the second assertion holds for every $k \geq \beta(\varepsilon_{0}) + c$.\\
\end{proof}

\begin{cor}\label{Operator_B}
For every $\varepsilon > 0$ there exists a constant $\beta(\varepsilon) > 0$ such that
\begin{align*}
& \Re \int_{\Omega} A \nabla u \cdot \overline{\nabla u} \d x + 
\Re \int_{\partial \Omega} B\gamma(u) \overline{\gamma(u)} \d \sigma(x) & \\
& \geq \ \left( \alpha - \varepsilon \| B \|_{\mathrm{L}^{2} \to \mathrm{L}^{2}} \right)  \| \nabla u \|_{\mathrm{L}^{2}(\Omega)^{d}}^{2}
- \beta(\varepsilon) \| B \|_{\mathrm{L}^{2} \to \mathrm{L}^{2}}   \| u \|_{\mathrm{L}^{2}(\Omega)}^{2} &
\end{align*}
is true for every $u \in H^{1}(\Omega)$.
\end{cor}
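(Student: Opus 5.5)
The corollary follows by estimating the two terms on the left-hand side separately and then applying the first assertion of Lemma \ref{gamma} with the same $\varepsilon$. The resulting constant $\beta(\varepsilon)$ is the one from that lemma.

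\emph{The gradient term.} Uniform ellipticity is applied pointwise with $\xi = \nabla u(x)$. This gives
$$
\Re \int_{\Omega} A \nabla u \cdot \overline{\nabla u} \d x \ \geq \ \alpha \| \nabla u \|_{\mathrm{L}^{2}(\Omega)^{d}}^{2}.
$$
Here we use that $A\nabla u\cdot\overline{\nabla u} = (A\xi)^{T}\overline{\xi}$, so the real part of the integral is the integral of the real part.

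\emph{The boundary term.} We bound it from below by minus its modulus. By the Cauchy--Schwarz inequality in $\mathrm{L}^{2}(\partial\Omega)$ and the boundedness of $B$,
$$
\Re \int_{\partial \Omega} B\gamma(u) \overline{\gamma(u)} \d \sigma \ \geq \ - \| B \|_{\mathrm{L}^{2} \to \mathrm{L}^{2}} \| \gamma(u) \|_{\mathrm{L}^{2}(\partial\Omega)}^{2}.
$$
Lemma \ref{gamma} bounds $\| \gamma(u) \|^{2}$ by $\varepsilon \| \nabla u \|^{2} + \beta(\varepsilon) \| u \|^{2}$. Multiplying this by $-\|B\|$ reverses the inequality, which is allowed since $\|B\| \geq 0$. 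The boundary term is therefore at least
$$
-\varepsilon\|B\| \, \|\nabla u\|^{2} - \beta(\varepsilon)\|B\| \, \|u\|^{2}.
$$

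\emph{Conclusion.} Adding the two lower bounds gives exactly the stated inequality. There is no real obstacle here. The only point requiring care is the sign bookkeeping when the trace estimate is multiplied by the nonpositive factor $-\|B\|$.
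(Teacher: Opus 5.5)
Your proposal is correct and follows the paper's proof essentially verbatim. Uniform ellipticity gives the gradient term. For the boundary term, $-\Re\int_{\partial\Omega} B\gamma(u)\overline{\gamma(u)}\d\sigma(x) \leq \|B\|_{\mathrm{L}^{2}\to\mathrm{L}^{2}}\|\gamma(u)\|_{\mathrm{L}^{2}(\partial\Omega)}^{2}$ is combined with the first assertion of Lemma \ref{gamma} at the same $\varepsilon$, and your sign handling is right.
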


\begin{proof}
Let $\varepsilon > 0$. By Lemma \ref{gamma} we have
\begin{align*}
- \Re \int_{\partial \Omega} B\gamma(u) \overline{\gamma(u)} \d \sigma(x) & 
\leq \left| \int_{\partial \Omega} B\gamma(u) \overline{\gamma(u)} \d \sigma(x) \right| \leq 
\| B \|_{\mathrm{L}^{2} \to \mathrm{L}^{2}} \| \gamma(u) \|_{\mathrm{L}^{2}(\partial \Omega)}^{2} & \\
\ \\
& \leq \varepsilon \| B \|_{\mathrm{L}^{2} \to \mathrm{L}^{2}}  \| \nabla u \|_{\mathrm{L}^{2}(\Omega)^{d}}^{2}
+ \beta(\varepsilon) \| B \|_{\mathrm{L}^{2} \to \mathrm{L}^{2}}   \| u \|_{\mathrm{L}^{2}(\Omega)}^{2} &
\end{align*}
for every $u \in H^{1}(\Omega)$. On the other hand, the uniform ellipticity gives
$$
\Re \int_{\Omega} A \nabla u \cdot \overline{\nabla u} \d x \ \geq \ \alpha \| \nabla u \|_{\mathrm{L}^{2}(\Omega)^{d}}^{2}.
$$
Combining these two estimates proves the assertion.\\
\end{proof}

\subsection{The $C_{0}$-semigroup $\mathrm{e}^{-tL(A,B)}$ in $\mathrm{L}^{2}\left( \Omega \right)$
for $B \in \mathcal{L}\left( \mathrm{L}^{2}\left( \partial \Omega \right) \right)$} \label{form_a}

We set 
$$
\varepsilon_{0} = \frac{\alpha}{2} (\| B \|_{\mathrm{L}^{2} \to \mathrm{L}^{2}} +1)^{-1}.
$$ 
There exists a constant $\beta(\varepsilon_{0}) > 0$ in the sense of Corollary \ref{Operator_B}.
Choose $\lambda_{0}$ sufficiently large that 
\begin{equation}
\lambda_{0} \geq \beta(\varepsilon_{0}) \| B \|_{\mathrm{L}^{2} \to \mathrm{L}^{2}}.
\end{equation}
We now define the corresponding sesquilinear forms in  $\mathrm{L}^{2}\left( \Omega \right)$.\\

\begin{dfn}
Define an auxiliary sesquilinear form $\tilde{a}(A, B)$ in $\mathrm{L}^{2}\left( \Omega \right)$ by
$$
\tilde{a}(A, B)(u,v) = \int_{\Omega} A \nabla u \cdot \overline{\nabla v} \d x + 
\int_{\partial \Omega} B \gamma(u) \ \overline{\gamma(v)} \d \sigma(x) + \lambda_{0} \int_{\Omega} u \ \overline{v} \d x
$$
for $u,v \in D \left( \tilde{a}(A, B) \right) = H^{1}\left( \Omega \right)$ and the form $a(A, B)$ by
\begin{equation}\label{form_a_B}
a(A, B)(u,v) = \int_{\Omega} A \nabla u \cdot \overline{\nabla v} \d x + 
\int_{\partial \Omega} B \gamma(u) \ \overline{\gamma(v)} \d \sigma(x)
\end{equation}
for $u,v \in D \left( a(A, B) \right) = D \left( \tilde{a}(A, B) \right)$.\\
\end{dfn}

\begin{rem} \label{a(B)_Nash}
The form $a(A,B)$ need not be accretive. In contrast,
Corollary \ref{Operator_B} and the choice of $\lambda_{0}$ imply that the
auxiliary form $\widetilde a(A,B)$ is accretive, since
\begin{equation}\label{tilde_a_accretive}
\Re \tilde{a}(A, B)(u,u) \geq  \frac{\alpha}{2} \| \nabla u \|_{\mathrm{L}^{2}(\Omega)^{d}}^{2} \geq 0
\end{equation}
for every $u \in D \left( \tilde{a}(A, B) \right)$. \\
\end{rem}

Let us show the continuity and closedness of the auxiliary form $\tilde{a}(A, B)$ next. We define the induced norm 
of $\tilde{a}(A, B)$ by
$$
\| u \|_{\tilde{a}(A, B)} = \left( \Re \tilde{a}(A, B)(u,u) +  \| u \|_{\mathrm{L}^{2}\left( \Omega \right)}^{2} \right)^{\frac{1}{2}}
$$
for $u \in D \left( \tilde{a}(A, B) \right)$. It follows from (\ref{tilde_a_accretive}) that
\begin{equation}\label{H1_inequ}
\| u \|_{\tilde{a}(A, B)} \ = \ \left( \Re \tilde{a}(A, B)(u,u) +  \| u \|_{\mathrm{L}^{2}\left( \Omega \right)}^{2} \right)^{\frac{1}{2}}
\ \geq \ \min \left( \frac{\alpha}{2}, 1 \right)^{1/2} \| u \|_{H^{1}(\Omega)}.\\
\end{equation}

\begin{lem}\label{form_properties}
The form $\tilde{a}(A, B)$ is continuous and closed.
\end{lem}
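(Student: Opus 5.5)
Both properties will be reduced to the fact that the form norm $\|\cdot\|_{\tilde a(A,B)}$ is equivalent to the $H^{1}(\Omega)$-norm on $D(\tilde a(A,B)) = H^{1}(\Omega)$. The lower bound is already available: it is exactly (\ref{H1_inequ}). The remaining work is the continuity estimate, which will also supply the matching upper bound.

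\emph{Continuity.} For $u,v \in H^{1}(\Omega)$ we bound the three terms of $\tilde a(A,B)(u,v)$ separately.

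\begin{itemize}
\item The gradient term: since the coefficients $a_{ij}$ are bounded, there is a constant $\|A\|_{\infty}$ (for instance $\max_{ij}\|a_{ij}\|_{\infty}$ times $d$) such that
$$
\left|\int_{\Omega} A\nabla u\cdot\overline{\nabla v}\d x\right| \le \|A\|_{\infty}\,\|\nabla u\|_{\mathrm{L}^{2}(\Omega)^{d}}\,\|\nabla v\|_{\mathrm{L}^{2}(\Omega)^{d}}.
$$
\item The boundary term: by Cauchy--Schwarz on $\partial\Omega$ and boundedness of $B$,
$$
\left|\int_{\partial\Omega} B\gamma(u)\,\overline{\gamma(v)}\d\sigma\right| \le \|B\|_{\mathrm{L}^{2}\to\mathrm{L}^{2}}\,\|\gamma(u)\|_{\mathrm{L}^{2}(\partial\Omega)}\,\|\gamma(v)\|_{\mathrm{L}^{2}(\partial\Omega)}.
$$
Lemma \ref{gamma} with, say, $\varepsilon=1$ gives $\|\gamma(w)\|_{\mathrm{L}^{2}(\partial\Omega)} \le (1+\beta(1))^{1/2}\|w\|_{H^{1}(\Omega)}$. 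Hence the boundary term is bounded by a constant times $\|u\|_{H^{1}}\|v\|_{H^{1}}$.
\item The zero-order term is bounded by $\lambda_{0}\|u\|_{\mathrm{L}^{2}}\|v\|_{\mathrm{L}^{2}}$.
\end{itemize}

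Summing the three bounds yields $M>0$ with
$$
|\tilde a(A,B)(u,v)| \le M\|u\|_{H^{1}(\Omega)}\|v\|_{H^{1}(\Omega)}.
$$
Combining this with (\ref{H1_inequ}) gives
$$
|\tilde a(A,B)(u,v)| \le M\min(\alpha/2,1)^{-1}\|u\|_{\tilde a(A,B)}\|v\|_{\tilde a(A,B)},
$$
which is continuity in the form-norm sense. Since $\tilde a(A,B)$ is accretive by Remark \ref{a(B)_Nash}, this also yields the sectoriality estimate $|\Im\tilde a(u,u)|\le c\,(\Re\tilde a(u,u)+\|u\|^{2})$.

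\emph{Closedness.} Taking $u=v$ in the continuity bound gives $\Re\tilde a(A,B)(u,u) \le M\|u\|_{H^{1}}^{2}$. Therefore
$$
\|u\|_{\tilde a(A,B)}^{2} \le (M+1)\|u\|_{H^{1}(\Omega)}^{2},
$$
and together with (\ref{H1_inequ}) the form norm is equivalent to the $H^{1}(\Omega)$-norm. Now let $(u_{n})$ be Cauchy in $\|\cdot\|_{\tilde a(A,B)}$ with $u_{n}\to u$ in $\mathrm{L}^{2}(\Omega)$. Then $(u_{n})$ is Cauchy in $H^{1}(\Omega)$. By completeness it converges in $H^{1}(\Omega)$ to some $w$, and $L^{2}$-convergence forces $w=u$. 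So $u\in D(\tilde a(A,B))$ and $\|u_{n}-u\|_{\tilde a(A,B)}\to0$ by the norm equivalence, which is closedness.

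Nothing in this argument is delicate. The one point needing care is the boundary term, where the trace must be controlled by the $H^{1}$-norm via Lemma \ref{gamma}.
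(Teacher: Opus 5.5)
Your proof is correct and follows essentially the same route as the paper. You bound the three terms of $\tilde a(A,B)(u,v)$ by $H^{1}(\Omega)$-norms, pass to the form norm via (\ref{H1_inequ}), and get closedness from the equivalence of $\|\cdot\|_{\tilde a(A,B)}$ with the $H^{1}(\Omega)$-norm together with completeness of $H^{1}(\Omega)$. The only difference is that the paper controls the boundary term through the trace operator norm $\|\gamma\|_{H^{1}\to \mathrm{L}^{2}}$, whereas you use Lemma~\ref{gamma} with $\varepsilon=1$, which does not matter.
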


\begin{proof} \
\begin{enumerate}[i.)]
\item We first prove that $\tilde{a}(A, B)$ is continuous. Note that 
$\| u \|_{\mathrm{L}^{2}\left( \Omega \right)} \leq \| u \|_{\tilde{a}(A, B)}$
is true for every $u \in D \left( \tilde{a}(A, B) \right)$. For $u,v \in D \left( \tilde{a}(A, B) \right)$ we obtain
$$
\left| \tilde{a}(A, B)(u,v) \right| \leq \left| \langle A \nabla u, \nabla v \rangle_{\mathrm{L}^{2}\left( \Omega \right)^{d}}  \right|
+ \left| \langle B \gamma(u), \gamma(v) \rangle_{\mathrm{L}^{2}\left( \partial \Omega \right)} \right| 
+ \lambda_{0} \left| \langle u, v \rangle \right|. 
$$
For the principal part, we have
\begin{align*}
\left| \langle A \nabla u, \nabla v \rangle_{\mathrm{L}^{2}\left( \Omega \right)^{d}}  \right| 
& = \left| \int_{\Omega} \left( A(x) (\nabla u)(x) \right)^{T}  \overline{(\nabla v)(x)} \d x  \right| & \\
& \leq \sum_{l=1}^{d} \sum_{k=1}^{d} \int_{\Omega} \left| a_{lk}(x) \right| \ \left| (\partial_{k}u)(x) \right| 
\ \left| (\partial_{l}v)(x) \right| \ \d x & \\
& \leq \max_{l,k} \left\| a_{l,k} \right\|_{\mathrm{L}^{\infty}\left( \Omega \right)}  \sum_{l=1}^{d} \sum_{k=1}^{d}
\left\| u \right\|_{H^{1}\left( \Omega \right)} \left\| v \right\|_{H^{1}\left( \Omega \right)}  & \\
& \leq d^{2}  \left\| A \right\|_{\mathrm{L}^{\infty}\left( \Omega \right)^{d \times d}} 
\left\| u \right\|_{H^{1}\left( \Omega \right)} \left\| v \right\|_{H^{1}\left( \Omega \right)}.  & 
\end{align*}
Consequently,
\begin{align*}
\left| \tilde{a}(A, B)(u,v) \right| & \leq \left| \langle A \nabla u, \nabla v \rangle_{\mathrm{L}^{2}\left( \Omega \right)^{d}}  \right|
+ \left| \langle B \gamma(u), \gamma(v) \rangle_{\mathrm{L}^{2}\left( \partial \Omega \right)} \right| 
+ \lambda_{0} \left| \langle u, v \rangle \right| & \\
\ \\
& \leq d^{2} \left\| A \right\|_{\mathrm{L}^{\infty}\left( \Omega \right)^{d \times d}}  
\left\| u \right\|_{H^{1}\left( \Omega \right)} \left\| v \right\|_{H^{1}\left( \Omega \right)} 
+ \| B \|_{\mathrm{L}^{2} \to \mathrm{L}^{2}} \ 
\| \gamma (u) \|_{\mathrm{L}^{2}\left( \partial \Omega \right)} \ 
\| \gamma (v) \|_{\mathrm{L}^{2}\left( \partial \Omega \right)} & \\
& + \lambda_{0}  \| u \|_{\mathrm{L}^{2}\left( \Omega \right)} \| v \|_{\mathrm{L}^{2}\left( \Omega \right)} & \\
\ \\
& \leq \left( \ d^{2} \ \| A \|_{\mathrm{L}^{\infty}(\Omega)^{d \times d}}  +   \| B \|_{\mathrm{L}^{2} \to \mathrm{L}^{2}} \ 
\| \gamma \|_{H^{1} \to \mathrm{L}^{2}}^{2} \ \right) 
\| u \|_{H^{1}\left( \Omega \right)} \| v \|_{H^{1}\left( \Omega \right)} & \\
& + \lambda_{0}  \| u \|_{\mathrm{L}^{2}\left( \Omega \right)} \| v \|_{\mathrm{L}^{2}\left( \Omega \right)} & \\
\ \\
& \leq K \| u \|_{\tilde{a}(A, B)} \| v \|_{\tilde{a}(A, B)} & 
\end{align*}
for a sufficiently large constant $K > 0$ independent of $u$ and $v$.
\item We now prove that $\tilde{a}(A, B)$ is closed. Let $(u_{n}) \subset D \left( \tilde{a}(A, B) \right)$ be a  
Cauchy sequence with respect to  $\| \cdot \|_{\tilde{a}(A, B)}$. 
By $( \ref{H1_inequ} )$, $(u_{n})$ is Cauchy in $H^{1}(\Omega)$. Hence there exists 
$u \in H^{1}(\Omega) = D \left( \tilde{a}(A, B) \right)$ such that $u_{n} \to u$ in $H^{1}(\Omega)$.
The estimate in terms of the $H^{1}(\Omega)$-norm obtained in part i)
shows that
$$
\begin{aligned}
\| u_{n} -u \|_{\tilde{a}(A,B)}^{2} &\leq | \tilde{a}(A,B)(u_{n}-u,u_{n} -u) | +\|u_n-u\|_{L^{2}(\Omega)}^{2} \\
& \leq K^{\prime} \|u_{n} - u\|_{H^{1}(\Omega)}^{2} \to 0.
\end{aligned}
$$
Thus $\tilde{a}(A,B)$ is closed.
\end{enumerate}
\end{proof}

\begin{cor}\label{C^1_core}
The subspace $C^{1} \left( \overline{\Omega} \right)$ is dense in $D \left( \tilde{a}(A, B) \right)$ 
with respect to $\| \cdot \|_{\tilde{a}(A, B)}$. 
\end{cor}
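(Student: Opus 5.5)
The plan is to reduce the statement to the classical density of $C^{1}(\overline{\Omega})$ (indeed of $C^{\infty}(\overline{\Omega})$, or restrictions of $C_c^\infty(\R^d)$) in $H^{1}(\Omega)$ for bounded Lipschitz domains, and then transfer this density to the form norm by showing that $\| \cdot \|_{\tilde{a}(A,B)}$ and $\| \cdot \|_{H^{1}(\Omega)}$ are equivalent on $D(\tilde{a}(A,B)) = H^{1}(\Omega)$.

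First, I establish the norm equivalence. The lower bound
$$
\| u \|_{\tilde{a}(A,B)} \ \geq \ \min\left(\tfrac{\alpha}{2},1\right)^{1/2} \| u \|_{H^{1}(\Omega)}
$$
is exactly (\ref{H1_inequ}). For the upper bound, I use the continuity estimate from part i.) of the proof of Lemma \ref{form_properties}, which was obtained in terms of the $H^{1}(\Omega)$-norm. Applying it with $v=u$ gives
$$
\| u \|_{\tilde{a}(A,B)}^{2} \ \leq \ |\tilde{a}(A,B)(u,u)| + \| u \|_{\mathrm{L}^{2}(\Omega)}^{2} \ \leq \ K^{\prime} \| u \|_{H^{1}(\Omega)}^{2}.
$$
Here $K^{\prime} = d^{2}\|A\|_{\infty} + \|B\|\,\|\gamma\|^{2} + \lambda_{0} + 1$, and we use $\|\gamma(u)\|_{\mathrm{L}^{2}(\partial\Omega)} \leq \|\gamma\|_{H^{1}\to \mathrm{L}^{2}}\|u\|_{H^{1}(\Omega)}$ from the boundedness of the trace operator. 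Hence both norms are equivalent.

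Second, I invoke the standard approximation result. Since $\Omega$ is a bounded Lipschitz domain, it has the segment (or extension) property. Therefore every $u \in H^{1}(\Omega)$ is the $H^{1}$-limit of restrictions to $\Omega$ of functions in $C_c^{\infty}(\R^{d})$. This follows, for example, from the Stein or Calderón extension operator $E\colon H^{1}(\Omega)\to H^{1}(\R^{d})$ followed by mollification and cutoff. Such restrictions lie in $C^{1}(\overline{\Omega})$, so $C^{1}(\overline{\Omega})$ is dense in $H^{1}(\Omega)$. By the norm equivalence from the first step, it is then dense with respect to $\| \cdot \|_{\tilde{a}(A,B)}$.

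The only genuinely nontrivial ingredient is the $H^{1}$-density of smooth-up-to-the-boundary functions, which requires the Lipschitz regularity of $\partial\Omega$. I would simply cite it, for instance from Evans--Gariepy or Grisvard, rather than reprove it. Everything else is a direct consequence of estimates already established in Lemma \ref{form_properties} and (\ref{H1_inequ}).
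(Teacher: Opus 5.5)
Your proposal is correct and follows the same route as the paper: it cites the $H^{1}$-density of $C^{1}(\overline{\Omega})$ on bounded Lipschitz domains and uses the $H^{1}$-continuity estimate from Lemma \ref{form_properties} to show that $H^{1}$-convergence implies convergence in $\|\cdot\|_{\tilde{a}(A,B)}$. Only this upper bound is needed for density; the lower bound (\ref{H1_inequ}), which gives you full norm equivalence, is harmless but superfluous.
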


\begin{proof}
The space $C^{1}(\overline\Omega)$ is dense in $H^{1}(\Omega)$. Moreover, the estimate
$$
\left| \tilde{a}(A, B)(u,v) \right|  \leq 
K \| u \|_{H^{1}\left( \Omega \right)} \| v \|_{H^{1}\left( \Omega \right)} 
+ \lambda_{0}  \| u \|_{\mathrm{L}^{2}\left( \Omega \right)} \| v \|_{\mathrm{L}^{2}\left( \Omega \right)}
$$
shows that convergence in $H^{1}(\Omega)$ implies convergence with
respect to $\|\cdot\|_{\tilde{a}(A,B)}$.\\ 
This proves the assertion.\\
\end{proof}

\begin{dfn}
We define the operator associated with the form $\tilde{a}(A,B)$ as follows.
\begin{align*}
& D ( \tilde{L}(A, B) ) = & \\
& \left\{ u \in D \left( \tilde{a}(A, B) \right) \ | \ \exists v \in \mathrm{L}^{2}\left( \Omega \right) : 
\tilde{a}(A, B) (u, \varphi) = \langle v, \varphi \rangle_{\mathrm{L}^{2}\left( \Omega \right)} \ \text{ for every } 
\varphi \in D \left( \tilde{a}(A, B) \right) \right\} & 
\end{align*}
as the domain of 
$\tilde{L}(A, B) \colon D ( \tilde{L}(A, B) ) \subset \mathrm{L}^{2}\left( \Omega \right) \to \mathrm{L}^{2}\left( \Omega \right)$ 
which is characterized by the equation
$$
\tilde{a}(A, B)(u, \varphi) =  \langle \tilde{L}(A, B)u, \varphi \rangle_{\mathrm{L}^{2}\left( \Omega \right)}
$$
for $u \in D ( \tilde{L}(A, B) )$ and every $\varphi \in D \left( \tilde{a}(A, B) \right)$. 
\end{dfn}

Since $\tilde{a}(A,B)$ is densely defined, accretive,
continuous and closed, the standard form method implies that
$-\tilde{L}(A,B)$ generates a $C_{0}$-semigroup of contractions
$( \mathrm{e}^{-t\tilde{L}(A, B)} )_{t \geq 0}$ on $\mathrm{L}^{2}\left( \Omega \right)$. 
Let us focus on $a(A, B)$ next.

\begin{prp}\label{Semigroup_adjustment}
Define the operator $L(A, B) = \tilde{L}(A, B) - \lambda_{0}$ in $\mathrm{L}^{2}\left( \Omega \right)$ on the domain 
$D \left( L(A, B) \right) = D ( \tilde{L}(A, B) )$. Then $L(A, B)$ satisfies the following properties:
\begin{enumerate}[i.)]
\item $L(A, B)$ is associated with the form $a(A, B)$, i.e.
$$
a(A, B)(u, \varphi) =  \langle L(A, B)u, \varphi \rangle_{\mathrm{L}^{2}\left( \Omega \right)}
$$
holds for every $u \in D ( L(A, B) )$ and every $\varphi \in D \left( a(A, B) \right)$.
\item $-L(A, B)$ generates the $C_{0}$-semigroup 
$\mathrm{e}^{t\lambda_{0}}\mathrm{e}^{-t\tilde{L}(A, B)}$
on $\mathrm{L}^{2}\left( \Omega \right)$. We write $\mathrm{e}^{-tL(A, B)}$ for 
$\mathrm{e}^{t\lambda_{0}}\mathrm{e}^{-t\tilde{L}(A, B)}$ at every time $t \geq 0$.
\end{enumerate}
\end{prp}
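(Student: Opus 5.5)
Both assertions reduce to simple identities relating $a(A,B)$ and $\tilde{a}(A,B)$, and to the standard fact that a bounded (scalar) perturbation of a generator is again a generator.

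For part i.), I will use the identity
$$
\tilde{a}(A,B)(u,\varphi) \ = \ a(A,B)(u,\varphi) + \lambda_{0}\langle u,\varphi\rangle_{\mathrm{L}^{2}(\Omega)},
$$
valid for all $u,\varphi \in H^{1}(\Omega)$. This follows directly from the two definitions, since both forms have the same domain. Now take $u \in D(L(A,B)) = D(\tilde{L}(A,B))$. The defining property of $\tilde{L}(A,B)$ gives
$$
\tilde{a}(A,B)(u,\varphi) = \langle \tilde{L}(A,B)u,\varphi\rangle
$$
for every $\varphi \in H^{1}(\Omega)$. Subtracting $\lambda_{0}\langle u,\varphi\rangle$ from both sides yields
$$
a(A,B)(u,\varphi) = \langle (\tilde{L}(A,B)-\lambda_{0})u,\varphi\rangle = \langle L(A,B)u,\varphi\rangle .
$$
It is also worth noting the converse. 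If $u \in H^{1}(\Omega)$ and there is $w \in \mathrm{L}^{2}(\Omega)$ with $a(A,B)(u,\varphi) = \langle w,\varphi\rangle$ for all $\varphi$, then $u \in D(\tilde{L}(A,B))$ with $\tilde{L}(A,B)u = w + \lambda_{0}u$. So $L(A,B)$ is exactly the operator associated with $a(A,B)$ in the usual sense, and its domain is the natural one.

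For part ii.), I will start from the fact, already noted after the definition of $\tilde{L}(A,B)$, that $-\tilde{L}(A,B)$ generates a contraction $C_{0}$-semigroup $T(t) = \mathrm{e}^{-t\tilde{L}(A,B)}$. Set
$$
S(t) = \mathrm{e}^{t\lambda_{0}}T(t).
$$
Then $S(0) = I$. The semigroup law holds because
$$
S(t+s) = \mathrm{e}^{(t+s)\lambda_{0}}T(t)T(s) = S(t)S(s).
$$
Strong continuity follows from that of $T$ together with the continuity of $t \mapsto \mathrm{e}^{t\lambda_{0}}$.

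It remains to compute the generator of $S$. For $u \in \mathrm{L}^{2}(\Omega)$ I write
$$
\frac{S(h)u-u}{h} \ = \ \mathrm{e}^{h\lambda_{0}}\,\frac{T(h)u-u}{h} \ + \ \frac{\mathrm{e}^{h\lambda_{0}}-1}{h}\,u .
$$
As $h \to 0^{+}$ the second term converges to $\lambda_{0}u$, and the factor $\mathrm{e}^{h\lambda_{0}}$ tends to $1$. Hence the left-hand side converges if and only if $(T(h)u-u)/h$ converges, that is, if and only if $u \in D(\tilde{L}(A,B))$. In that case the limit is
$$
-\tilde{L}(A,B)u + \lambda_{0}u = -L(A,B)u.
$$
Therefore the generator of $S$ is $-L(A,B)$ with domain $D(\tilde{L}(A,B))$. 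This is the standard rescaling lemma for $C_{0}$-semigroups.

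There is no genuine obstacle here. The only point requiring some care is the exact equality of the domains in the generator computation, and the rescaling argument above settles it directly.
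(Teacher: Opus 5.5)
Your proof is correct and is exactly the routine verification the paper means when it says both assertions ``follow directly from the definitions''. The paper omits the proof, so you follow the same route: the identity $\tilde{a}(A,B)=a(A,B)+\lambda_{0}\langle\cdot,\cdot\rangle$ for i.), and the standard rescaling lemma for ii.). Your converse remark in i.) is a useful addition, since it shows that $L(A,B)$ is determined by $a(A,B)$ alone and hence does not depend on the chosen shift. The paper tacitly relies on this when it later replaces $\lambda_{0}$ by the common shift $\lambda$.
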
 

Both assertions follow directly from the definitions, and we omit the proof.\\

\subsection{Domination of $\mathrm{e}^{-tL(A, B)}$}

Assume in addition that there exists a positive operator $C \in \mathcal{L} \left( \mathrm{L}^{2}( \partial \Omega ) \right)$
such that
\begin{equation}
\left| B u \right| \ \leq \ C \left| u \right|
\end{equation}
for every $u \in \mathrm{L}^{2}( \partial \Omega )$. Using the notation introduced in
Subsection \ref{form_a}, we denote by 
$\tilde{a}(A, -C)$ and $a(A, -C)$ the forms obtained by replacing
$B$ with $-C$. Similarly, we denote the associated operators by $\tilde{L}(A, -C)$ and $L(A, -C)$ 
and their semigroups by $( \mathrm{e}^{-t\tilde{L}(A, -C)} )_{t \geq 0}$ and $( \mathrm{e}^{-tL(A, -C)} )_{t \geq 0}$.\\

\begin{rem}\label{choice_lambda}
We denote the shift $\lambda_{0}$ chosen in Subsection \ref{form_a} for
$\tilde{a}(A,B)$ by $\lambda_{B}$. Similarly, we choose
$\lambda_{C}>0$ sufficiently large for $\tilde{a}(A,-C)$.
In the following, both auxiliary forms and their associated operators
are understood with the common shift
$$
\lambda=\lambda_{B}+\lambda_{C}.
$$
Since $\lambda\geq\lambda_{B}$ and $\lambda\geq\lambda_{C}$,
both auxiliary forms are accretive with this choice.\\
\end{rem}

\begin{lem}\label{inequ_semigroup_B}
The auxiliary operator semigroup $\mathrm{e}^{-t\tilde{L}(A, B)}$ is dominated by the auxiliary operator semigroup
 $\mathrm{e}^{-t\tilde{L}(A,-C)}$, i.e.
$$
\left| \mathrm{e}^{-t\tilde{L}(A, B)}u \right| \ \leq \ \mathrm{e}^{-t\tilde{L}(A,-C)} \left| u \right|
$$ 
holds pointwise almost everywhere in $\Omega$ for every $u \in   \mathrm{L}^{2}( \Omega )$. 
Furthermore 
$$
\left| \mathrm{e}^{-tL(A, B)}u \right| \ \leq \ \mathrm{e}^{-tL(A,-C)} \left| u \right|
$$
is a direct implication where $\lambda$ is chosen as described in Remark \ref{choice_lambda}.
\end{lem}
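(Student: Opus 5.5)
We will derive the domination from the Ouhabaz characterisation of domination of semigroups generated by forms, in the version for non-symmetric forms (see e.g. Ouhabaz' monograph, Theorem 2.21, or Manavi--Vogt--Voigt). First we check that the dominating semigroup $\mathrm{e}^{-t\tilde{L}(A,-C)}$ is positive. For this we use Ouhabaz' positivity criterion for the closed accretive form $\tilde a(A,-C)$ with domain $H^{1}(\Omega)$. Since $\tilde a(A,-C)$ has real coefficients, it suffices to show two things. First, $u\in H^{1}(\Omega)$ real implies $u^{+}\in H^{1}(\Omega)$; this is standard. Second, $\tilde a(A,-C)(u^{+},u^{-})\le 0$ for real $u$. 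The gradient term $\int_\Omega A\nabla u^{+}\cdot\nabla u^{-}\d x$ vanishes because $\nabla u^{+}$ and $\nabla u^{-}$ have disjoint supports. The same holds for $\lambda\int_\Omega u^{+}u^{-}\d x$. The boundary term is
$$
-\int_{\partial\Omega} C\gamma(u^{+})\,\gamma(u^{-})\d\sigma \ \le \ 0,
$$
because $\gamma(u^{\pm})=\gamma(u)^{\pm}\ge 0$ and $C$ is positive. We also need the criterion that $\tilde a(A,-C)$ is real, i.e. it maps real functions to real values, which holds since $A$ is real and $C$, being positive, maps real functions to real functions.

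Next we verify the two conditions of the domination theorem, with the common shift $\lambda$ of Remark \ref{choice_lambda}. The first is the ideal property. For $u\in H^{1}(\Omega)$ and $v\in H^{1}(\Omega)$ with $|v|\le |u|$, the function $u\,\mathrm{sgn}\,\bar v$ belongs to $H^{1}(\Omega)$; also $|u|\in H^{1}(\Omega)$. Both facts are standard for $H^{1}$ on Lipschitz domains, by the chain rule for $|u|$ and the usual regularised-sign arguments. Hence $D(\tilde a(A,B))=H^{1}(\Omega)$ is an ideal of $D(\tilde a(A,-C))=H^{1}(\Omega)$. The second condition is the inequality
$$
\Re\,\tilde a(A,B)(u,v)\ \ge\ \tilde a(A,-C)(|u|,|v|)
$$
for $u,v\in H^{1}(\Omega)$ with $u\bar v\ge 0$. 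For such pairs one has, a.e. on $\{u\neq0\}\cap\{v\ne 0\}$, $v=|v|\,\mathrm{sgn}\,u$. The standard computation gives
$$
\Re A\nabla u\cdot\overline{\nabla v}\ \ge\ A\nabla|u|\cdot\nabla|v|.
$$
This is the step where the structure of the principal part matters. For real matrices $A$ (possibly non-symmetric) we follow the known proof for the Neumann/Robin case, writing $\nabla u=\mathrm{sgn}\,u\,(\nabla|u|+i|u|\nabla\theta)$ locally. In that form the cross terms cancel because $v$ shares the phase of $u$. The $\lambda$-terms agree, since $\Re u\bar v=|u||v|$. For the boundary, $\gamma$ commutes with $|\cdot|$ and with the phase relation, so $\gamma(u)\overline{\gamma(v)}\ge0$. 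Then
$$
\Re\int_{\partial\Omega}B\gamma(u)\overline{\gamma(v)}\d\sigma\ \ge\ -\int_{\partial\Omega}|B\gamma(u)|\,|\gamma(v)|\d\sigma\ \ge\ -\int_{\partial\Omega}C|\gamma(u)|\,|\gamma(v)|\d\sigma,
$$
which is exactly the boundary term of $\tilde a(A,-C)(|u|,|v|)$.

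The theorem then yields $|\mathrm{e}^{-t\tilde L(A,B)}u|\le \mathrm{e}^{-t\tilde L(A,-C)}|u|$. Multiplying both sides by $\mathrm{e}^{t\lambda}$ and using Proposition \ref{Semigroup_adjustment} with the same shift $\lambda$ gives the second assertion. The main obstacle is the pointwise gradient inequality for pairs $u\bar v\ge 0$ with a non-symmetric real $A$. If the phase decomposition is delicate, I would instead approximate $u,v$ by truncations and regularised signs $u/\sqrt{|u|^{2}+\varepsilon^{2}}$, prove the inequality for these, and pass to the limit. Alternatively, I would cite the domination criterion in the slightly weaker form requiring the inequality only for $v=|v|\,\mathrm{sgn}\,u$ with $v$ in the ideal, as in Ouhabaz' Theorem 2.21.
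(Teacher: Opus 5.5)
Your argument is correct, apart from one misstated condition noted below. It has the same skeleton as the paper's proof. You get positivity of $\mathrm{e}^{-t\tilde L(A,-C)}$ from the first Beurling--Deny criterion. You then apply Ouhabaz's Theorem 2.21, estimating the boundary term exactly as the paper does via $|B\gamma(u)|\le C|\gamma(u)|$ and $\gamma(|u|)=|\gamma(u)|$. Finally you multiply by $\mathrm{e}^{t\lambda}$.

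The genuine difference is how you obtain the two interior ingredients of Theorem 2.21: that $H^1(\Omega)$ is an ideal in itself, and that $\Re A\nabla u\cdot\overline{\nabla v}\ge A\nabla|u|\cdot\nabla|v|$ for $u\bar v\ge0$. You prove these by hand, using the chain rule, regularised signs and a phase decomposition. The paper gets both without any pointwise calculus. The Neumann semigroup $\mathrm{e}^{-t\tilde L(A,0)}$ is positive and hence dominates itself. Applying the direction ``domination implies the form conditions'' of Theorem 2.21 (and Proposition 2.20) with both forms equal to $\tilde a(A,0)$ then yields the ideal property and $\tilde a(A,0)(|u|,|v|)\le\Re\tilde a(A,0)(u,v)$. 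The paper's route is shorter and purely abstract. Yours is self-contained and shows exactly where the realness of $A$ enters. You also verify the Beurling--Deny condition directly for all real $u\in H^1(\Omega)$, rather than passing through the core $C^1(\overline\Omega)$.

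To make your key step rigorous you do not need a phase function $\theta$. From $\Im(\bar u v)=0$ one gets $\Im(\bar u\nabla v)=\Im(\bar v\nabla u)$. Set $\eta=\Im(\overline{\mathrm{sgn}\,u}\,\nabla u)/|u|$. Then $\overline{\mathrm{sgn}\,u}\,\nabla u=\nabla|u|+i|u|\eta$ and $\overline{\mathrm{sgn}\,u}\,\nabla v=\nabla|v|+i|v|\eta$ a.e.\ on $\{u\neq0\}\cap\{v\neq0\}$. Hence $\Re A\nabla u\cdot\overline{\nabla v}=A\nabla|u|\cdot\nabla|v|+|u||v|\,A\eta\cdot\eta$. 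The cross terms do not cancel; they are purely imaginary because $A$ is real. The inequality comes from the nonnegative last term.

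Finally, the ideal condition should read $v\,\mathrm{sgn}\,u\in H^1(\Omega)$ for $u,v\in H^1(\Omega)$ with $|v|\le|u|$. As you wrote it, $u\,\mathrm{sgn}\,\bar v$ need not belong to $H^1(\Omega)$: take $u={\bf 1}$ and a real $v$ that changes sign.
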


\begin{proof}
We apply Ouhabaz's domination criterion \cite[Theorem 2.21, p. 60]{Ouhabaz05}. 
We first show that the semigroup $( \mathrm{e}^{-t\tilde{L}(A,-C)} )_{t \geq 0}$ is positive.\\
\begin{enumerate}[i)]
\item Let $u \in C^{1}( \overline{\Omega} )$. We identify $\gamma(u)$ with the restriction of $u$ to $\partial \Omega$.
Since $\nabla(\Re u)^{+}$ and $\nabla(\Re u)^{-}$ have disjoint supports, we have 
\begin{align*} 
& \tilde{a}(A,-C) \left(  (\Re u)^{+},  (\Re u)^{-} \right) = - \int_{\partial \Omega} C (\Re u)^{+} \ (\Re u)^{-} \d \sigma(x)
& \\
& = - \int_{\partial \Omega} \underbrace{C (\Re u)^{+}}_{\geq 0} \ \underbrace{(\Re u)^{-}}_{\geq 0} \d \sigma(x) \leq 0. &
\end{align*}
It follows from \cite[Theorem 2.6, p. 50]{Ouhabaz05} that 
$\mathrm{e}^{-t\tilde{L}(A,-C)}$ is positive for every $t \geq 0$
since $C^{1}( \overline{\Omega} )$ is a core of $\tilde{a}(A,-C)$. 

\item Notice that $H^{1}(\Omega)$ is an ideal in itself by Proposition 2.20 on page 59 in \cite{Ouhabaz05} since
$\mathrm{e}^{-t\tilde{L}(A,-C)}$ are positive. Next let $u,v \in H^{1}(\Omega)$ be such that $u\overline{v} \geq 0$. 
We claim that
$$
\tilde{a}(A,-C)(|u|, |v|) \ \leq \ \Re \tilde{a}(A, B)(u, v).
$$
Since the trace operator commutes with the modulus, we have
$$
\gamma(|u|)=|\gamma(u)| \quad\text{and}\quad \gamma(|v|)=|\gamma(v)|.
$$
For the boundary terms, we obtain
\begin{align*}
- \Re \langle B\gamma(u), \gamma(v) \rangle_{\mathrm{L}^{2}( \partial \Omega )} & 
\leq \left| \langle B\gamma(u), \gamma(v) \rangle_{\mathrm{L}^{2}( \partial \Omega )} \right| 
\leq \langle \left| B\gamma(u) \right|, |\gamma(v)| \rangle_{\mathrm{L}^{2}( \partial \Omega )} & \\
\ \\
& \leq \langle C \left| \gamma(u) \right|, |\gamma(v)| \rangle_{\mathrm{L}^{2}( \partial \Omega )} & 
\end{align*}
which implies
$\langle -C \left| \gamma(u) \right|, \left| \gamma(v) \right|  \rangle_{\mathrm{L}^{2}( \partial \Omega )} 
\leq \Re \langle B \gamma(u), \gamma(v) \rangle_{\mathrm{L}^{2}( \partial \Omega )}$. \\
It remains to compare the remaining parts of the forms. Note that $\mathrm{e}^{-t\tilde{L}(A, 0)}$ are positive operators
which implies
$$
\left| \mathrm{e}^{-t\tilde{L}(A, 0)}w \right| \leq \mathrm{e}^{-t\tilde{L}(A, 0)} \left| w \right|
$$
for every $w \in \mathrm{L}^{2}(\Omega)$. Theorem 2.21 on page 60 in \cite{Ouhabaz05} yields
$$
\tilde{a}(A, 0)(|u|, |v|) \ \leq \ \Re \tilde{a}(A, 0)(u, v).
$$
Combining these estimates, we obtain
$$
\tilde{a}(A, -C)(|u|, |v|) \ \leq \ \Re \tilde{a}(A, B)(u, v).
$$
A second application of Theorem 2.21 proves the assertion.
\end{enumerate}
\end{proof}

\section{Main theorem}\label{section_main_theorems}

Let $d \in \N$ with $d>2$ and let
$\Omega \subset \R^{d}$ be a bounded Lipschitz domain.
Let $A \colon \Omega \to \R^{d\times d}$ be a matrix-valued
function whose coefficients $a_{ij} \colon \Omega \to \R$ are
bounded and measurable. We assume that $A$ is uniformly elliptic,
that is, there exists a constant $\alpha>0$ such that
$$ 
\Re \left(  \left( A(x) \xi \right)^{T} \overline{\xi} \right) \geq \alpha \left| \xi \right|^{2}. 
$$  
for every $\xi \in \C^{d}$ and almost every $x \in \Omega$.\\
\ \\
Let the non-local Robin boundary condition be described by an
operator $B \in \mathcal{L}\left( \mathrm{L}^{2}\left( \partial \Omega \right) \right)$.
Suppose there exists a positive operator $C \in \mathcal{L} \left( \mathrm{L}^{2}( \partial \Omega ) \right)$ 
satisfying
\begin{enumerate}[i)]
\item $\left| B u \right| \ \leq \ C \left| u \right|$ for every $u \in \mathrm{L}^{2}( \partial \Omega )$ and
\item $C{\bf 1} \in \mathrm{L}^{\infty}(\partial \Omega)$ where ${\bf 1}$ denotes the constant function 
with value $1$ on $\partial\Omega$.\\
\end{enumerate}

\begin{thm}\label{Main_result}
Under these assumptions the semigroup 
$(\mathrm{e}^{-tL(A, B)})_{t \geq 0}$ is ultracontractive. More precisely, there exist constants 
$K>0$ and $\lambda > 0$ such that
$$ 
\left\| \mathrm{e}^{-tL(A, B)} u \right\|_{\mathrm{L}^{\infty} \left( \Omega \right) } \ \leq \ 
K t^{-\frac{d}{4}} \ \exp\left(t (\lambda + \frac{\alpha}{2}) \right) \
\left\| u \right\|_{\mathrm{L}^{2} \left( \Omega \right)}
$$
for every time $t > 0$ and every $u \in \mathrm{L}^{2}\left( \Omega \right)$.\\
\end{thm}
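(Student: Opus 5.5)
The proof follows the strategy from the introduction: reduce everything to the positive comparison semigroup $\mathrm{e}^{-tL(A,-C)}$ via Lemma \ref{inequ_semigroup_B}, get $\mathrm{L}^\infty$-boundedness from a bounded and bounded-below eigenfunction, and then get the $t^{-d/4}$ order from Nash's inequality and duality. Work with the common shift $\lambda$ of Remark \ref{choice_lambda}.

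\textbf{Step 1 (the eigenfunction).} Show that there are $\phi\in H^{1}(\Omega)$, $\mu\in\R$ and constants $\delta,M>0$ with $L(A,-C)^{\ast}\phi=\mu\phi$ (or $L(A,-C)\phi=\mu\phi$, whichever the duality needs) and $\delta\le\phi\le M$ a.e.
\begin{itemize}
\item \emph{Existence.} The resolvent $(\tilde L(A,-C))^{-1}$ is positive and compact, since $H^{1}(\Omega)\hookrightarrow \mathrm{L}^{2}(\Omega)$ compactly on a Lipschitz domain. Krein--Rutman then gives a positive eigenfunction for the spectral radius.
\item \emph{Upper bound.} Test the eigen-equation with power truncations $\min(\phi,k)^{p}$. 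The boundary term $\langle C\gamma\phi,\gamma(\phi^{p})\rangle$ is controlled using $C{\bf 1}\in \mathrm{L}^\infty$ and Lemma \ref{gamma}. The Sobolev embedding $H^{1}\hookrightarrow \mathrm{L}^{2d/(d-2)}$ (here $d>2$ is used) then lets one iterate exponents by the factor $d/(d-2)$ up to $\mathrm{L}^\infty$.
\item \emph{Lower bound.} Compare with the Neumann semigroup: since $C\ge0$, $\mathrm{e}^{-tL(A,-C)}\ge \mathrm{e}^{-tL(A,0)}$ on positive functions. Hence $\mathrm{e}^{\mu t}\phi\ge \mathrm{e}^{-tL(A,0)}\phi$. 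The Neumann semigroup converges to the mean value of $\phi$, which is positive, and this yields $\phi\ge\delta$. This needs an $\mathrm{L}^\infty$-type convergence or a pointwise argument, not just convergence in $\mathrm{L}^{2}$.
\end{itemize}
I expect this step to be the main obstacle. The hardest parts are the Moser-type iteration with the non-local boundary term, and making the lower bound pointwise rather than merely $\mathrm{L}^2$.

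\textbf{Step 2 ($\mathrm{L}^\infty$-bound).} For $0\le u\le {\bf 1}$ we have $u\le \delta^{-1}\phi$. Positivity gives
$$
\mathrm{e}^{-tL(A,-C)}u\le \delta^{-1}\mathrm{e}^{-tL(A,-C)}\phi=\delta^{-1}\mathrm{e}^{-\mu t}\phi\le \tfrac{M}{\delta}\mathrm{e}^{|\mu|t}.
$$
So $\|\mathrm{e}^{-tL(A,-C)}\|_{\infty\to\infty}\le \tfrac{M}{\delta}\mathrm{e}^{\omega t}$, and this requires $\phi$ to be an eigenfunction of $L(A,-C)$ itself. Lemma \ref{inequ_semigroup_B} transfers the bound to $\mathrm{e}^{-tL(A,B)}$. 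By duality, the adjoint semigroup $\mathrm{e}^{-tL(A,B)^{\ast}}$, which is associated with the adjoint form, is bounded on $\mathrm{L}^1$ with the same bound.

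\textbf{Step 3 (Nash and duality).} Let $v(t)=\mathrm{e}^{-t(\tilde L(A,B)^{\ast})}f$ with $\|f\|_{1}=1$. Estimate \eqref{tilde_a_accretive} applies equally to the adjoint form and gives
$$
-\tfrac{d}{dt}\|v\|_2^2=2\Re\tilde a(A,B)(v,v)\ge\alpha\|\nabla v\|_2^2 .
$$
Nash's inequality on Lipschitz domains,
$$
\|v\|_2^{2+4/d}\le c(\|\nabla v\|_2^2+\|v\|_2^2)\|v\|_1^{4/d},
$$
together with the $\mathrm{L}^1$-bound from Step 2 on $\|v(t)\|_1$, gives a differential inequality. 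Absorbing the $\|v\|_2^2$ term and the exponential growth by an extra shift, this yields
$$
\|v(t)\|_2\le K t^{-d/4}\mathrm{e}^{t(\lambda+\alpha/2)}\|f\|_1 .
$$
Dualizing this $\mathrm{L}^1\to \mathrm{L}^2$ bound for the adjoint gives the asserted $\mathrm{L}^2\to \mathrm{L}^\infty$ bound for $\mathrm{e}^{-tL(A,B)}=\mathrm{e}^{\lambda t}\mathrm{e}^{-t\tilde L(A,B)}$, after collecting the exponential factors into $\exp(t(\lambda+\alpha/2))$.
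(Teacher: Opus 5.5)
Your overall architecture is the paper's. Steps 2 and 3 match it essentially line by line: $|u|\le\delta^{-1}\|u\|_\infty\phi$, domination, duality to an $\mathrm{L}^1$-bound for the adjoint, Nash's inequality for $\mathrm{e}^{-\alpha t/2}\mathrm{e}^{-t\tilde L(A,B)^\ast}$ with the shift chosen so that $\mu+\lambda+\alpha/2\ge0$, then a second duality. The gaps are all in Step 1, which carries the real content.

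\emph{Upper bound.} As described, your iteration does not close. $C$ is non-local, and only $C{\bf 1}$ (not $C^\ast{\bf 1}$) is bounded. So truncating the test function does not truncate what sits inside $C$. With $h=\gamma(\phi)$ and any truncated power as test function (e.g.\ $\phi\,\min(\phi,n)^{p-2}$), the boundary term is $\langle Ch,h\,(h\wedge n)^{p-2}\rangle$, with the untruncated $h$ inside $Ch$. The hypothesis $C{\bf 1}\in\mathrm{L}^\infty$ enters through Riesz--Thorin, $\|C\|_{\mathrm{L}^p\to\mathrm{L}^p}\le\max\{\|C\|_{\mathrm{L}^2\to\mathrm{L}^2},\|C{\bf 1}\|_\infty\}$. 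H\"older then gives the $n$-uniform bound $M_C\|h\|_{\mathrm{L}^p(\partial\Omega)}^p$. This bound is useless unless $\gamma(\phi)\in\mathrm{L}^p(\partial\Omega)$ is known \emph{before} the step. Lemma \ref{gamma} only helps afterwards, to absorb $\|\gamma(\phi^{p/2})\|_2^2$ once $\phi^{p/2}\in H^1(\Omega)$ is established.

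Hence the boundary integrability must be propagated in parallel, via the trace inequality $H^1(\Omega)\hookrightarrow\mathrm{L}^{2(d-1)/(d-2)}(\partial\Omega)$ applied to $\phi^{p/2}$. This upgrades $\gamma(\phi)$ only from $\mathrm{L}^p$ to $\mathrm{L}^{p(d-1)/(d-2)}$. With your factor $d/(d-2)$ you would have $\phi\in\mathrm{L}^{pd/(d-2)}(\Omega)$ but not $\gamma(\phi)\in\mathrm{L}^{pd/(d-2)}(\partial\Omega)$, so the next step cannot start. The paper iterates with $\kappa=(d-1)/(d-2)$ for both the interior and the trace (Lemma \ref{iteration} and Corollary \ref{trace_iteration}). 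Since $\kappa>1$, the product $\prod_j(\tilde Kp_j^2)^{1/p_j}$ still converges.

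\emph{Lower bound and existence.} You correctly note that $\mathrm{L}^2$-convergence of the Neumann semigroup is not enough, but you leave the gap open. The missing idea is to factor through the ultracontractive Neumann semigroup. Because $\mathrm{e}^{-sL(A,0)}P=P$, one has $\mathrm{e}^{-tL(A,0)}-P=\mathrm{e}^{-s_0L(A,0)}\bigl(\mathrm{e}^{-(t-s_0)L(A,0)}-P\bigr)$. Moreover, $\|\mathrm{e}^{-s_0L(A,0)}\|_{\mathrm{L}^2\to\mathrm{L}^\infty}<\infty$ by Nash's inequality and submarkovianity of the Neumann semigroup and its adjoint. Combine this with $\|\mathrm{e}^{-tL(A,0)}-P\|_{\mathrm{L}^2\to\mathrm{L}^2}\to0$, which follows from irreducibility, compact resolvent, holomorphy and $L(A,0){\bf 1}=L(A,0)^\ast{\bf 1}=0$. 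This gives $\mathrm{e}^{-t_0L(A,0)}\phi\ge m_\phi/2$ a.e.\ for large $t_0$. Then $\mathrm{e}^{-t_0\mu}\phi=\mathrm{e}^{-t_0L(A,-C)}\phi\ge\mathrm{e}^{-t_0L(A,0)}\phi$ yields $\delta=\mathrm{e}^{t_0\mu}m_\phi/2$.

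Finally, Krein--Rutman requires a positive spectral radius, which you do not check. It follows from $0\le(\lambda+L(A,0))^{-1}\le(\lambda+L(A,-C))^{-1}$ together with $(\lambda+L(A,0))^{-1}{\bf 1}=\lambda^{-1}{\bf 1}$.
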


\begin{cor}\label{Main_cor}
Let $B$ be defined as
$$
Bu (x) \ = \ \int_{\partial \Omega} k(x,y) u(y) \d \sigma(y)
$$
for $u \in \mathrm{L}^{2}\left( \partial \Omega \right)$ and 
$k \in \mathrm{L}^{2} \left( \partial \Omega \times \partial \Omega \right)$ satisfying
$$
\mathop{\operatorname{ess\,sup}}_{x\in\partial\Omega} \int_{\partial \Omega} \left| k(x,y) \right| \d \sigma(y) < \infty.
$$
Then $(\mathrm{e}^{-tL(A, B)})_{t \geq 0}$ is ultracontractive in $\mathrm{L}^{2}\left( \Omega \right)$ at any time $t > 0$.\\
\end{cor}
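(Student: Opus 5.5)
The statement is Corollary \ref{Main_cor}, so the plan is to deduce it from Theorem \ref{Main_result}. I only have to produce a positive operator $C \in \mathcal{L}(\mathrm{L}^{2}(\partial\Omega))$ satisfying conditions i) and ii) of Section \ref{section_main_theorems}. The natural candidate is the integral operator with kernel $|k|$:
$$
Cu(x) \ = \ \int_{\partial\Omega} |k(x,y)|\, u(y) \d\sigma(y), \qquad u \in \mathrm{L}^{2}(\partial\Omega).
$$

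The first step is to check that $C$ is well defined and bounded. Since $k \in \mathrm{L}^{2}(\partial\Omega\times\partial\Omega)$, also $|k| \in \mathrm{L}^{2}(\partial\Omega\times\partial\Omega)$ with the same norm. Hence $C$ is a Hilbert--Schmidt operator with $\|C\|_{\mathrm{L}^{2}\to\mathrm{L}^{2}} \leq \|k\|_{\mathrm{L}^{2}(\partial\Omega\times\partial\Omega)}$. The same argument shows $B \in \mathcal{L}(\mathrm{L}^{2}(\partial\Omega))$, so $B$ fits the setting of the paper. Positivity of $C$ is immediate because its kernel is nonnegative.

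The second step is the domination i). For $u \in \mathrm{L}^{2}(\partial\Omega)$, Fubini and Cauchy--Schwarz give $\int |k(x,y)|\,|u(y)| \d\sigma(y) < \infty$ for a.e.\ $x$. Therefore
$$
|Bu(x)| \ \leq \ \int_{\partial\Omega} |k(x,y)|\,|u(y)| \d\sigma(y) \ = \ C|u|(x)
$$
for almost every $x \in \partial\Omega$.

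The third step is ii). Since $\sigma(\partial\Omega) < \infty$, we have ${\bf 1} \in \mathrm{L}^{2}(\partial\Omega)$, and
$$
C{\bf 1}(x) \ = \ \int_{\partial\Omega} |k(x,y)| \d\sigma(y).
$$
By the hypothesis on $k$, this is essentially bounded.

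With i) and ii) in hand, Theorem \ref{Main_result} applies under its standing assumption $d>2$. It yields the bound $Kt^{-d/4}\mathrm{e}^{t(\lambda+\alpha/2)}\|u\|_{\mathrm{L}^{2}(\Omega)}$ on $\|\mathrm{e}^{-tL(A,B)}u\|_{\mathrm{L}^{\infty}(\Omega)}$. Hence $\mathrm{e}^{-tL(A,B)} \in \mathcal{L}(\mathrm{L}^{2}(\Omega),\mathrm{L}^{\infty}(\Omega))$ for every $t>0$, which is the claim.

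None of these steps is a real obstacle. The only point needing care is the measure-theoretic justification that $C$ maps into $\mathrm{L}^{2}(\partial\Omega)$ and that the pointwise inequality holds almost everywhere. Both are settled by the Hilbert--Schmidt estimate and Fubini's theorem.
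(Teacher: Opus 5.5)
Your proposal is correct and follows essentially the same route as the paper. The paper also takes the integral operator with kernel $|k|$ as the dominating positive operator $C$, checks boundedness from $k \in \mathrm{L}^{2}(\partial\Omega\times\partial\Omega)$, the domination $|Bu| \leq C|u|$, and $C{\bf 1}(x) = \int_{\partial\Omega}|k(x,y)|\d\sigma(y) \in \mathrm{L}^{\infty}(\partial\Omega)$, and then applies Theorem \ref{Main_result}. You spell out the Fubini and Cauchy--Schwarz details more carefully than the paper does.
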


\begin{proof}
Define the positive operator $C$ by
$$
(Cu)(x) \ = \ \int_{\partial \Omega} |k(x,y)| u(y) \d \sigma(y)
$$ 
for $u \in \mathrm{L}^{2}\left( \partial \Omega \right)$. Since 
$k \in \mathrm{L}^{2} \left( \partial \Omega \times \partial \Omega \right)$,
$C$ is bounded on $\mathrm{L}^{2}(\partial \Omega)$. Moreover $|Bu|\leq C|u|$ and
$$
(C{\bf 1})(x) \ = \ \int_{\partial\Omega}|k(x,y)| \d\sigma(y).
$$
The assumed bound therefore implies that $C{\bf 1} \in L^{\infty}(\partial\Omega)$. 
The assertion now follows from Theorem \ref{Main_result}.\\
\end{proof}

\section{Comparison with the result of Glück and Mui}\label{comparison_Glueck_Mui}

\label{sec:comparison-glueck-mui}

The ultracontractivity result of Jochen Glück and Jonathan Mui
in \cite[Theorem 3.2]{Glueck_Mui_2026} is proved under the assumption that
the boundary operator
$$
B \in \mathcal{L} \left(\mathrm{L}^2( \partial\Omega )\right)
$$
acts continuously on both $\mathrm{L}^{1}( \partial \Omega )$ and
$\mathrm{L}^{\infty}( \partial \Omega )$. Our assumption is different. 
We require the existence of a positive operator
$C \in \mathcal{L} \left(\mathrm{L}^2( \partial\Omega )\right)$ with
\begin{enumerate}[i)]
\item $|Bu| \ \leq \ C|u|$ for every $u \in \mathrm{L}^2( \partial\Omega )$ and

\item $C{\bf 1} \in \mathrm{L}^{\infty} ( \partial\Omega )$.\\
\end{enumerate}
In this section we compare these two conditions. First note that our assumption 
implies that $B$ acts continuously on $\mathrm{L}^{\infty} ( \partial\Omega )$. 
Indeed, for $u \in \mathrm{L}^{\infty}( \partial\Omega ) \subset \mathrm{L}^2( \partial\Omega )$ 
we have
$$
|Bu| \ \leq \ C|u| \ \leq \ \|u\|_{ \mathrm{L}^{\infty}( \partial\Omega )} C {\bf 1}.
$$
Consequently,
$$
\|Bu\|_{\mathrm{L}^{\infty}( \partial\Omega )} \leq  \| C {\bf 1} \|_{\mathrm{L}^{\infty}( \partial\Omega )}
\|u\|_{ \mathrm{L}^{\infty}( \partial\Omega )}.
$$
Thus the restriction of $B$ to $\mathrm{L}^{\infty}( \partial\Omega )$ is continuous.
In contrast, our assumption does not require $B$ to act continuously on
$\mathrm{L}^{1}( \partial\Omega )$.\\

\begin{prp}
\label{prop:comparison-glueck-mui}
Suppose that $B \in \mathcal{L} \left(\mathrm{L}^2( \partial\Omega )\right)$ acts continuously on
$\mathrm{L}^{1}( \partial\Omega )$ and $\mathrm{L}^{\infty}( \partial\Omega )$. Then there exists a
positive operator $C \in \mathcal{L} \left(\mathrm{L}^2( \partial\Omega )\right)$ such that
$$
|Bu| \ \leq \ C|u|
$$ 
for every $u \in \mathrm{L}^2( \partial\Omega )$ and $C{\bf 1} \in \mathrm{L}^{\infty} ( \partial\Omega )$.\\
\end{prp}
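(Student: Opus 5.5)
The plan is to take $C$ to be the modulus $|B|$ of $B$, built on $\mathrm{L}^{1}(\partial\Omega)$, and then to check that it has the required properties. Throughout we use that $\sigma(\partial\Omega)<\infty$, so that
$$
\mathrm{L}^{\infty}(\partial\Omega)\subset\mathrm{L}^{2}(\partial\Omega)\subset\mathrm{L}^{1}(\partial\Omega),
$$
all sitting as ideals of $\mathrm{L}^{0}(\partial\Omega)$. By consistency, the extensions of $B$ to $\mathrm{L}^{1}$ and to $\mathrm{L}^{\infty}$ agree with $B$ on the respective intersections.

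\emph{Step 1 (regularity on $\mathrm{L}^{1}$).} Every bounded operator on $\mathrm{L}^{1}$ is regular, and its modulus exists and is given by the Riesz--Kantorovich formula
$$
|B|f \ = \ \sup\{\, |Bg| \ : \ |g|\leq f \,\}, \qquad 0\leq f\in \mathrm{L}^{1}(\partial\Omega).
$$
Here the supremum is taken in the Dedekind complete lattice $\mathrm{L}^{1}(\partial\Omega)$, and $\||B|\|_{1\to1}=\|B\|_{1\to1}$. This is a classical fact: $\mathrm{L}^{1}$ is an AL-space and a KB-space, so $\mathcal{L}(\mathrm{L}^{1})=\mathcal{L}^{r}(\mathrm{L}^{1})$. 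I would simply cite it, e.g. from Meyer-Nieberg or Aliprantis--Burkinshaw. We set $C:=|B|$ on $\mathrm{L}^{1}(\partial\Omega)$. It is positive, and $|Bu|\leq C|u|$ holds for every $u\in\mathrm{L}^{1}$, hence in particular for every $u\in\mathrm{L}^{2}(\partial\Omega)$.

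\emph{Step 2 (action on $\mathrm{L}^{\infty}$).} Let $0\leq f\in\mathrm{L}^{\infty}$ and $|g|\leq f$. Then
$$
|Bg| \ \leq \ \|B\|_{\infty\to\infty}\,\|f\|_{\infty}\,{\bf 1}.
$$
Hence the family whose supremum defines $|B|f$ is bounded above by a constant. Its supremum in $\mathrm{L}^{1}$ coincides with the pointwise essential supremum in $\mathrm{L}^{0}$, because both are Dedekind complete ideals of $\mathrm{L}^{0}$. Consequently
$$
\|Cf\|_{\infty} \ \leq \ \|B\|_{\infty\to\infty}\|f\|_{\infty}.
$$
So $C$ maps $\mathrm{L}^{\infty}$ boundedly into itself, and in particular $C{\bf 1}\in\mathrm{L}^{\infty}(\partial\Omega)$.

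\emph{Step 3 (boundedness on $\mathrm{L}^{2}$).} $C$ is a positive operator bounded on both $\mathrm{L}^{1}$ and $\mathrm{L}^{\infty}$. By Riesz--Thorin it is therefore bounded on $\mathrm{L}^{2}$ with $\|C\|_{2\to2}\leq\|C\|_{1\to1}^{1/2}\|C\|_{\infty\to\infty}^{1/2}$. An elementary alternative avoids interpolation: for positive operators one has the Cauchy--Schwarz inequality
$$
|Cf|^{2} \ \leq \ (C{\bf 1})\, C|f|^{2},
$$
which after integration gives $\|Cf\|_{2}^{2}\leq\|C{\bf 1}\|_{\infty}\|C\|_{1\to1}\|f\|_{2}^{2}$. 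Together with Steps 1 and 2 this proves the proposition.

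The main obstacle is Step 1, that is, justifying that $B$ is regular on $\mathrm{L}^{1}$ and that its modulus is given by the Riesz--Kantorovich supremum. I would quote the standard result rather than reprove it. If a self-contained argument is wanted, the route I would try first is to represent $B$ via the Dunford--Pettis theorem as a kernel operator $L^{1}\to L^{1}$. I would then take $C$ to be the operator with the modulus of that kernel, noting that this kernel representation is only available after approximating by finite-rank operators.
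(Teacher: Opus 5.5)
Your argument is correct and is essentially the construction the paper outsources to Gl\"uck--Mui, Lemma 3.1(iii). You take the modulus of $B$, which exists on $\mathrm{L}^{1}(\partial\Omega)$ because it is an AL-/KB-space. The Riesz--Kantorovich formula then shows that this modulus maps $\mathrm{L}^{\infty}(\partial\Omega)$ boundedly into itself, so $C{\bf 1}\in\mathrm{L}^{\infty}(\partial\Omega)$. Boundedness on $\mathrm{L}^{2}$ follows by interpolation or by the estimate $|Cf|^{2}\leq (C{\bf 1})\,C|f|^{2}$.

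The one thing to drop is the suggested Dunford--Pettis fallback. A general bounded operator on $\mathrm{L}^{1}$ is not a kernel operator: $B=I$, or any multiplication operator (a local Robin condition), satisfies the hypotheses but has no kernel. So the modulus has to come from the abstract lattice result you cite, not from a kernel $|k|$.
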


\begin{proof}
This follows from \cite[Lemma~3.1(iii)]{Glueck_Mui_2026}. Indeed, the
positive dominating operator constructed there acts boundedly on
$\mathrm{L}^{\infty}(\partial\Omega)$ and therefore maps ${\bf 1}$ into
$\mathrm{L}^{\infty}(\partial\Omega)$.
\end{proof}

The converse of Proposition \ref{prop:comparison-glueck-mui} is false.
The following simple example shows this.\\

\begin{ex}
\label{ex:strict-comparison-glueck-mui}
Choose a positive function
$b \in \mathrm{L}^{2}(\partial\Omega) \setminus \mathrm{L}^{\infty} (\partial\Omega)$
and define
$$
Bu = Cu =\left( \int_{\partial\Omega} b(y)u(y) \d \sigma(y) \right) {\bf 1}
$$
for $u \in \mathrm{L}^{2}( \partial\Omega )$.
Then $C=B$ is a positive bounded operator on $\mathrm{L}^{2}(\partial\Omega)$. 
Since $\sigma(\partial\Omega) < \infty$, we have $b \in \mathrm{L}^{1}(\partial\Omega)$.
Therefore
$$
C{\bf 1}  = \left( \int_{\partial\Omega} b(y) \d \sigma(y) \right) {\bf 1} \in \mathrm{L}^{\infty} ( \partial\Omega ).
$$
Hence the assumptions of Theorem \ref{Main_result} are satisfied.\\
\ \\
However, $B$ does not act continuously on $\mathrm{L}^{1}(\partial\Omega)$. Otherwise
the functional
$$
 \left\{  u \longmapsto \int_{\partial\Omega}b(y)u(y) \d\sigma(y) \right\}
$$
would be continuous on $\mathrm{L}^{1}( \partial\Omega )$. The identity
$(\mathrm{L}^1(\partial\Omega))^{\ast} = \mathrm{L}^{\infty}( \partial\Omega )$ would then imply
$b \in \mathrm{L}^{\infty}( \partial\Omega )$ which is a contradiction. Thus this
operator is covered by our theorem, but not by Theorem 3.2 in \cite{Glueck_Mui_2026}.
\end{ex}

Proposition \ref{prop:comparison-glueck-mui} and
Example \ref{ex:strict-comparison-glueck-mui} show that, 
in the dimensions considered here and under the common 
assumptions on $\Omega$ and $A$, Theorem \ref{Main_result} applies to a
strictly larger class of boundary operators. The price for the weaker
assumption is that our theorem only gives ultracontractivity of
$e^{-tL(A,B)}$. Glück and Mui also obtain the corresponding estimate
for the adjoint semigroup. Under our approach, the corresponding 
conclusion for the adjoint semigroup follows if one additionally assumes 
the existence of a positive operator
$\tilde{C} \in \mathcal{L} \left( \mathrm{L}^{2} (\partial\Omega) \right)$
such that
\begin{enumerate}[i)]
\item $|B^{\ast} u| \leq \tilde{C}|u|$ for every $u\in L^{2}(\partial\Omega)$,

\item $\tilde{C}{\bf 1} \in \mathrm{L}^{\infty}(\partial\Omega)$.
\end{enumerate}

Indeed,
$$
    L(A,B)^{\ast}=L(A^{T},B^{\ast}),
$$
and Theorem \ref{Main_result} can therefore be applied to $A^{T}$ and $B^{\ast}$.\\
\ \\
For the kernel operators in Corollary \ref{Main_cor} the difference can be seen
directly. The condition
$$
    \mathop{\operatorname{ess\,sup}}_{x\in\partial\Omega}
    \int_{\partial\Omega}|k(x,y)| \d \sigma(y) < \infty
$$
controls the action from $\mathrm{L}^{\infty}( \partial \Omega )$ to itself. A
corresponding standard condition for the action on
$\mathrm{L}^{1}(\partial\Omega)$ would be
$$
    \mathop{\operatorname{ess\,sup}}_{y\in\partial\Omega}
    \int_{\partial\Omega}|k(x,y)| \d \sigma(x) < \infty.
$$
For the kernel $k(x,y)=b(y)$ from Example~\ref{ex:strict-comparison-glueck-mui}, the condition in
Corollary~\ref{Main_cor} is satisfied, whereas $B$ does not act
continuously on $L^1(\partial\Omega)$.\\
\ \\
This difference is also reflected in the proofs. Glück and Mui first
obtain bounds on $\mathrm{L}^{1}(\Omega)$ and $\mathrm{L}^{\infty}(\Omega)$ for the semigroup
and its adjoint. 
In our proof, the bounded eigenfunction and the
domination result first yield an ${L}^{\infty}(\Omega)$-bound for
$\mathrm{e}^{-tL(A,B)}$. 
By duality, this gives an $\mathrm{L}^{1}(\Omega)$-bound for the
adjoint semigroup. Nash's inequality is then applied to the adjoint
semigroup and a second duality argument yields the asserted
$\mathrm{L}^{2}(\Omega)$-to-$\mathrm{L}^{\infty}(\Omega)$ estimate. 
This one-sided argument is the reason why boundedness of $B$ on 
$\mathrm{L}^{1}(\partial\Omega)$ is not required.\\
\ \\
For $d>2$, both results give the same short-time order
$t^{-d/4}$. Neither result yields
$$
\|\mathrm{e}^{-tL(A,B)}\|_{L^{2} \to L^{\infty}} \to 0
$$
as $t \to \infty$. The main
improvement of Theorem \ref{Main_result} is therefore the weaker assumption
on the boundary operator, rather than a stronger conclusion about
the large-time behaviour.

\section{Proof of Theorem \ref{Main_result}}\label{proof_main_theorem}

Throughout this section, let $\Omega$, $A$, $B$ and $C$ satisfy
the assumptions of Theorem \ref{Main_result}.  We use the forms 
$a(A,B)$ and $a(A,-C)$ and their associated operators $L(A,B)$ and 
$L(A,-C)$ as defined in Subsection \ref{form_a}. The key ingredient 
is an eigenfunction $\phi$ of $L(A,-C)$ satisfying
\begin{equation}\label{bounded_eigenfunction}
0<\delta\leq\phi\leq M<\infty
\end{equation}
almost everywhere in $\Omega$. This eigenfunction is constructed
independently in Section \ref{bounded_Eigenfunction} below.

\subsection{An $\mathrm{L}^{1}$-bound for $\mathrm{e}^{-tL(A, B)^{\ast}}$}

We first use (\ref{bounded_eigenfunction}) to derive an $\mathrm{L}^{\infty}$-bound.
Moreover Lemma \ref{inequ_semigroup_B} implies that
\begin{equation}\label{L^infty_continuity}
\left| \mathrm{e}^{-tL(A, B)}u \right| \ \leq \ \mathrm{e}^{-tL(A,-C)} \left| u \right|
\end{equation}
for every $t > 0$ and every $u \in \mathrm{L}^{2}(\Omega)$. We use 
(\ref{bounded_eigenfunction}) to show that the positive comparison semigroup
$\mathrm{e}^{-tL(A,-C)}$ acts boundedly on $\mathrm{L}^{\infty}(\Omega)$.\\

\begin{prp}\label{L_infty_boundedness}
Let $\phi$ be an eigenfunction of $L(A,-C)$ corresponding to an eigenvalue $\mu \in \R$, that is
$$
L(A,-C) \phi \ = \ \mu \phi.
$$
Suppose, in addition, that there exist constants $\delta, M > 0$ such that 
$$
0 \ < \ \delta \ \ \leq \ \phi \ \leq M \ < \ \infty
$$
almost everywhere in $\Omega$. Then the comparison semigroup $(\mathrm{e}^{-tL(A,-C)})_{t \geq 0}$
acts boundedly on $\mathrm{L}^{\infty}(\Omega)$ with
$$
\left\| \mathrm{e}^{-tL(A,-C)} \right\|_{\mathrm{L}^{\infty} \to \mathrm{L}^{\infty}} \ \leq \ \frac{M}{\delta} \mathrm{e}^{-t\mu}.
$$
It follows directly from  (\ref{L^infty_continuity}) that $\mathrm{e}^{-tL(A,B)}$ acts boundedly on
$\mathrm{L}^{\infty}(\Omega)$ for every $t > 0$.\\
\end{prp}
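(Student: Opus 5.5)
The plan is to combine the positivity of the comparison semigroup with the eigenfunction identity. Since $\phi \in D(L(A,-C))$ satisfies $L(A,-C)\phi = \mu\phi$, the function $v(t) = \mathrm{e}^{-t\mu}\phi$ lies in $D(L(A,-C))$, is continuously differentiable in $\mathrm{L}^{2}(\Omega)$, and solves $v'(t) = -L(A,-C)v(t)$ with $v(0)=\phi$. By uniqueness of classical solutions of the abstract Cauchy problem we obtain
$$
\mathrm{e}^{-tL(A,-C)}\phi \ = \ \mathrm{e}^{-t\mu}\phi
$$
for every $t \geq 0$. Equivalently, one can use the standard fact that an eigenvector of a generator is an eigenvector of the semigroup.

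Next, let $u \in \mathrm{L}^{\infty}(\Omega)$. Because $\Omega$ is bounded, $u$ also belongs to $\mathrm{L}^{2}(\Omega)$, so the semigroup can be applied to it. The first step is pointwise domination by the positive semigroup. Positivity of $\mathrm{e}^{-tL(A,-C)}$ follows from part i) of the proof of Lemma \ref{inequ_semigroup_B}, because $\mathrm{e}^{-tL(A,-C)} = \mathrm{e}^{t\lambda}\mathrm{e}^{-t\tilde L(A,-C)}$ is a positive multiple of a positive operator. Positivity gives $|\mathrm{e}^{-tL(A,-C)}u| \leq \mathrm{e}^{-tL(A,-C)}|u|$. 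Since $\phi \geq \delta$, we have
$$
|u| \ \leq \ \|u\|_{\mathrm{L}^{\infty}} \ \leq \ \frac{\|u\|_{\mathrm{L}^{\infty}}}{\delta}\,\phi
$$
almost everywhere. Applying positivity once more and then the eigenfunction identity, together with $\phi \leq M$, yields
$$
|\mathrm{e}^{-tL(A,-C)}u| \ \leq \ \frac{\|u\|_{\mathrm{L}^{\infty}}}{\delta}\,\mathrm{e}^{-tL(A,-C)}\phi \ = \ \frac{\|u\|_{\mathrm{L}^{\infty}}}{\delta}\,\mathrm{e}^{-t\mu}\phi \ \leq \ \frac{M}{\delta}\,\mathrm{e}^{-t\mu}\,\|u\|_{\mathrm{L}^{\infty}}.
$$
This is exactly the asserted operator norm bound.

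Finally, for $\mathrm{e}^{-tL(A,B)}$ we combine (\ref{L^infty_continuity}) with the bound just proved:
$$
|\mathrm{e}^{-tL(A,B)}u| \ \leq \ \mathrm{e}^{-tL(A,-C)}|u| \ \leq \ \frac{M}{\delta}\,\mathrm{e}^{-t\mu}\,\|u\|_{\mathrm{L}^{\infty}}.
$$
Hence $\mathrm{e}^{-tL(A,B)}$ is bounded on $\mathrm{L}^{\infty}(\Omega)$ with the same constant.

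The argument is essentially routine. The only point that needs care is justifying the identity $\mathrm{e}^{-tL(A,-C)}\phi=\mathrm{e}^{-t\mu}\phi$ for the shifted semigroup. This reduces to the corresponding statement for $\tilde L(A,-C)$ with eigenvalue $\mu+\lambda$, which follows from Proposition \ref{Semigroup_adjustment}. It also requires that $\mu$ be real, as the statement assumes; that condition is what makes $\mathrm{e}^{-t\mu}\phi$ a positive function.
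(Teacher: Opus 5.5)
Your proof is correct and follows essentially the same route as the paper: you dominate $|u|$ by $\delta^{-1}\|u\|_{\mathrm{L}^{\infty}(\Omega)}\,\phi$, use positivity of $\mathrm{e}^{-tL(A,-C)}$ together with the identity $\mathrm{e}^{-tL(A,-C)}\phi=\mathrm{e}^{-t\mu}\phi$, and then bound $\phi\leq M$. Your justification of that semigroup eigenvalue identity, via uniqueness for the Cauchy problem, and your explicit passage to $\mathrm{e}^{-tL(A,B)}$ via the domination (\ref{L^infty_continuity}) fill in steps the paper leaves implicit.
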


\begin{proof}
For every $u \in \mathrm{L}^{\infty}(\Omega)$, we have
$$
\left| u \right| \ \leq \ \frac{1}{\delta} \left\| u \right\|_{\mathrm{L}^{\infty}(\Omega)} \phi.
$$
Using positivity and the eigenvalue equation, we obtain
$$
\begin{aligned}
\left| \mathrm{e}^{-tL(A,-C)}  u \right|  & \leq \mathrm{e}^{-tL(A,-C)} \left| u \right|  \\
& \leq  \frac{1}{\delta} \left\| u \right\|_{\mathrm{L}^{\infty}(\Omega)} \mathrm{e}^{-tL(A,-C)} \phi \\
& =\frac{1}{\delta} \mathrm{e}^{-t\mu} \left\| u \right\|_{\mathrm{L}^{\infty}(\Omega)} \phi \\
& \leq  \frac{M}{\delta} \mathrm{e}^{-t\mu} \left\| u \right\|_{\mathrm{L}^{\infty}(\Omega)}.
\end{aligned}
$$
\end{proof}

In the next lemma, we use the Hahn-Banach extension theorem to show that $\mathrm{e}^{-tL(A, B)^{\ast}}$
extends to a bounded operator on  $\mathrm{L}^{1}\left( \Omega \right)$ at any time $t > 0$.\\

\begin{lem}\label{L1_Linfty_semigroup}
For every $t>0$ the operator $\mathrm{e}^{-tL(A, B)^{\ast}}$ extends uniquely
to a bounded operator on $\mathrm{L}^{1}(\Omega)$ and
$$
\left\| \mathrm{e}^{-tL(A, B)^{\ast}} \right\|_{\mathrm{L}^{1} \to \mathrm{L}^{1}} \ \leq \ 
\left\| \mathrm{e}^{-tL(A, B)} \right\|_{ \mathrm{L}^{\infty} \to \mathrm{L}^{\infty} } \ \leq\ \frac{M}{\delta} \mathrm{e}^{-t\mu}.
$$
\end{lem}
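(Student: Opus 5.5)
The plan is a duality argument on the dense subspace $\mathrm{L}^{1}(\Omega)\cap\mathrm{L}^{2}(\Omega)=\mathrm{L}^{2}(\Omega)$, which is dense in $\mathrm{L}^{1}(\Omega)$ because $\Omega$ is bounded. Fix $t>0$ and write $T=\mathrm{e}^{-tL(A,B)}$ and $T^{\ast}=\mathrm{e}^{-tL(A,B)^{\ast}}$, the Hilbert space adjoint in $\mathrm{L}^{2}(\Omega)$. By Proposition \ref{L_infty_boundedness}, $T$ maps $\mathrm{L}^{\infty}(\Omega)\subset\mathrm{L}^{2}(\Omega)$ into $\mathrm{L}^{\infty}(\Omega)$, with
$$
N:=\|T\|_{\mathrm{L}^{\infty}\to\mathrm{L}^{\infty}}\leq \frac{M}{\delta}\mathrm{e}^{-t\mu}.
$$

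\textbf{Step 1: the $\mathrm{L}^{1}$-estimate on $\mathrm{L}^{2}(\Omega)$.} Let $f\in\mathrm{L}^{2}(\Omega)$. Since $\mathrm{L}^{1}(\Omega)^{\ast}=\mathrm{L}^{\infty}(\Omega)$, we have
$$
\|T^{\ast}f\|_{\mathrm{L}^{1}}=\sup\left\{\left|\int_{\Omega}T^{\ast}f\,\overline{g}\d x\right| \ : \ g\in\mathrm{L}^{\infty}(\Omega),\ \|g\|_{\mathrm{L}^{\infty}}\leq 1\right\}.
$$
Concretely, the choice $g=\operatorname{sgn}(T^{\ast}f)$ already attains the supremum. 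For each such $g$, which lies in $\mathrm{L}^{2}(\Omega)$, the adjoint relation gives
$$
\int_{\Omega}T^{\ast}f\,\overline{g}\d x=\langle T^{\ast}f,g\rangle_{\mathrm{L}^{2}}=\langle f,Tg\rangle_{\mathrm{L}^{2}}.
$$
Hence the modulus is at most $\|f\|_{\mathrm{L}^{1}}\|Tg\|_{\mathrm{L}^{\infty}}\leq N\|f\|_{\mathrm{L}^{1}}$. Thus $\|T^{\ast}f\|_{\mathrm{L}^{1}}\leq N\|f\|_{\mathrm{L}^{1}}$ for all $f\in\mathrm{L}^{2}(\Omega)$.

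\textbf{Step 2: extension and uniqueness.} The estimate from Step 1 says that $T^{\ast}$ is uniformly continuous on the dense subspace $\mathrm{L}^{2}(\Omega)$ of $\mathrm{L}^{1}(\Omega)$ with respect to the $\mathrm{L}^{1}$-norm. It therefore extends by continuity, in the BLT sense, to a unique bounded operator on $\mathrm{L}^{1}(\Omega)$ with the same norm bound $N$. This yields both inequalities of the statement.

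The only point needing care is the duality step. One must use genuine elements $g\in\mathrm{L}^{\infty}(\Omega)$, rather than a Hahn--Banach functional on $\mathrm{L}^{\infty}$, so that the pairing coincides with the $\mathrm{L}^{2}$ inner product. One must also use the boundedness of $T$ on $\mathrm{L}^{\infty}(\Omega)$, which was established via the domination in Proposition \ref{L_infty_boundedness}. Otherwise the argument is routine.
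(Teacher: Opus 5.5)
Your proof is correct and follows the paper's argument. The paper also pairs $\mathrm{e}^{-tL(A,B)^{\ast}}u$ with a norming element $v\in\mathrm{L}^{\infty}(\Omega)=(\mathrm{L}^{1}(\Omega))^{\ast}$, which it obtains from Hahn--Banach where you write down $\operatorname{sgn}$ explicitly, then transfers the semigroup via the $\mathrm{L}^{2}$-adjoint relation, uses the $\mathrm{L}^{\infty}$-bound for $\mathrm{e}^{-tL(A,B)}$ coming from Proposition \ref{L_infty_boundedness}, and extends by density of $\mathrm{L}^{2}(\Omega)$ in $\mathrm{L}^{1}(\Omega)$.
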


\begin{proof}
Let $u \in \mathrm{L}^{2}\left( \Omega \right)$. Since $\Omega$ has finite measure, 
$\mathrm{L}^{2}\left( \Omega \right)$ embeds continuously into $\mathrm{L}^{1}\left( \Omega \right)$. 
Hence $\mathrm{e}^{-tL(A, B)^{\ast}}u \in \mathrm{L}^{2}\left( \Omega \right)$ also 
belongs to $\mathrm{L}^{1}\left( \Omega \right)$. By the Hahn-Banach theorem, 
there exists 
$v \in \mathrm{L}^{\infty}( \Omega ) = \left( \mathrm{L}^{1}( \Omega ) \right)^{\ast}$ with 
$\| v \|_{\mathrm{L}^{\infty}(\Omega)} = 1$ such that
$$
\langle \mathrm{e}^{-tL(A, B)^{\ast}}u, v \rangle \ = \ \left\| \mathrm{e}^{-tL(A, B)^{\ast}}u \right\|_{\mathrm{L}^{1}(\Omega)}.
$$
Since $\Omega$ has finite measure, 
$v \in \mathrm{L}^{\infty} \left( \Omega \right) \subset \mathrm{L}^{2} \left( \Omega \right)$.
Hence we obtain
\begin{align*}
\left\| \mathrm{e}^{-tL(A, B)^{\ast}}u \right\|_{\mathrm{L}^{1}(\Omega)} & = 
\left|  \langle \mathrm{e}^{-tL(A, B)^{\ast}}u, v \rangle \right| = \left|  \langle u, \mathrm{e}^{-tL(A, B)}v \rangle \right| \\
\ \\
& \leq \left\| \mathrm{e}^{-tL(A, B)}v \right\|_{\mathrm{L}^{\infty}(\Omega)} \ \| u \|_{\mathrm{L}^{1}(\Omega)}
\leq \left\| \mathrm{e}^{-tL(A, B)} \right\|_{ \mathrm{L}^{\infty} \to \mathrm{L}^{\infty} } \
\left\| v \right\|_{\mathrm{L}^{\infty}(\Omega)} \ \| u \|_{\mathrm{L}^{1}(\Omega)} \\
\ \\
& = \left\| \mathrm{e}^{-tL(A, B)} \right\|_{ \mathrm{L}^{\infty} \to \mathrm{L}^{\infty} } \ \| u \|_{\mathrm{L}^{1}(\Omega)} 
\end{align*}
where we used that $\mathrm{e}^{-tL(A, B)}$ is bounded on $\mathrm{L}^{\infty}\left( \Omega \right)$
and that $ \mathrm{e}^{-tL(A, B)^{\ast}}$ is the adjoint of $\mathrm{e}^{-tL(A, B)}$ on $\mathrm{L}^{2}\left( \Omega \right)$.
Therefore, $\mathrm{e}^{-tL(A, B)^{\ast}}$ extends uniquely to a bounded operator on 
$\mathrm{L}^{1}(\Omega)$, with
$$
\left\| \mathrm{e}^{-tL(A, B)^{\ast}} \right\|_{\mathrm{L}^{1} \to \mathrm{L}^{1}} \ \leq \ 
\left\| \mathrm{e}^{-tL(A, B)} \right\|_{ \mathrm{L}^{\infty} \to \mathrm{L}^{\infty} }
$$
since $\mathrm{L}^{2}(\Omega)$ is dense in $\mathrm{L}^{1}(\Omega)$.\\
\end{proof}

\subsection{The $\mathrm{L}^{1}$-to-$\mathrm{L}^{2}$ estimate for $\mathrm{e}^{-tL(A, B)^{\ast}}$}\label{Second_part}

We show that, for every $t>0$, the operator $\mathrm{e}^{-tL(A, B)^{\ast}}$ extends to a bounded operator from 
$\mathrm{L}^{1}\left( \Omega \right)$ to $\mathrm{L}^{2}\left( \Omega \right)$. 
The main ingredients are Nash's inequality (\ref{Nash_inequality}) and the estimate 
$$
\Re \tilde{a}(A, B)^{\ast}(v,v) = \Re \tilde{a}(A, B)(v,v) \geq  \frac{\alpha}{2} \| \nabla v \|_{\mathrm{L}^{2}(\Omega)^{d}}^{2}
$$
for every $v \in D \left( \tilde{a}(A, B)^{\ast} \right) = D \left( \tilde{a}(A, B) \right)$, as follows from Remark \ref{a(B)_Nash}. 
Furthermore, by Lemma \ref{L1_Linfty_semigroup}, $\mathrm{e}^{-tL(A, B)^{\ast}}$ extends to a bounded operator on
$\mathrm{L}^{1}\left( \Omega \right)$ for every $t>0$ with
$$
\left\| \mathrm{e}^{-tL(A, B)^{\ast}} \right\|_{\mathrm{L}^{1} \to \mathrm{L}^{1}} \ \leq \ \frac{M}{\delta} \mathrm{e}^{-t\mu}.
$$
We use the following version of Nash's inequality:
\begin{equation} \label{Nash_inequality}
\exists C_{d} > 0 \ \forall v \in H^{1}( \Omega ) \ : \
\| v \|_{\mathrm{L}^{2}(\Omega)}^{2+\frac{4}{d}} \leq C_{d} \| v \|_{\mathrm{L}^{1}(\Omega)}^{\frac{4}{d}}
\| v \|_{H^{1}(\Omega)}^{2};
\end{equation}
see \cite[Theorem 12.3.1, p. 162]{Arendt06}. If necessary, increase the common shift $\lambda$ from
Remark \ref{choice_lambda} so that
$$
    \mu + \lambda + \frac{\alpha}{2} \geq 0.
$$

\begin{lem} \label{adjoint_semigroup_L1_L2}
For any $t > 0$ the operator $\mathrm{e}^{-tL(A, B)^{\ast}}$ 
extends uniquely to a bounded operator from 
$\mathrm{L}^{1}\left( \Omega \right)$ to $\mathrm{L}^{2}\left( \Omega \right)$. In particular, there exists
a constant $K > 0$ such that
$$
\| \mathrm{e}^{-tL(A, B)^{\ast}} u \|_{\mathrm{L}^{2}\left( \Omega \right) } \ \leq \ 
K t^{-d/4} \ \exp\left(t (\lambda + \frac{\alpha}{2}) \right) \ \left\| u \right\|_{\mathrm{L}^{1} \left( \Omega \right)}
$$
for every $u \in \mathrm{L}^{1} \left( \Omega \right)$ and $t>0$.\\
\end{lem}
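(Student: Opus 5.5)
The plan is to run the classical Nash differential-inequality argument on a rescaled version of the adjoint semigroup, first for $u\in\mathrm{L}^{2}(\Omega)$ and then to extend by density. Concretely, with $\lambda$ the common shift from Remark \ref{choice_lambda} (already increased so that $\mu+\lambda+\frac{\alpha}{2}\geq 0$), I would set
$$
T(t) \ = \ \mathrm{e}^{-t(\lambda+\frac{\alpha}{2})}\,\mathrm{e}^{-tL(A,B)^{\ast}} \ = \ \mathrm{e}^{-t\frac{\alpha}{2}}\,\mathrm{e}^{-t\tilde{L}(A,B)^{\ast}},
$$
so that $\mathrm{e}^{-tL(A,B)^{\ast}}=\mathrm{e}^{t(\lambda+\frac{\alpha}{2})}T(t)$. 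It then suffices to show $\|T(t)u\|_{\mathrm{L}^{2}}\leq K t^{-d/4}\|u\|_{\mathrm{L}^{1}}$.

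\textbf{Step 1: uniform $\mathrm{L}^{1}$-bound for $T(t)$.} By Lemma \ref{L1_Linfty_semigroup} and the choice of $\lambda$,
$$
\|T(t)u\|_{\mathrm{L}^{1}}\leq \frac{M}{\delta}\,\mathrm{e}^{-t(\mu+\lambda+\frac{\alpha}{2})}\|u\|_{\mathrm{L}^{1}}\leq \frac{M}{\delta}\|u\|_{\mathrm{L}^{1}}.
$$
This bound is uniform in $t$, which is exactly what the Nash argument needs.

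\textbf{Step 2: the differential inequality.} Fix $u\in\mathrm{L}^{2}(\Omega)$, $u\neq 0$, and set $v(t)=T(t)u$ and $w(t)=\|v(t)\|_{\mathrm{L}^{2}}^{2}$. The form $\tilde{a}(A,B)^{\ast}$ is sectorial, so the semigroup is holomorphic. Hence $v(t)\in D(\tilde{L}(A,B)^{\ast})\subset H^{1}(\Omega)$ for $t>0$, and $w$ is differentiable on $(0,\infty)$ with
$$
-w'(t)=2\Re\tilde{a}(A,B)^{\ast}(v,v)+\alpha\|v\|_{\mathrm{L}^{2}}^{2}\geq \alpha\|\nabla v\|^{2}+\alpha\|v\|^{2}=\alpha\|v\|_{H^{1}}^{2}.
$$
Here I use Remark \ref{a(B)_Nash}: $\Re\tilde{a}(A,B)^{\ast}(v,v)=\Re\tilde{a}(A,B)(v,v)\geq\frac{\alpha}{2}\|\nabla v\|^{2}$, and accretivity persists with the larger shift $\lambda$. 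Nash's inequality (\ref{Nash_inequality}) combined with Step 1 gives
$$
w'(t)\leq -c\,w(t)^{1+\frac{2}{d}}, \qquad c=\frac{\alpha}{C_{d}}\Big(\frac{M}{\delta}\|u\|_{\mathrm{L}^{1}}\Big)^{-\frac{4}{d}}.
$$

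\textbf{Step 3: integration.} Since $w$ is nonincreasing, if $w(t_{0})=0$ then $w\equiv0$ on $[t_{0},\infty)$, and there is nothing to prove there. Where $w>0$, I would integrate $\frac{d}{dt}w^{-2/d}=-\frac{2}{d}w^{-1-2/d}w'\geq\frac{2c}{d}$ from $s\to0^{+}$ to $t$, discarding the positive term $w(s)^{-2/d}$. This yields $w(t)\leq(\frac{d}{2ct})^{d/2}$, that is,
$$
\|T(t)u\|_{\mathrm{L}^{2}}\leq \Big(\frac{dC_{d}}{2\alpha}\Big)^{d/4}\frac{M}{\delta}\,t^{-d/4}\|u\|_{\mathrm{L}^{1}}.
$$
This defines $K$. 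Multiplying by $\mathrm{e}^{t(\lambda+\frac{\alpha}{2})}$ gives the asserted estimate on $\mathrm{L}^{2}(\Omega)$. Since $\mathrm{L}^{2}(\Omega)$ is dense in $\mathrm{L}^{1}(\Omega)$, the operator extends uniquely to a bounded operator from $\mathrm{L}^{1}(\Omega)$ to $\mathrm{L}^{2}(\Omega)$ with the same bound.

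The main obstacle I expect is the rigorous justification of Step 2: differentiability of $w$, the identity $-w'=2\Re\langle(\tilde{L}^{\ast}+\frac{\alpha}{2})v,v\rangle$, and the fact that $v(t)\in H^{1}(\Omega)$ so that Nash's inequality applies. I would settle all three by appealing to holomorphy of form-generated semigroups, which gives $v\in C^{1}((0,\infty);\mathrm{L}^{2})$ with values in the operator domain. The handling of the endpoint $s\to0^{+}$ is routine once $w^{-2/d}$ is known to be increasing.
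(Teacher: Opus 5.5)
Your proposal is correct and follows the paper's argument essentially step by step. It uses the same rescaling $\mathrm{e}^{-t\alpha/2}\mathrm{e}^{-t\tilde{L}(A,B)^{\ast}}$, the same uniform $\mathrm{L}^{1}$-bound from Lemma \ref{L1_Linfty_semigroup} together with $\mu+\lambda+\frac{\alpha}{2}\geq 0$, the same Nash differential inequality, and even the same constant $K=\frac{M}{\delta}\bigl(\frac{dC_{d}}{2\alpha}\bigr)^{d/4}$. The only difference is technical: you treat every $u\in\mathrm{L}^{2}(\Omega)$ via holomorphy of the form semigroup and integrate from $s\to0^{+}$, whereas the paper restricts to $u\in D(\tilde{L}(A,B)^{\ast})$, where orbits are $C^{1}$ on $[0,\infty)$, and then extends by density of that domain in $\mathrm{L}^{1}(\Omega)$.
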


\begin{proof}
\begin{enumerate}[i)]
\item By the definition of the adjoint form
$$
\tilde{a}(A,B)^{\ast}(u,v) = \overline{\tilde{a}(A,B)(v,u)}.
$$
Consequently,
$$
\Re \tilde{a}(A,B)^{*}(u,u) = \Re \tilde{a}(A,B)(u,u).
$$
Thus, by Remark \ref{a(B)_Nash},
\begin{equation} \label{Ellipticity_A}
\Re \tilde{a}(A, B)^{\ast}(u,u) \ = \ \Re \tilde{a}(A, B)(u,u) \ \geq \ \frac{\alpha}{2} \| \nabla u \|_{\mathrm{L}^{2}(\Omega)^{d}}^{2}
\end{equation}
for every $u \in D \left( \tilde{a}(A, B)^{\ast} \right)$.

\item We now apply Nash's inequality. The semigroup associated with the adjoint form $\tilde{a}(A, B)^{\ast}$
is
$$
(\mathrm{e}^{-t\tilde{L}(A, B)^{\ast}})_{t \geq 0}.
$$
Let $u \in D ( \tilde{L}(A, B)^{\ast} )$ and fix $t > 0$. We define
$$
v(t) = \mathrm{e}^{-\frac{\alpha t}{2}} \mathrm{e}^{-t\tilde{L}(A, B)^{\ast}} u \in 
D ( \tilde{L}(A, B)^{\ast} ) \subset \mathrm{L}^{2} \left( \Omega \right).
$$
Since $\Omega$ has finite measure, both $v(t)$ and $u$ belong to $\mathrm{L}^{1} \left( \Omega \right)$.
By Proposition \ref{Semigroup_adjustment}, we have
$$
\mathrm{e}^{-t\tilde{L}(A, B)^{\ast}} = \mathrm{e}^{-t\lambda}\mathrm{e}^{-tL(A, B)^{\ast}}.
$$
By Lemma \ref{L1_Linfty_semigroup},
$$
\left\| v(t) \right\|_{\mathrm{L}^{1} \left( \Omega \right)} \ \leq \ \mathrm{e}^{- t (\lambda + \frac{\alpha}{2})} \ 
\left\| \mathrm{e}^{-tL(A, B)^{\ast}} \right\|_{\mathrm{L}^{1} \to \mathrm{L}^{1}} \ 
\left\| u \right\|_{\mathrm{L}^{1} \left( \Omega \right)}
\leq \ \frac{M}{\delta} \exp \left(-t(\mu + \lambda + \frac{\alpha}{2}) \right) 
\left\| u \right\|_{\mathrm{L}^{1} \left( \Omega \right)}.
$$
Since $u \in D( \tilde{L}(A, B)^{\ast} )$, standard semigroup theory implies that the map $s \mapsto v(s)$ is 
continuously differentiable in $\mathrm{L}^{2} \left( \Omega \right)$. Therefore,
\begin{align*}
\frac{\partial}{\partial t} \| v(t) \|^{2}_{\mathrm{L}^{2}\left( \Omega \right)} & = 
2 \Re \langle v^{\prime}(t), v(t) \rangle_{\mathrm{L}^{2}\left( \Omega \right)}
 = - 2 \Re \langle \tilde{L}(A, B)^{\ast} v(t), v(t)  \rangle_{\mathrm{L}^{2}\left( \Omega \right)} 
- \alpha \left\| v(t) \right\|_{\mathrm{L}^{2}\left( \Omega \right)}^{2} & \\
& = - \left( 2 \Re  \tilde{a}(A, B)^{\ast}(v(t), v(t)) + \alpha \left\| v(t) \right\|_{\mathrm{L}^{2}\left( \Omega \right)}^{2} \right) & \\
& \leq - \alpha  \| v(t) \|_{H^{1}(\Omega)}^{2} &
\end{align*}
by (\ref{Ellipticity_A}). If $v(t)=0$, the desired estimate is trivial. Hence we may assume
that $v(t) \neq 0$. By the semigroup property, this also implies $v(s) \neq 0$ for every $s\in[0,t]$.
Now we use Nash's inequality (\ref{Nash_inequality}) to obtain
$$
\frac{\partial}{\partial t} \| v(t) \|^{2}_{\mathrm{L}^{2}\left( \Omega \right)} \ \leq \
 - \alpha  \| v(t) \|_{H^{1}(\Omega)}^{2} \ \leq \ - \frac{\alpha}{C_{d}}  
\frac{ \| v(t) \|_{\mathrm{L}^{2}(\Omega)}^{2+\frac{4}{d}} }{ \| v(t) \|_{\mathrm{L}^{1}(\Omega)}^{\frac{4}{d}}  }.
$$
The chain rule therefore gives 
\begin{align*}
\frac{\partial}{\partial t} \left( \| v(t) \|^{2}_{\mathrm{L}^{2}\left( \Omega \right)} \right)^{-\frac{2}{d}} 
\ & \geq \ \left( -\frac{2}{d}  \right) \left( \| v(t) \|^{2}_{\mathrm{L}^{2}\left( \Omega \right)} \right)^{-\frac{2}{d}-1}
 \left( - \frac{\alpha}{C_{d}} \right)  
\frac{ \| v(t) \|_{\mathrm{L}^{2}(\Omega)}^{2+\frac{4}{d}} }{ \| v(t) \|_{\mathrm{L}^{1}(\Omega)}^{\frac{4}{d}}  } \\
\ \\
& = \ \frac{2 \alpha}{d C_{d}} \ \| v(t) \|_{\mathrm{L}^{1}(\Omega)}^{-\frac{4}{d}} & \\
\ \\
& \geq \ \frac{2 \alpha }{d C_{d}} \ \left( \frac{\delta}{M} \right)^{4/d} \exp \left( \frac{4t}{d}(\mu + \lambda  + \frac{\alpha}{2} ) \right) 
\left\| u \right\|_{\mathrm{L}^{1} \left( \Omega \right)}^{-\frac{4}{d}}. &
\end{align*}
Integrating over $[0,t]$, we obtain
\begin{align*}
\| v(t) \|_{\mathrm{L}^{2}\left( \Omega \right)}^{-\frac{4}{d}} 
& = \ \int_{0}^{t}  \frac{\partial}{\partial s} \ \| v(s) \|_{\mathrm{L}^{2}\left( \Omega \right)}^{-\frac{4}{d}} \d s 
+ \| u \|_{\mathrm{L}^{2}\left( \Omega \right)}^{-\frac{4}{d}} & \\
\ \\
& \geq \  \frac{2 \alpha }{d C_{d}} \ \left( \frac{\delta}{M} \right)^{4/d}  
\int_{0}^{t} \exp \left( \frac{4s}{d}(\mu + \lambda  + \frac{\alpha}{2} ) \right) \d s 
\left\| u \right\|_{\mathrm{L}^{1} \left( \Omega \right)}^{-\frac{4}{d}} &  \\
\ \\
& \geq \ \frac{2 \alpha }{d C_{d}} \ \left( \frac{\delta}{M} \right)^{4/d} \int_{0}^{t} 1 \d s 
\left\| u \right\|_{\mathrm{L}^{1} \left( \Omega \right)}^{-\frac{4}{d}} &  \\
\ \\
& =  \ \frac{2 \alpha }{d C_{d}} \left( \frac{\delta}{M} \right)^{4/d} 
\left\| u \right\|_{\mathrm{L}^{1} \left( \Omega \right)}^{-\frac{4}{d}} t & 
\end{align*} 
since $\mu + \lambda  + \frac{\alpha}{2} \geq 0$ by the choice of $\lambda$. Consequently,
$$
\| \mathrm{e}^{-\frac{\alpha t}{2}} \mathrm{e}^{-t\tilde{L}(A, B)^{\ast}} u \|_{\mathrm{L}^{2}\left( \Omega \right) } \ = \ 
\| v(t) \|_{\mathrm{L}^{2}\left( \Omega \right) } \ \leq \
\frac{M}{\delta} \left( \frac{d C_{d}}{2 \alpha} \right)^{d/4} \ t^{-d/4} \ 
\left\| u \right\|_{\mathrm{L}^{1} \left( \Omega \right)}
$$
Consequently, for every $u \in D ( \tilde{L}(A, B)^{\ast} )$,
$$
\| \mathrm{e}^{-t\tilde{L}(A, B)^{\ast}} u \|_{\mathrm{L}^{2}\left( \Omega \right) } \ \leq \
\frac{M}{\delta} \left( \frac{d C_{d}}{2 \alpha} \right)^{d/4} \ t^{-d/4} \ \mathrm{e}^{\frac{\alpha t}{2}} \
\left\| u \right\|_{\mathrm{L}^{1} \left( \Omega \right)}.
$$

\item Since $-\tilde{L}(A,B)^{\ast}$ generates a $C_{0}$-semigroup,
$D(\tilde{L}(A,B)^{\ast})$ is dense in $\mathrm{L}^{2}(\Omega)$.
Moreover, it is dense in $\mathrm{L}^{1}(\Omega)$.
Indeed, every element of $\mathrm{L}^{1}(\Omega)$
can first be approximated in $\mathrm{L}^{1}(\Omega)$ 
by elements of $\mathrm{L}^{2}(\Omega)$. Each of these elements 
can then be approximated in $\mathrm{L}^{2}(\Omega)$
and hence also in $\mathrm{L}^{1}(\Omega)$ by elements of $D(\tilde{L}(A,B)^{\ast})$.\\
Therefore, for every $t>0$, $\mathrm{e}^{-tL(A,B)^{\ast}}$ has a unique bounded extension from 
$\mathrm{L}^{1}(\Omega)$ to $\mathrm{L}^{2}(\Omega)$ satisfying
$$
\exists K > 0 \ \forall t > 0 \ \forall u \in \mathrm{L}^{1} \left( \Omega \right) \ : \
\| \mathrm{e}^{-tL(A, B)^{\ast}} u \|_{\mathrm{L}^{2}\left( \Omega \right) } \ \leq \ 
K t^{-d/4} \ \exp\left(t (\lambda + \frac{\alpha}{2}) \right) \ \left\| u \right\|_{\mathrm{L}^{1} \left( \Omega \right)},
$$
where we used $\mathrm{e}^{-t\tilde{L}(A, B)^{\ast}} = \mathrm{e}^{-t\lambda} \mathrm{e}^{-tL(A, B)^{\ast}}$.
\end{enumerate}
\end{proof}

\subsection{Duality}

For every $t > 0$ let 
$$
T(t) \ \colon \ \mathrm{L}^{2}\left( \Omega \right) \to \mathrm{L}^{\infty}\left( \Omega \right)
$$ 
be the Banach space adjoint of the bounded extension 
$$
\mathrm{e}^{-tL(A, B)^{\ast}} \ \colon \ \mathrm{L}^{1}\left( \Omega \right) \to \mathrm{L}^{2}\left( \Omega \right)
$$
obtained in Lemma \ref{adjoint_semigroup_L1_L2}. Then
$$
\langle \mathrm{e}^{-tL(A, B)^{\ast}}u, v  \rangle = \langle u, T(t)v \rangle
$$
for every $u \in \mathrm{L}^{1}\left( \Omega \right)$ and $v \in \mathrm{L}^{2}\left( \Omega \right)$, and
$$
\|  T(t) \|_{\mathrm{L}^{2} \to \mathrm{L}^{\infty}} = \|  \mathrm{e}^{-tL(A, B)^{\ast}} \|_{\mathrm{L}^{1} \to \mathrm{L}^{2}} 
\leq K t^{-d/4} \ \exp\left(t (\lambda + \frac{\alpha}{2}) \right).
$$
Since $L^{2}(\Omega)\subseteq L^{1}(\Omega)$ and
$\left( \mathrm{e}^{-tL(A,B)}\right)^{\ast} = \mathrm{e}^{-tL(A,B)^{\ast}}$
we obtain
$$
\langle \mathrm{e}^{-tL(A, B)} u, v \rangle_{\mathrm{L}^{2}(\Omega)} 
= \langle u, \mathrm{e}^{-tL(A, B)^{\ast}}v \rangle_{\mathrm{L}^{2}(\Omega)} = \langle T(t)u, v \rangle_{\mathrm{L}^{2}(\Omega)}  
$$
for all $u,v\in \mathrm{L}^{2}(\Omega)$. Hence
$$
\mathrm{e}^{-tL(A, B)} u = T(t)u
$$
for every $u \in \mathrm{L}^{2} \left( \Omega \right)$. The estimate for $T(t)$ therefore
proves Theorem \ref{Main_result}.

\section{Two-sided bounds for a positive eigenfunction of $L(A,-C)$}\label{bounded_Eigenfunction}

In this section we focus on heat equations
$$
\frac{\partial u}{\partial t} - \operatorname{div} \left( A \nabla u \right) = 0
$$ 
on bounded Lipschitz domains $\Omega$ in $\R^{d}$ for $d>2$, where $-\operatorname{div} \left( A \nabla \cdot \right)$ 
is a second-order uniformly elliptic operator with non-local Robin boundary conditions formally given by 
$$
\nu \cdot A \nabla u - Cu = 0
$$ 
with an outer unit normal $\nu$ on $\partial\Omega$, a positive and continuous boundary operator 
$C \in \mathcal{L} \left( \mathrm{L}^{2}\left( \partial \Omega  \right) \right)$ satisfying 
$C{\bf 1} \in \mathrm{L}^{\infty}(\partial \Omega)$
and a matrix-valued function $A \colon \Omega \to \R^{d \times d}$ with bounded and measurable
coefficients $a_{ij} \colon \Omega \to \R$. Furthermore, $A$ is uniformly elliptic on 
$\Omega$, i.e.
$$
\exists \alpha > 0 \ \forall \xi \in \C^{d}, x \in \Omega \ : \ 
\Re \left(  \left( A(x) \xi \right)^{T} \overline{\xi} \right) \geq \alpha \left| \xi \right|^{2}. 
$$
\ \\
According to Subsection \ref{form_a} an associated sesquilinear 
form $a(A,-C)$ to the mentioned heat equation with non-local Robin boundary conditions is defined by
\begin{equation}
a(A, -C)(u,v) = \int_{\Omega} A \nabla u \cdot \overline{\nabla v} \d x - 
\int_{\partial \Omega} C \gamma(u) \ \overline{\gamma(v)} \d \sigma(x)
\end{equation}
for $u,v \in D \left( a(A, -C) \right) = H^{1}(\Omega)$. The associated operator to $a(A,-C)$ is noted by
$L(A,-C)$ and $( \mathrm{e}^{-tL(A,-C)} )_{t \geq 0}$ is the associated operator semigroup in $\mathrm{L}^{2}(\Omega)$.\\
\ \\
In this section we prove the existence of an eigenfunction $\phi$ of $L(A,-C)$ such that 
$$
\exists \delta, M > 0 \ : \ \delta \ \leq \ \phi \ \leq \ M 
$$
is true almost everywhere on $\Omega$. Our argumentation is divided into these steps: 
\begin{enumerate}
\item Positivity and compactness of the resolvents of $L(A,-C)$, 
\item Existence of a positive eigenfunction $\phi$ of $L(A,-C)$ by Krein-Rutman,
\item Upper bound for $\phi$ by iteration of exponents, 
\item Lower bound for $\phi$ by comparison with the Neumann semigroup.\\ 
\end{enumerate}

\begin{rem}
Notice that the property $C{\bf 1} \in \mathrm{L}^{\infty}( \partial \Omega)$ is not needed to prove the existence of a strictly
positive eigenfunction $\phi$ nor is it needed to prove a strictly positive lower bound $\delta$ of $\phi$. It is exclusively needed
to prove the existence of the upper bound $M$.
\end{rem}

\subsection{Preliminaries}\label{Neumann_semigroups_introduction}

We focus our argumentation on Neumann forms $a(A, 0)$ and Neumann semigroups  $( \mathrm{e}^{-tL(A,0)} )_{t \geq 0}$ in 
$\mathrm{L}^{2}(\Omega)$. According to Subsection \ref{form_a} these forms are defined as
\begin{equation}
a(A, 0)(u,v) = \int_{\Omega} A \nabla u \cdot \overline{\nabla v} \d x
\end{equation}
for $u,v \in D \left( a(A, 0) \right) = H^{1}\left( \Omega \right)$. The negative associated operator $-L(A,0)$ 
of $a(A, 0)$ generates $( \mathrm{e}^{-tL(A,0)} )_{t \geq 0}$.
The operators $-L(A,-C)$ and $-L(A,0)$ are generators of $C_{0}$-semigroups in $\mathrm{L}^{2}(\Omega)$. Hence for a sufficiently
large $\lambda > 0$ we conclude that $\lambda$ is contained in both resolvent sets $\rho\left( -L(A,0) \right)$
and $\rho\left( -L(A,-C) \right)$.\\

\begin{lem}\label{resolvent_comparison}
Let $\lambda > 0$ be sufficiently large such that $\lambda \in \rho\left( -L(A,0) \right) \cap \rho\left( -L(A,-C) \right)$ holds. Then
$$
0 \ \leq \ \left( \lambda + L(A,0) \right)^{-1} u \ \leq \ \left( \lambda + L(A,-C) \right)^{-1} u
$$
is true almost everywhere in $\Omega$ for any $0 \leq u \in \mathrm{L}^{2}(\Omega)$.
\end{lem}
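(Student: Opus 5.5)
We aim to show that both semigroups are positive and that the Neumann semigroup is dominated by the Robin semigroup with boundary operator $-C$. The resolvent inequality then follows from the Laplace transform representation
$$
\left( \lambda + L \right)^{-1} u \ = \ \int_{0}^{\infty} \mathrm{e}^{-\lambda t}\, \mathrm{e}^{-tL} u \d t ,
$$
which is valid for $\lambda$ larger than the growth bound. Since the shifts of Remark \ref{choice_lambda} make both auxiliary forms accretive, the semigroups $\mathrm{e}^{-tL(A,0)}$ and $\mathrm{e}^{-tL(A,-C)}$ are bounded by $\mathrm{e}^{t\lambda_{0}}$ in norm. For $\lambda$ beyond this bound the integrals converge in $\mathrm{L}^{2}(\Omega)$. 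The lemma is then claimed for all $\lambda$ in the stated range in this sense; if needed, we restrict "sufficiently large" to $\lambda$ exceeding both growth bounds.

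The first step is positivity of both semigroups. For $\mathrm{e}^{-tL(A,-C)}$ this is part i) of the proof of Lemma \ref{inequ_semigroup_B}. For $\mathrm{e}^{-tL(A,0)}$ the same computation applies with $C=0$: we have $\tilde a(A,0)((\Re u)^{+},(\Re u)^{-})=0$, so \cite[Theorem 2.6]{Ouhabaz05} gives positivity. Alternatively, one can simply cite that the Neumann semigroup is positive.

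The second step is the pointwise domination $0\le \mathrm{e}^{-tL(A,0)}u\le \mathrm{e}^{-tL(A,-C)}u$ for $u\ge 0$. We apply Lemma \ref{inequ_semigroup_B} with $B=0$; its hypothesis $|0\cdot u|\le C|u|$ holds trivially because $C$ is positive. With the common shift, this gives $|\mathrm{e}^{-tL(A,0)}w|\le \mathrm{e}^{-tL(A,-C)}|w|$ for all $w\in\mathrm{L}^{2}(\Omega)$. For $w=u\ge0$, positivity of the Neumann semigroup lets us drop the modulus on the left, which yields the claim. The shifts by $\mathrm{e}^{t\lambda}$ cancel on both sides, so the domination passes from the auxiliary operators $\tilde L$ to $L$.

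The third step integrates the pointwise inequality against $\mathrm{e}^{-\lambda t}\,\mathrm{d}t$. Because the order cone in $\mathrm{L}^{2}(\Omega)$ is closed and the Bochner integral preserves positivity (test against positive $\varphi\in\mathrm{L}^{2}(\Omega)$), we obtain $0\le(\lambda+L(A,0))^{-1}u\le(\lambda+L(A,-C))^{-1}u$.

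The main obstacle is making precise that "sufficiently large" covers the Laplace-transform regime. If the lemma is needed for every $\lambda$ in both resolvent sets, we would first prove it for large $\lambda$. We would then extend it to the rest of $\rho(-L(A,0)) \cap \rho(-L(A,-C))$ only for those $\lambda$ above the spectral bounds, using the resolvent identity and power series in $\lambda$, since positive semigroups have positive resolvents exactly to the right of the spectral bound. Since the lemma is stated for sufficiently large $\lambda$, we plan to simply take $\lambda$ larger than both exponential growth bounds.
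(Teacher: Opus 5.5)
Your argument is correct, but it runs in the opposite direction from the paper's.

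The paper proves the resolvent inequality directly with a weak comparison (maximum-principle) argument. It writes the weak equations for $v_0=(\lambda+L(A,0))^{-1}u$ and $v_C=(\lambda+L(A,-C))^{-1}u$, subtracts them, and tests with $w=(v_0-v_C)^{+}\in H^{1}(\Omega)$. Ellipticity makes the gradient term at least $\alpha\|\nabla w\|^{2}\geq 0$. The boundary term $\langle C\gamma(v_C),\gamma(w)\rangle$ is nonnegative because $v_C\geq 0$ and $C$ is positive. What remains is $\lambda\|w\|_{L^{2}}^{2}\leq 0$, hence $w=0$. Positivity of the semigroups enters only to get $v_0,v_C\geq 0$ through the Laplace transform. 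The semigroup domination $0\leq e^{-tL(A,0)}\leq e^{-tL(A,-C)}$ is derived afterwards from this lemma via Euler's formula.

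You instead get the semigroup domination first and then Laplace-transform it. You obtain it by applying Lemma~\ref{inequ_semigroup_B} with $B=0$, which is legitimate since $|0\cdot u|=0\leq C|u|$ by positivity of $C$.

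Given what the paper already contains, your route is shorter. It also yields the semigroup comparison directly, so the later appeal to Euler's formula becomes unnecessary. The price is that it rests on Ouhabaz's domination criterion: the ideal property and the form inequality $\tilde a(A,-C)(|u|,|v|)\leq\Re\,\tilde a(A,0)(u,v)$ for $u\bar v\geq 0$. The paper's argument is elementary and self-contained by comparison, using only the lattice property of $H^{1}(\Omega)$, ellipticity and positivity of $C$.

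Both proofs need $\lambda$ beyond the growth bounds for the Laplace representation, so your reading of ``sufficiently large'' matches the paper's. Your closing remark about extending the inequality to the rest of the resolvent sets is not needed.
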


\begin{proof}
Let $0 \leq u \in \mathrm{L}^{2}(\Omega)$ be arbitrary but fixed. Then set 
$$
v_{0} = \left( \lambda + L(A,0) \right)^{-1} u \ \text{and} \ v_{C} = \left( \lambda + L(A,-C) \right)^{-1} u.
$$
Both of these functions $v_{0}$ and
$v_{C}$ are positive since their respective operator semigroups consist of positive operators as shown in the proof of 
Lemma \ref{inequ_semigroup_B}. Hence
$$
v_{C} = \left( \lambda + L(A,-C) \right)^{-1} u = \int_{0}^{\infty} \mathrm{e}^{-\lambda t}  \mathrm{e}^{-tL(A,-C)}u \d t \geq 0
$$
and similarly $v_{0} \geq 0$ are true pointwise almost everywhere in $\Omega$. Furthermore
\begin{equation}\label{Neumann_form_equation}
\int_{\Omega} A \nabla v_{0} \cdot \overline{\nabla w} \d x + \lambda \int_{\Omega} v_{0} \overline{w} \d x
 \ = \ \int_{\Omega} u\overline{w} \d x
\end{equation}
and 
\begin{equation}\label{C_form_equation}
\int_{\Omega} A \nabla v_{C} \cdot \overline{\nabla w} \d x + \lambda \int_{\Omega} v_{C} \overline{w} \d x
- \langle C\gamma(v_{C}), \gamma(w) \rangle_{\mathrm{L}^{2}(\partial \Omega)}
 \ = \ \int_{\Omega} u\overline{w} \d x
\end{equation}
are both implied for every $w \in H^{1}(\Omega)$. Notice that $v_{0}$ and $v_{C}$ are both contained in the domain of their
respective forms. Therefore we set
$w = \left( v_{0} - v_{C} \right)^{+} \in H^{1}(\Omega)$. Subtracting (\ref{C_form_equation}) from (\ref{Neumann_form_equation})
implies
$$
0 = \int_{\Omega} A \nabla (v_{0} - v_{C}) \cdot \nabla w \d x + \lambda \int_{\Omega} (v_{0} - v_{C}) w \d x
+ \langle C\gamma(v_{C}), \gamma(w) \rangle_{\mathrm{L}^{2}(\partial \Omega)}
$$
where $C\gamma(v_{C})$ and $\gamma(w)$ are both positive functions on $\partial \Omega$. Furthermore
\begin{align*}
\int_{\Omega} A \nabla (v_{0} - v_{C}) \cdot \nabla w \d x & = \Re \int_{\Omega} A \nabla (v_{0} - v_{C}) \cdot \nabla w \d x \\
\ \\
& = \int_{\Omega} 1_{\{ v_{0} > v_{C} \}} \Re A \nabla (v_{0} - v_{C}) \cdot \nabla (v_{0} - v_{C})  \d x \\ 
\ \\
& \geq \alpha  \int_{\Omega} 1_{\{ v_{0} > v_{C} \}} \left| \nabla (v_{0} - v_{C}) \right|^{2} \d x \\
\ \\
& = \alpha \| \nabla w \|_{\mathrm{L}^{2}(\Omega)^{d}}^{2} \geq 0
\end{align*}
is true where we used the fact that $\nabla (v_{0} - v_{C})$ and $\nabla w$ are both real-valued functions for the first equation. So
\begin{align*}
0 & = \int_{\Omega} A \nabla (v_{0} - v_{C}) \cdot \nabla w \d x + \lambda \int_{\Omega} (v_{0} - v_{C}) w \d x
+ \langle C\gamma(v_{C}), \gamma(w) \rangle_{\mathrm{L}^{2}(\partial \Omega)} \\
& \geq \lambda \int_{\Omega} (v_{0} - v_{C}) w \d x = \lambda \| w \|_{\mathrm{L}^{2}(\Omega)}^{2} \geq 0
\end{align*}
implies $w=0$ almost everywhere in $\Omega$ which proves the claim.\\
\end{proof}

Next let us show that the property $C{\bf 1} \in \mathrm{L}^{\infty}(\partial \Omega)$ of the boundary operator $C$
implies a $\mathrm{L}^{\infty}$-continuity first and then implies a $\mathrm{L}^{p}$-boundedness for every $2 \leq p \leq \infty$
by the Riesz-Thorin interpolation theorem.\\

\begin{lem}\label{C_interpolation}
The boundary operator $C$ is  $\mathrm{L}^{\infty}$-continuous with 
$$
\left\| C \right\|_{\mathrm{L}^{\infty} \to \mathrm{L}^{\infty}} \ = \ \left\| C {\bf 1}\right\|_{\mathrm{L}^{\infty}(\partial \Omega)}.
$$
In particular, 
$$
\left| Cu \right| \ \leq \ \left\| u \right\|_{\mathrm{L}^{\infty}(\partial \Omega)} C {\bf 1}
$$ 
holds pointwise almost everywhere in $\partial \Omega$ for every $u \in \mathrm{L}^{\infty}(\partial \Omega)$. Furthermore
$C$ acts boundedly on $\mathrm{L}^{p}(\partial \Omega)$ for every $p \in [2,\infty)$ with
$$
\left\| C \right\|_{\mathrm{L}^{p} \to \mathrm{L}^{p}} \ \leq \ 
\max\left\{ \left\| C \right\|_{\mathrm{L}^{2} \to \mathrm{L}^{2}}, \left\| C {\bf 1}\right\|_{\mathrm{L}^{\infty}(\partial \Omega)} 
\right\} 
$$
\end{lem}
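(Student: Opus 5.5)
The plan has three steps: first the pointwise bound on $\mathrm{L}^{\infty}(\partial\Omega)$, then the equality of norms, and finally Riesz--Thorin interpolation for $p \in [2,\infty)$. Throughout we use that $C$ is a positive operator on $\mathrm{L}^{2}(\partial\Omega)$. Since $\sigma(\partial\Omega)<\infty$, we have $\mathrm{L}^{\infty}(\partial\Omega)\subset \mathrm{L}^{2}(\partial\Omega)$, so $C$ is defined on $\mathrm{L}^{\infty}(\partial\Omega)$.

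\textbf{Step 1: the pointwise bound.} A positive operator on a complex $\mathrm{L}^2$-space satisfies $|Cu|\le C|u|$. For real $u$ this follows from $-C|u| \le Cu \le C|u|$, which uses $u^{\pm}\le |u|$ and linearity. For complex $u$ one writes
$$
|Cu| = \sup_{\theta \in \mathbb{Q}} \Re\bigl(\mathrm{e}^{i\theta} Cu\bigr) = \sup_{\theta \in \mathbb{Q}} C\bigl(\Re(\mathrm{e}^{i\theta} u)\bigr) \le C|u|.
$$
Here we use that $C$ maps real functions to real functions, which holds because $C$ is positive, and that the supremum is taken over a countable set, so it is well defined almost everywhere. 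For $u\in \mathrm{L}^{\infty}(\partial\Omega)$ we have $|u|\le \|u\|_{\infty}{\bf 1}$, so positivity gives
$$
|Cu| \le C|u| \le \|u\|_{\mathrm{L}^{\infty}(\partial\Omega)}\, C{\bf 1}
$$
almost everywhere. This is the asserted pointwise estimate.

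\textbf{Step 2: the norm identity.} Taking the essential supremum in Step 1 gives $\|C\|_{\mathrm{L}^{\infty}\to \mathrm{L}^{\infty}} \le \|C{\bf 1}\|_{\mathrm{L}^{\infty}(\partial\Omega)}$. Testing with $u={\bf 1}$, which has norm $1$, gives the reverse inequality. Hence the two quantities are equal.

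\textbf{Step 3: interpolation.} $C$ is bounded on $\mathrm{L}^{2}(\partial\Omega)$ and on $\mathrm{L}^{\infty}(\partial\Omega)$, and the two actions are consistent because they are the same operator on $\mathrm{L}^{\infty}\subset \mathrm{L}^{2}$. For $p\in[2,\infty)$ choose $\theta = 1-2/p \in [0,1)$, so that $1/p = (1-\theta)/2 + \theta/\infty$. The Riesz--Thorin theorem, applied to the restriction of $C$ to simple functions, gives
$$
\|C\|_{\mathrm{L}^{p}\to \mathrm{L}^{p}} \le \|C\|_{\mathrm{L}^{2}\to \mathrm{L}^{2}}^{1-\theta}\, \|C{\bf 1}\|_{\mathrm{L}^{\infty}}^{\theta} \le \max\bigl\{\|C\|_{\mathrm{L}^{2}\to \mathrm{L}^{2}}, \|C{\bf 1}\|_{\mathrm{L}^{\infty}}\bigr\}.
$$
Simple functions are dense in $\mathrm{L}^{p}(\partial\Omega)$, and they lie in $\mathrm{L}^{p}\subset \mathrm{L}^{2}$ because the measure is finite and $p \ge 2$. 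Hence the bounded extension coincides with $C$ on $\mathrm{L}^{p}(\partial\Omega)$.

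The only step needing real care is the complex-valued modulus inequality $|Cu|\le C|u|$ in Step 1; the rest is routine.
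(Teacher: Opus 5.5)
Your proof is correct and follows the same route as the paper: the pointwise chain $|Cu| \le C|u| \le \|u\|_{\mathrm{L}^{\infty}(\partial\Omega)}\, C{\bf 1}$, testing with ${\bf 1}$ for the reverse norm inequality, and Riesz--Thorin between $\mathrm{L}^{2}$ and $\mathrm{L}^{\infty}$. You supply more detail than the paper, which treats $|Cu|\le C|u|$ for complex $u$ as immediate and only announces the interpolation step in the sentence before the lemma, without carrying it out or giving the density/consistency argument you include.
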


\begin{proof}
Let $u \in \mathrm{L}^{\infty}(\partial \Omega)$ be arbitrary but fixed. Then $\left| Cu \right| \leq C \left| u \right|$ is implied pointwise
almost everywhere on $\partial \Omega$ by the positivity of the boundary operator $C$. Remember that $Cu$ is well defined since
$\mathrm{L}^{\infty}(\partial \Omega)$ is contained in $\mathrm{L}^{2}(\partial \Omega)$
as $\Omega$ is a bounded Lipschitz domain in $\R^{d}$. Using 
$\left| u\right| \leq \left\| u \right\|_{\mathrm{L}^{\infty}(\partial \Omega)}$ and the positivity of $C$ once again we conclude to
$$
\left| Cu \right| \ \leq \ C \left| u \right| \ \leq 
\left\| u \right\|_{\mathrm{L}^{\infty}(\partial \Omega)} C {\bf 1}
$$ 
which implies a $\mathrm{L}^{\infty}$-continuity of $C$ with 
$\left\| C \right\|_{\mathrm{L}^{\infty} \to \mathrm{L}^{\infty}} \ \leq \ \left\| C {\bf 1}\right\|_{\mathrm{L}^{\infty}(\partial \Omega)}$.
Furthermore 
$$
\left\| C {\bf 1}\right\|_{\mathrm{L}^{\infty}(\partial \Omega)} \leq \left\| C \right\|_{\mathrm{L}^{\infty} \to \mathrm{L}^{\infty}} 
\left\| {\bf 1} \right\|_{\mathrm{L}^{\infty}(\partial \Omega)} = \left\| C \right\|_{\mathrm{L}^{\infty} \to \mathrm{L}^{\infty}} 
$$
proves the claim.
\end{proof}

\subsection{Existence of a positive eigenfunction $\phi$ of $L(A,-C)$}\label{existence_phi}

Our main argument in this subsection is the Krein-Rutman theorem to prove the existence of a 
positive eigenfunction $\phi$ of $L(A,-C)$. For a sufficiently large $0 < \lambda$ we show
that the spectral radius of the resolvent 
$$
R_{\lambda} = \left( \lambda + L(A,-C) \right)^{-1} \in \mathcal{L} \left( \mathrm{L}^{2}(\Omega) \right)
$$
is an eigenvalue of $R_{\lambda}$ to a positive eigenfunction $\phi$. To satisfy the conditions of the 
Krein-Rutman theorem in $\mathrm{L}^{2}(\Omega, \R)$, we must show that $R_{\lambda}$ is a compact 
and positive operator in  $\mathrm{L}^{2}(\Omega, \R)$ and that the spectral radius $\rho(R_{\lambda})$ 
of $R_{\lambda}$ is strictly positive. Please notice that 
$$
\mathcal{K} = \left\{ u \in \mathrm{L}^{2}(\Omega, \R) \ : \ u \geq 0 \ \text{a. e. in} \ \Omega \right\}
$$
is a total cone in $\mathrm{L}^{2}(\Omega, \R)$ and invariant under $R_{\lambda}$.\\

In the following argumentation let $\lambda > 0$ be sufficiently large that there exists a constant 
$c_{\lambda} > 0$ such that
$$
\Re a(A,-C) (u,u) + \lambda \| u \|_{\mathrm{L}^{2}(\Omega, \R)}^{2} \ \geq \ c_{\lambda} \| u \|_{H^{1}(\Omega, \R)}^{2}
$$
is true for every $u \in H^{1}(\Omega, \R)$. Please compare with (\ref{H1_inequ}) in Subsection \ref{form_a}.

\begin{lem}
The resolvent $R_{\lambda}$ is a positive and compact operator in $\mathrm{L}^{2}(\Omega, \R)$.
\end{lem}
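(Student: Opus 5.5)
The argument has two independent parts, positivity and compactness, and in both I work with the real form. Since $A$ has real coefficients and $C$ is positive, $C$ maps real functions to real functions, so $L(A,-C)$ and $R_{\lambda}$ leave $\mathrm{L}^{2}(\Omega,\R)$ invariant. It therefore suffices to prove both properties on the complex space and restrict.

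\emph{Positivity.} In the proof of Lemma \ref{inequ_semigroup_B} we showed with Ouhabaz's criterion \cite[Theorem 2.6]{Ouhabaz05} that $(\mathrm{e}^{-t\tilde{L}(A,-C)})_{t\geq0}$ is positive. Hence $\mathrm{e}^{-tL(A,-C)} = \mathrm{e}^{t\lambda_{0}}\mathrm{e}^{-t\tilde{L}(A,-C)}$ is positive as well. For $\lambda$ larger than the growth bound, which holds under the coercivity assumption on $\lambda$, the Laplace transform representation
$$
R_{\lambda}u = \int_{0}^{\infty} \mathrm{e}^{-\lambda t}\mathrm{e}^{-tL(A,-C)}u \d t
$$
is available, exactly as in Lemma \ref{resolvent_comparison}. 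Integrating a positive integrand gives $R_{\lambda}u\geq0$ for $u\geq0$. Equivalently, $R_{\lambda}\mathcal{K}\subset\mathcal{K}$.

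\emph{Compactness.} I factor $R_{\lambda}$ through $H^{1}(\Omega)$. Fix $u\in\mathrm{L}^{2}(\Omega,\R)$ and put $v=R_{\lambda}u\in D(L(A,-C))\subset H^{1}(\Omega)$. Then
$$
a(A,-C)(v,w)+\lambda\langle v,w\rangle = \langle u,w\rangle
\quad\text{for every } w\in H^{1}(\Omega).
$$
Testing with $w=v$, taking real parts and using the coercivity estimate with constant $c_{\lambda}$ gives
$$
c_{\lambda}\|v\|_{H^{1}(\Omega)}^{2} \leq \|u\|_{\mathrm{L}^{2}(\Omega)}\|v\|_{\mathrm{L}^{2}(\Omega)} \leq \|u\|_{\mathrm{L}^{2}(\Omega)}\|v\|_{H^{1}(\Omega)}.
$$
Hence $\|R_{\lambda}u\|_{H^{1}(\Omega)}\leq c_{\lambda}^{-1}\|u\|_{\mathrm{L}^{2}(\Omega)}$, so $R_{\lambda}$ is bounded from $\mathrm{L}^{2}(\Omega)$ into $H^{1}(\Omega)$. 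Since $\Omega$ is a bounded Lipschitz domain, the Rellich--Kondrachov theorem makes the embedding $H^{1}(\Omega)\hookrightarrow\mathrm{L}^{2}(\Omega)$ compact. Therefore $R_{\lambda}$, a compact map composed with a bounded one, is compact on $\mathrm{L}^{2}(\Omega)$, and so is its restriction to $\mathrm{L}^{2}(\Omega,\R)$.

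The only point needing care is that the Laplace integral is valid for the chosen $\lambda$, that is, that $\lambda$ exceeds the growth bound of $\mathrm{e}^{-tL(A,-C)}$. This follows because the coercivity assumption makes the form $a(A,-C)+\lambda$ accretive, so $\mathrm{e}^{-t(L(A,-C)+\lambda)}$ is contractive. If one wants to avoid that, one can instead argue positivity directly: test the resolvent equation with $w=v^{-}$. Then $\langle C\gamma(v^{+}),\gamma(v^{-})\rangle\geq0$ and $-\langle C\gamma(v^{-}),\gamma(v^{-})\rangle\leq 0$, and together with coercivity this yields $v^{-}=0$.
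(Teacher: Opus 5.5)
Your proposal is correct and follows the paper's proof essentially verbatim: positivity comes from the Laplace-transform representation $R_{\lambda}u=\int_{0}^{\infty}\mathrm{e}^{-\lambda t}\mathrm{e}^{-tL(A,-C)}u \d t$ of the positive semigroup, and compactness comes from the coercivity bound $\|R_{\lambda}f\|_{H^{1}(\Omega)}\leq c_{\lambda}^{-1}\|f\|_{\mathrm{L}^{2}(\Omega)}$ composed with the compact Rellich--Kondrachov embedding. One small precision: mere contractivity of $\mathrm{e}^{-t(L(A,-C)+\lambda)}$ does not justify the Laplace integral, but the decay $\|\mathrm{e}^{-t(L(A,-C)+\lambda)}\|\leq\mathrm{e}^{-c_{\lambda}t}$ does, and coercivity gives this because $\Re a(A,-C)(u,u)+\lambda\|u\|^{2}\geq c_{\lambda}\|u\|^{2}$.
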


\begin{proof}
Due to 
$$
R_{\lambda}v = \int_{0}^{\infty} \mathrm{e}^{-\lambda t}  \mathrm{e}^{-tL(A,-C)}v \d t \geq 0 
$$
for $0 \leq v \in \mathrm{L}^{2}(\Omega, \R)$, we only have to prove the compactness of $R_{\lambda}$. \\
Let $f \in \mathrm{L}^{2}(\Omega, \R)$ be arbitrary but fixed and set 
$$
u = R_{\lambda} f \in D\left( L(A, -C)\right) \subset H^{1}(\Omega, \R).
$$
Then
\begin{align*}
c_{\lambda} \| u \|_{H^{1}(\Omega, \R)}^{2} & \leq \Re a(A,-C) (u,u) + \lambda \| u \|_{\mathrm{L}^{2}(\Omega, \R)}^{2} = 
\Re \langle f, u \rangle_{\mathrm{L}^{2}(\Omega, \R)} \\
& \leq \| f \|_{\mathrm{L}^{2}(\Omega, \R)} \ \| u \|_{H^{1}(\Omega, \R)}
\end{align*}
is true which implies $\| R_{\lambda} f \|_{H^{1}(\Omega, \R)} \leq c_{\lambda}^{-1}  \| f \|_{\mathrm{L}^{2}(\Omega, \R)}$. We define
a bounded operator
$$
\begin{array}{rl} 
S_{\lambda} \colon \mathrm{L}^{2}(\Omega, \R) & \to  H^{1}(\Omega, \R) \\
         f & \mapsto R_{\lambda} f.
\end{array}
$$ 
The Rellich-Kondrachov embedding theorems state that the identity operator $\operatorname{Id}$ is a compact embedding 
from $ H^{1}(\Omega, \R)$ into $\mathrm{L}^{2}(\Omega, \R)$. So $R_{\lambda} = \operatorname{Id} \circ S_{\lambda}$
is a compact operator in $\mathrm{L}^{2}(\Omega, \R)$.\\
\end{proof}

Next we show that the spectral radius $\rho(R_{\lambda})$ of $R_{\lambda}$ is strictly positive. Please note that 
$R_{\lambda}$ being a compact operator and $\mathrm{L}^{2}(\Omega, \R)$ being infinite-dimensional implies 
$0 \in \sigma(R_{\lambda})$. So the spectrum of $R_{\lambda}$ is not empty and hence it is reasonable to study the
spectral radius $\rho(R_{\lambda})$ of $R_{\lambda}$ which is defined as
$$
\rho(R_{\lambda}) = \sup \left\{ |\mu| \ : \ \mu \in \sigma(R_{\lambda}) \right\}.
$$
Furthermore $\rho(R_{\lambda})$ is further characterized by 
$\rho(R_{\lambda}) = \lim_{n \to \infty} \| R_{\lambda}^{n} \|_{\mathrm{L}^{2} \to \mathrm{L}^{2}}^{1/n}$.\\

\begin{lem}\label{1_Neumann_eigenvalue} 
\begin{enumerate}[i)] Let $\lambda > 0$ be sufficiently large with
$\lambda \in \rho\left( -L(A,0) \right) \cap \rho\left( -L(A,-C) \right)$.
\item Then ${\bf 1}$ is an eigenfunction to the eigenvalue $\lambda^{-1} > 0$ of the Neumann resolvent 
$$
R_{\lambda, 0} = \left( \lambda + L(A,0) \right)^{-1} \in \mathcal{L} \left( \mathrm{L}^{2}(\Omega) \right).
$$
Hence $\rho(R_{\lambda, 0}) \geq \lambda^{-1}$ is implied.
\item Furthermore $0 < \lambda^{-1} \leq \rho(R_{\lambda, 0}) \leq \rho(R_{\lambda})$ is true.\\
\end{enumerate}
\end{lem}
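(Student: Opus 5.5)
For part i) I will verify directly that ${\bf 1}\in D(L(A,0))$ with $L(A,0){\bf 1}=0$. Since ${\bf 1}\in H^{1}(\Omega)$ and $\nabla{\bf 1}=0$, we get
$$
a(A,0)({\bf 1},\varphi)=\int_{\Omega}A\nabla{\bf 1}\cdot\overline{\nabla\varphi}\d x=0=\langle 0,\varphi\rangle_{\mathrm{L}^{2}(\Omega)}
$$
for every $\varphi\in H^{1}(\Omega)$. By the definition of the associated operator, together with Proposition \ref{Semigroup_adjustment} i), this gives ${\bf 1}\in D(L(A,0))$ and $L(A,0){\bf 1}=0$. Hence $(\lambda+L(A,0)){\bf 1}=\lambda{\bf 1}$. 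Applying $R_{\lambda,0}$, which is well defined because $\lambda\in\rho(-L(A,0))$, yields $R_{\lambda,0}{\bf 1}=\lambda^{-1}{\bf 1}$. Since ${\bf 1}\neq0$ (as $\Omega$ is non-empty), $\lambda^{-1}$ is an eigenvalue of $R_{\lambda,0}$, so $\lambda^{-1}\in\sigma(R_{\lambda,0})$ and therefore $\rho(R_{\lambda,0})\geq\lambda^{-1}>0$.

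For part ii) I will compare powers of the two resolvents using Lemma \ref{resolvent_comparison}. That lemma gives $0\leq R_{\lambda,0}u\leq R_{\lambda}u$ for every $0\leq u\in\mathrm{L}^{2}(\Omega)$. By induction on $n$, using the positivity of both resolvents, I will show $0\leq R_{\lambda,0}^{n}u\leq R_{\lambda}^{n}u$ for positive $u$:
$$
R_{\lambda,0}^{n+1}u=R_{\lambda,0}(R_{\lambda,0}^{n}u)\leq R_{\lambda}(R_{\lambda,0}^{n}u)\leq R_{\lambda}(R_{\lambda}^{n}u).
$$
The first inequality is Lemma \ref{resolvent_comparison} applied to the positive function $R_{\lambda,0}^{n}u$. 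The second uses the positivity of $R_{\lambda}$ and the induction hypothesis.

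The simplest way to conclude avoids norms of general operators. Applying the power comparison to $u={\bf 1}$ together with part i) gives
$$
\lambda^{-n}{\bf 1}=R_{\lambda,0}^{n}{\bf 1}\leq R_{\lambda}^{n}{\bf 1}.
$$
Since $\Omega$ is bounded, ${\bf 1}\in\mathrm{L}^{2}(\Omega)$, and monotonicity of the $\mathrm{L}^{2}$-norm on positive functions yields
$$
\lambda^{-n}\|{\bf 1}\|_{\mathrm{L}^{2}(\Omega)}\leq\|R_{\lambda}^{n}\|_{\mathrm{L}^{2}\to\mathrm{L}^{2}}\,\|{\bf 1}\|_{\mathrm{L}^{2}(\Omega)}.
$$
Taking $n$-th roots and using Gelfand's formula $\rho(R_{\lambda})=\lim_{n\to\infty}\|R_{\lambda}^{n}\|^{1/n}$ gives $\rho(R_{\lambda})\geq\lambda^{-1}$.

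This does not yet give $\rho(R_{\lambda,0})\leq\rho(R_{\lambda})$ when $\rho(R_{\lambda,0})$ exceeds $\lambda^{-1}$, so I will handle that inequality separately. Domination $|R_{\lambda,0}^{n}u|\leq R_{\lambda,0}^{n}|u|\leq R_{\lambda}^{n}|u|$ gives $\|R_{\lambda,0}^{n}\|\leq\|R_{\lambda}^{n}\|$ in the lattice $\mathrm{L}^{2}$, and Gelfand's formula then gives $\rho(R_{\lambda,0})\leq\rho(R_{\lambda})$. Alternatively, $R_{\lambda,0}$ is self-adjoint only if $A$ is symmetric, but one can show $\rho(R_{\lambda,0})=\lambda^{-1}$ since $\Re a(A,0)\geq0$ makes $\|R_{\lambda,0}\|\leq\lambda^{-1}$. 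The norm domination step, namely passing from the pointwise inequality to operator norms for complex functions, is the only delicate point, and it is handled by the positivity inequality $|Tu|\leq T|u|$ for positive operators.
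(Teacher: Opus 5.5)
Your proof is correct and follows the paper's argument. Part i) is the same direct verification that ${\bf 1}\in D(L(A,0))$ with $L(A,0){\bf 1}=0$, and part ii) compares the powers $0\leq R_{\lambda,0}^{n}\leq R_{\lambda}^{n}$ via Lemma~\ref{resolvent_comparison} and then applies Gelfand's formula, as the paper does. You additionally spell out the induction and the norm monotonicity $\|R_{\lambda,0}^{n}\|\leq\|R_{\lambda}^{n}\|$ (via $|Tu|\leq T|u|$), which the paper leaves implicit. Your side remark is also correct: accretivity of $a(A,0)$ gives $\|R_{\lambda,0}\|\leq\lambda^{-1}$, hence $\rho(R_{\lambda,0})=\lambda^{-1}$.
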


\begin{proof}
\begin{enumerate}[i)]
\item For every $v \in H^{1}(\Omega)$ we infer
$$
a(A, 0)({\bf 1}, v) = \int_{\Omega} A \nabla {\bf 1} \cdot \overline{\nabla v} \d x = 0 =\langle {\bf 0}, v \rangle_{\mathrm{L}^{2}(\Omega)}.
$$
Hence ${\bf 1} \in D\left( L(A, 0) \right)$ and $L(A, 0) {\bf 1} = 0$ is true almost everywhere in $\Omega$. We conclude
that 
$$
\left( \lambda + L(A,0) \right) {\bf 1} = \lambda {\bf 1}
$$ 
is true which implies $R_{\lambda, 0}  {\bf 1} = \lambda^{-1}  {\bf 1}$.\\
\item In Lemma \ref{resolvent_comparison} we have already seen $0 \leq R_{\lambda, 0} \leq R_{\lambda}$. 
For any $n \in \N$ we infer that
$$
0 \leq R_{\lambda, 0}^{n} \leq R_{\lambda}^{n}
$$ 
is true and therefore
$$
0 < \lambda^{-1} \leq \rho(R_{\lambda, 0}) = \lim_{n \to \infty} \| R_{\lambda, 0}^{n} \|^{1/n} \leq 
\lim_{n \to \infty} \| R_{\lambda}^{n} \|^{1/n} = \rho(R_{\lambda}) 
$$
proves the claim.
\end{enumerate}
\end{proof}

\begin{thm}\label{mu_eigenvalue}
There exists a positive eigenfunction $0 \leq \phi \in D\left( L(A, -C) \right)$ of  $L(A, -C)$ to an eigenvalue $\mu \in (-\infty,0]$, i.e.
$$
L(A, -C) \phi = \mu \phi.
$$
\end{thm}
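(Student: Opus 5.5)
We apply the Krein--Rutman theorem to $R_{\lambda}=(\lambda+L(A,-C))^{-1}$ on the real Banach space $\mathrm{L}^{2}(\Omega,\R)$ with the total cone $\mathcal{K}$ of positive functions, and then translate the resulting eigenpair of $R_{\lambda}$ into an eigenpair of $L(A,-C)$.

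Fix $\lambda>0$ as in the preceding lemmas. First we check that $R_{\lambda}$ maps real functions to real functions: the coefficients $a_{ij}$ are real and $C$ is positive, hence real. So $\overline{a(A,-C)(u,v)}=a(A,-C)(\overline u,\overline v)$, and therefore $\overline{R_{\lambda}f}=R_{\lambda}\overline f$. Thus $R_{\lambda}$ restricts to a bounded operator on $\mathrm{L}^{2}(\Omega,\R)$. By the previous lemma this restriction is positive and compact, and $\mathcal{K}$ is invariant under it. Lemma \ref{1_Neumann_eigenvalue} gives $r:=\rho(R_{\lambda})\geq\lambda^{-1}>0$. One point needs care: the spectral radius of the real restriction must agree with that of the complex operator. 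Since the complex operator is the complexification of the real one and the operator norms are comparable, Gelfand's formula gives the same value. Alternatively, Lemma \ref{resolvent_comparison} can be applied directly on real functions.

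The Krein--Rutman theorem now yields $0\neq\phi\in\mathcal{K}$ with $R_{\lambda}\phi=r\phi$. Since $\phi=r^{-1}R_{\lambda}\phi$ lies in the range of $R_{\lambda}$, we have $\phi\in D(L(A,-C))$. Applying $\lambda+L(A,-C)$ to the eigenvalue equation gives $\phi=r(\lambda+L(A,-C))\phi$, that is,
$$
L(A,-C)\phi=\mu\phi,\qquad \mu:=r^{-1}-\lambda .
$$

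The remaining step is the sign of $\mu$. This follows from $r\geq\lambda^{-1}$, equivalently $r^{-1}\leq\lambda$, which gives $\mu\leq0$; the real-valuedness of $\mu$ is automatic. An independent check is to test the eigenvalue equation with ${\bf 1}$:
$$
\mu\int_{\Omega}\phi\d x=a(A,-C)(\phi,{\bf 1})=-\int_{\partial\Omega}C\gamma(\phi)\d\sigma\leq0,
$$
and since $\int_{\Omega}\phi\d x>0$ for the nonzero positive function $\phi$, this gives $\mu\leq0$ again.

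The only genuinely delicate step is the real versus complex setting for Krein--Rutman, namely the real-preserving property and the equality of spectral radii. The rest is bookkeeping.
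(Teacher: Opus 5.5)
Your proposal is correct and follows the paper's proof: you apply Krein--Rutman to the positive compact resolvent $R_{\lambda}$ on $\mathrm{L}^{2}(\Omega,\R)$, use $\rho(R_{\lambda})\geq\lambda^{-1}$ from Lemma \ref{1_Neumann_eigenvalue}, and conclude $L(A,-C)\phi=\mu\phi$ with $\mu=\rho(R_{\lambda})^{-1}-\lambda\leq 0$. Two of your steps are refinements the paper leaves implicit, and both are valid: the check that $R_{\lambda}$ preserves real functions with an unchanged spectral radius, and the independent sign check obtained by testing the eigenvalue equation with ${\bf 1}$.
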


\begin{proof}
The resolvent $R_{\lambda}$ is a positive and compact operator with $0 < \rho(R_{\lambda})$. Hence by Krein-Rutman Theorem
the spectral radius $\rho(R_{\lambda})$ is an eigenvalue of $R_{\lambda}$ with an eigenfunction 
$0 \neq \phi \in \mathrm{L}^{2}(\Omega, \R)$. Hence
$$
\phi = \rho(R_{\lambda})^{-1} R_{\lambda}\phi \geq 0
$$
holds almost everywhere in $\Omega$. Furthermore $\phi$ is contained in $D\left( L(A, -C) \right)$ with
$$
L(A, -C) \phi = \mu \phi
$$
is true for $\mu = \rho(R_{\lambda})^{-1} - \lambda \leq 0$ by Lemma \ref{1_Neumann_eigenvalue}.\\
\end{proof}

\subsection{Upper bound of $\phi$}

Our aim is to prove that the positive eigenfunction $\phi$ obtained in
Subsection \ref{existence_phi} belongs to $L^\infty(\Omega)$. We derive this 
conclusion directly from the weak eigenvalue equation
$$
    L(A,-C)\phi=\mu\phi
$$
without using any additional elliptic regularity.\\
\ \\
The proof is based on an iteration of integrability exponents. We first
describe the main steps of the argument. The details are given in
Subsections \ref{Power_truncation_subsection} and \ref{iteration_subsection}. 
Throughout this subsection, we set
$$
    \kappa=\frac{d-1}{d-2}>1.
$$
\begin{enumerate}
\item For $p \geq 2$ and the choice of $\kappa$ the Sobolev embedding theorem yields  a constant $S>0$ satisfying
$$
\| \phi^{p/2} \|_{\mathrm{L}^{2\kappa}(\Omega)}^{2} \leq 
S \left( \left\| \nabla ( \phi^{p/2} ) \right\|_{\mathrm{L}^{2}(\Omega)^{d}}^{2} + 
\left\| \phi^{p/2} \right\|_{\mathrm{L}^{2}(\Omega)}^{2} \right).
$$
Using $\| \phi^{p/2} \|_{\mathrm{L}^{2\kappa}(\Omega)}^{2} = \| \phi \|_{\mathrm{L}^{p \kappa}(\Omega)}^{p}$ and
$\left\| \phi^{p/2} \right\|_{\mathrm{L}^{2}(\Omega)}^{2} = \left\| \phi \right\|_{\mathrm{L}^{p}(\Omega)}^{p}$ we obtain 
\begin{equation}
\| \phi \|_{\mathrm{L}^{p \kappa}(\Omega)}^{p} \leq 
S \left( \left\| \nabla ( \phi^{p/2} ) \right\|_{\mathrm{L}^{2}(\Omega)^{d}}^{2} + \left\| \phi \right\|_{\mathrm{L}^{p}(\Omega)}^{p} \right).
\end{equation}

\item Then Lemma \ref{truncation_lemma} gives 
$$
\left\| \nabla ( \phi^{p/2} ) \right\|_{\mathrm{L}^{2}(\Omega)^{d}}^{2} \ \leq \ Kp^{2} 
\left\| \phi \right\|_{\mathrm{L}^{p}(\Omega)}^{p}
$$
for a constant $K \geq 1$ and $p \geq 2$ which implies
$$
\| \phi \|_{\mathrm{L}^{p \kappa}(\Omega)}^{p} \leq \tilde{K}p^{2} \left\| \phi \right\|_{\mathrm{L}^{p}(\Omega)}^{p}.
$$

\item In Theorem \ref{iteration_thm} we apply this estimate repeatedly to prove $\phi \in \mathrm{L}^{\infty}(\Omega)$
by an iteration. We start by choosing $p_{0} = 2$ and define $p_{n+1}=\kappa p_{n}$. Then
$$
\| \phi \|_{\mathrm{L}^{p_{n}}(\Omega)} \ \leq \ \left( \tilde{K} p_{n-1}^{2} \right)^{1/p_{n-1}} 
\| \phi \|_{\mathrm{L}^{p_{n-1}}(\Omega)} \ = \ \| \phi \|_{\mathrm{L}^{2}(\Omega)} 
\prod_{j=0}^{n-1} \left( \tilde{K} p_{j}^{2} \right)^{1/p_{j}}
$$
is implied. Using the convergence of an infinite product on the right hand side gives
$$
\| \phi \|_{\mathrm{L}^{p_{n}}(\Omega)} \ \leq \ M \| \phi \|_{\mathrm{L}^{2}(\Omega)} 
$$
for a constant $M > 0$ and any $n \in \N$.

\item Let $a > M  \| \phi \|_{\mathrm{L}^{2}(\Omega)}$ be arbitrary and define $A = \left\{  x \in \Omega \ : \ \phi(x) > a \right\}$.
Then
$$
\left| A \right| = \int_{A} {\bf 1} \d x \leq a^{-p_{n}} \int_{A} \phi^{p_{n}} \d x 
\leq  a^{-p_{n}}  \| \phi \|_{\mathrm{L}^{p_{n}}(\Omega)}^{p_{n}}
\leq \left( \frac{M \| \phi \|_{\mathrm{L}^{2}(\Omega)}}{a} \right)^{p_{n}} \to 0
$$
for $n \to \infty$ implies $\left| A \right| = 0$. Hence $\phi \in L^\infty(\Omega)$ with
$$
0 \ \leq \ \phi \leq M  \| \phi \|_{\mathrm{L}^{2}(\Omega)}
$$
almost everywhere in $\Omega$.
\end{enumerate}

\subsubsection{An $H^1$-estimate for powers of $\phi$} \label{Power_truncation_subsection}

We first prove the $H^1$-estimate for powers of $\phi$ which is needed
in the iteration. The formal choice of $\phi^{p-1}$ as a test function
in the weak eigenvalue equation is not justified at this point.
Therefore, we approximate this function by power truncations.
We start our argumentation with a rather technical but useful proposition.\\

\begin{prp}\label{truncations_phi}
Let $n\in\mathbb N$, $p \geq 2$.  For $t > 0$, define
$$
F_{n}(t)=t(t\wedge n)^{p-2}, \quad G_{n}(t)=t(t\wedge n)^{(p-2)/2} 
$$
and set $F_{n}(0) = 0 = G_{n}(0)$. Then $F_{n} \circ \phi$ and $G_{n} \circ \phi$ are both contained in 
$H^{1}(\Omega)$ with
\begin{enumerate}[a)]
\item $\nabla \left( F_{n}( \phi ) \right) = F_{n}^{\prime}(\phi) \nabla \phi$ and
\item $\nabla \left( G_{n}( \phi ) \right) = G_{n}^{\prime}(\phi) \nabla \phi$
\end{enumerate}
almost everywhere in $\Omega$.
\end{prp}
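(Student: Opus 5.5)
The plan is to use the chain rule for Lipschitz functions of Sobolev functions. The key observation is that, for fixed $n$ and $p\geq 2$, both $F_n$ and $G_n$ (extended by $F_n(t)=G_n(t)=0$ for $t\leq 0$) are piecewise $C^1$ on $[0,\infty)$ with a single kink at $t=n$. They are also globally Lipschitz there. Indeed, for $0<t<n$ we have $F_n(t)=t^{p-1}$ with derivative $(p-1)t^{p-2}\leq (p-1)n^{p-2}$, and for $t>n$ we have $F_n(t)=n^{p-2}t$ with derivative $n^{p-2}$. Analogously, $G_n(t)=t^{p/2}$ on $(0,n)$ with derivative $\tfrac p2 t^{(p-2)/2}\leq \tfrac p2 n^{(p-2)/2}$, and $G_n(t)=n^{(p-2)/2}t$ for $t>n$. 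Continuity at $t=0$ holds since $p\geq 2$ (if $p=2$ both maps are the identity on $[0,\infty)$). Continuity at $t=n$ is immediate, and $F_n(0)=G_n(0)=0$.

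The natural tool is the chain rule for $G\circ u$, where $u\in H^1(\Omega)$ is real valued and $G\colon\R\to\R$ is Lipschitz and piecewise $C^1$ with finitely many corner points. It states that $G\circ u\in H^1(\Omega)$ and $\nabla(G\circ u)=G'(u)\nabla u$ a.e., where the value of $G'$ at corner points is irrelevant because $\nabla u=0$ a.e. on each level set $\{u=c\}$. To keep the argument self-contained, I would prove this by writing $F_n$ as a combination of truncations. For example,
$$F_n(t)=(t\wedge n)^{p-1}+n^{p-2}(t-n)^+ \qquad\text{for } t\geq 0,$$
and similarly
$$G_n(t)=(t\wedge n)^{p/2}+n^{(p-2)/2}(t-n)^+ .$$
The map $t\mapsto (t-n)^+$ is handled by the standard lattice property of $H^1(\Omega)$: $(\phi-n)^+\in H^1$ with gradient $1_{\{\phi>n\}}\nabla\phi$, since constants lie in $H^1(\Omega)$ on the bounded domain. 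Likewise $\phi\wedge n=\phi-(\phi-n)^+\in H^1$ with gradient $1_{\{\phi<n\}}\nabla\phi$, and this function takes values in $[0,n]$ because $\phi\geq0$ by Theorem \ref{mu_eigenvalue}. Then I would compose the bounded function $\phi\wedge n$ with a $C^1$ function $H$ that has bounded derivative on $[0,n]$ and $H(0)=0$: namely $H(s)=s^{p-1}$, resp. $s^{p/2}$, modified outside $[0,n]$ to be globally $C^1$ with bounded derivative. This is the classical chain rule for $C^1$ functions with bounded derivative (for instance Gilbarg–Trudinger, Lemma 7.5). It gives $H(\phi\wedge n)\in H^1$ with gradient $H'(\phi\wedge n)1_{\{\phi<n\}}\nabla\phi$.

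Adding the two pieces gives, a.e.,
$$\nabla F_n(\phi)=\bigl((p-1)\phi^{p-2}1_{\{\phi<n\}}+n^{p-2}1_{\{\phi>n\}}\bigr)\nabla\phi .$$
This equals $F_n'(\phi)\nabla\phi$ a.e., once we use that $\nabla\phi=0$ a.e. on $\{\phi=n\}$ (and on $\{\phi=0\}$), where $F_n'$ is undefined or ambiguous. The same computation gives b). Both statements rely on the real-valuedness and positivity of $\phi$ from Theorem \ref{mu_eigenvalue}.

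I expect the only delicate step to be the careful handling of the non-differentiability at $t=n$, and at $t=0$ when $2<p<3$ for $F_n$, where $t^{p-1}$ is $C^1$ but not $C^2$. This is resolved by the standard fact that $\nabla u=0$ a.e. on level sets, which itself follows from $\nabla u^+=1_{\{u>0\}}\nabla u$. All remaining steps are routine.
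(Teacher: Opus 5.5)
Your proof is correct, but it takes a more elementary route than the paper. The paper checks that $F_n$ (and likewise $G_n$) is globally Lipschitz on $[0,\infty)$ with constant $(p-1)n^{p-2}$ and extends it by $0$ to $(-\infty,0)$. It then applies the general chain rule for Lipschitz functions of real-valued Sobolev functions (Ziemer, Theorem 2.1.11) in one step.

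You instead decompose $F_n(t)=(t\wedge n)^{p-1}+n^{p-2}(t-n)^+$ and $G_n(t)=(t\wedge n)^{p/2}+n^{(p-2)/2}(t-n)^+$ on $[0,\infty)$. This way you only need the lattice property of $H^1(\Omega)$ and the chain rule for $C^1$ functions with bounded derivative, applied to the bounded function $\phi\wedge n$.

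The paper's argument is shorter and handles any Lipschitz truncation at once. However, it rests on a deeper theorem and leaves implicit what $F_n'(\phi)$ means at the kink $\phi=n$, and at $\phi=0$ when $p=2$.

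Your version is self-contained and yields the explicit formula $\nabla F_n(\phi)=\bigl((p-1)\phi^{p-2}1_{\{\phi<n\}}+n^{p-2}1_{\{\phi>n\}}\bigr)\nabla\phi$. It also makes explicit that $\nabla\phi=0$ a.e.\ on $\{\phi=n\}$. That fact is exactly what the later truncation lemma tacitly needs, since there the inequality $F_n'(t)\geq c_p G_n'(t)^2$ is only verified for $t\neq n$ before being integrated against $|\nabla\phi|^2$.
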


\begin{proof}
We will only prove the claim for $F_{n}( \phi )$ since the same arguments apply to  $G_{n}( \phi )$. 
Our argumentation is based on Theorem 2.1.11 on page 48 in \cite{Ziemer89} which states that
$$
\psi \circ u \in H^{1}(\Omega) \ \text{with} \ \partial_{j} (\psi \circ u) = \psi^{\prime}(u) \partial_{j} u
$$
holds almost everywhere in $\Omega$ for $\psi \colon \R \to \R$ being Lipschitz continuous and
$u \in H^{1}(\Omega)$ being real-valued.
\begin{enumerate}[i)]
\item We show that $F_{n}$ is Lipschitz continuous in $[0,\infty)$. For $t > n$ the derivative of $F_{n}$
at $t$ is given by $F_{n}^{\prime}(t) = n^{p - 2}$. For $0<t<n$ it is given by 
$$
F_{n}^{\prime}(t) = (p-1) t^{p - 2} \leq (p-1) n^{p - 2}. 
$$
Using the mean value theorem for $0 \leq s \leq t \leq n$ we infer that
$$
\left| F_{n}(t) - F_{n}(s) \right| \ \leq \ (p-1) n^{p - 2} |t-s|
$$
is true. Similarly we show Lipschitz continuity on $[n, \infty)$. Now let 
$$
0 \leq s \leq n \leq t < \infty
$$ 
with $s \neq t$ then
\begin{align*}
\left| F_{n}(t) - F_{n}(s) \right| & \leq  \left| F_{n}(t) - F_{n}(n) \right| + \left| F_{n}(n) - F_{n}(s) \right| \\
& \leq (p-1) n^{p - 2} (t-n) + (p-1) n^{p - 2} (n-s) = (p-1) n^{p - 2} (t-s)  
\end{align*}
is true and therefore $F_{n}$ is Lipschitz continuous on $[0,\infty)$.
\item Since $F_{n}(0) = 0$, we can extend $F_{n}$ continuously to the entire set $\R$ with $0$ on $(-\infty,0)$ 
without losing Lipschitz continuity.
\item The claim is a direct implication of the cited result above since $\phi \in H^{1}(\Omega)$ is positive.
\end{enumerate}
\end{proof}

Let us now focus on the central argument of this subsection. Notice that the formal choice of $v=\phi^{p-1}$
in the weak eigenvalue equation is not justified before one knows that the relevant power of $\phi$ is contained
in $H^{1}(\Omega)$. The following truncation argument supplies this missing step.\\

\begin{lem}\label{truncation_lemma}
Let $p \geq 2$ and $0 \leq \phi \in D( L(A,-C) )$ satisfy $L(A,-C)\phi = \mu \phi$. Assume in addition that  
\begin{enumerate}[a)]
\item $\phi \in \mathrm{L}^{p}(\Omega)$ and
\item $\gamma(\phi) \in \mathrm{L}^{p}(\partial \Omega)$
\end{enumerate}
are satisfied. Then $\phi^{p/2} \in H^{1}(\Omega)$. Moreover, there exists a constant $K \geq 1$, independent
of $p$, such that
\begin{equation}\label{power_truncation}
\left\| \nabla ( \phi^{p/2} ) \right\|_{\mathrm{L}^{2}(\Omega)^{d}}^{2} \ \leq \ Kp^{2} 
\left\| \phi \right\|_{\mathrm{L}^{p}(\Omega)}^{p}.
\end{equation}
\end{lem}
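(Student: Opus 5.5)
The plan is a Moser-type energy estimate. We test the weak eigenvalue equation with the truncated power $F_{n}(\phi)$ from Proposition \ref{truncations_phi}, control the boundary term using the $\mathrm{L}^{p}$-boundedness of $C$ (Lemma \ref{C_interpolation}) together with assumption b), and then let $n \to \infty$. Finally we convert the boundary norm back into an interior norm with the trace inequality of Lemma \ref{gamma}, choosing $\varepsilon \sim 1/p$. This choice is what produces the factor $p^{2}$.

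\emph{Step 1 (testing).} By Proposition \ref{truncations_phi}, $F_{n}(\phi) \in H^{1}(\Omega)$ with $\nabla F_{n}(\phi) = F_{n}'(\phi)\nabla\phi$, and $F_{n}(\phi)$ is real and positive. Inserting it into $a(A,-C)(\phi,v) = \mu\langle \phi, v\rangle$ and using $\mu \leq 0$ (Theorem \ref{mu_eigenvalue}), $\phi F_{n}(\phi) \geq 0$ and uniform ellipticity gives
$$
\alpha \int_{\Omega} F_{n}'(\phi)|\nabla\phi|^{2} \d x \ \leq \ \int_{\partial\Omega} C\gamma(\phi)\, F_{n}(\gamma(\phi)) \d\sigma .
$$
Here we use $\gamma(F_{n}(\phi)) = F_{n}(\gamma(\phi))$, which holds since $F_{n}$ is Lipschitz with $F_{n}(0)=0$. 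On the left we compare with $G_{n}$. For $t<n$ we have $G_{n}'(t)^{2} = \tfrac{p^{2}}{4}t^{p-2} \leq \tfrac{p}{2}(p-1)t^{p-2} = \tfrac p2 F_{n}'(t)$, and for $t>n$ we have $G_{n}'^{2} = F_{n}'$. Hence $|\nabla G_{n}(\phi)|^{2} \leq p\, F_{n}'(\phi)|\nabla\phi|^{2}$. On the right we use $0 \leq F_{n}(s) \leq s^{p-1}$, Hölder's inequality and Lemma \ref{C_interpolation} with $c := \max\{\|C\|_{\mathrm{L}^{2}\to\mathrm{L}^{2}}, \|C{\bf 1}\|_{\mathrm{L}^{\infty}}\}$, which is independent of $p$:
$$
\|\nabla G_{n}(\phi)\|_{\mathrm{L}^{2}}^{2} \ \leq \ \frac{p}{\alpha}\, \|C\gamma(\phi)\|_{\mathrm{L}^{p}(\partial\Omega)} \|\gamma(\phi)\|_{\mathrm{L}^{p}(\partial\Omega)}^{p-1} \ \leq \ \frac{pc}{\alpha}\, \|\gamma(\phi)\|_{\mathrm{L}^{p}(\partial\Omega)}^{p}.
$$
The right-hand side is finite by b) and uniform in $n$.

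\emph{Step 2 (limit $n\to\infty$).} We have $G_{n}(\phi) \uparrow \phi^{p/2}$ pointwise, and $G_{n}(\phi)\leq\phi^{p/2}$ is bounded in $\mathrm{L}^{2}$ by a). Combined with the uniform gradient bound, $(G_{n}(\phi))$ is bounded in $H^{1}(\Omega)$. A weakly convergent subsequence must have limit $\phi^{p/2}$, so $\phi^{p/2}\in H^{1}(\Omega)$. The same estimate passes to the limit by weak lower semicontinuity of the norm. Continuity of the trace under weak convergence (the trace is compact, or one can argue via a.e. limits) gives $\gamma(\phi^{p/2}) = \gamma(\phi)^{p/2}$. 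Therefore
$$
\|\gamma(\phi)\|_{\mathrm{L}^{p}(\partial\Omega)}^{p} = \|\gamma(\phi^{p/2})\|_{\mathrm{L}^{2}(\partial\Omega)}^{2}.
$$

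\emph{Step 3 (absorption).} Apply the second part of Lemma \ref{gamma} to $u=\phi^{p/2}$ with $\varepsilon = \alpha/(2pc)$. This gives
$$
\|\nabla\phi^{p/2}\|^{2} \ \leq \ \tfrac12\|\nabla\phi^{p/2}\|^{2} + \tfrac{pc}{\alpha}\,k\Big(1+\tfrac{2pc}{\alpha}\Big)\|\phi\|_{\mathrm{L}^{p}}^{p}.
$$
Since the left side is finite by Step 2, absorbing the first term yields $\|\nabla\phi^{p/2}\|^{2} \leq Kp^{2}\|\phi\|_{\mathrm{L}^{p}}^{p}$ for $p\geq2$. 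Here $K \geq 1$ depends only on $\alpha, c, k$.

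The main obstacle I expect is Step 2: rigorously justifying $\phi^{p/2}\in H^{1}$ and identifying its trace as $\gamma(\phi)^{p/2}$. The absorption in Step 3 is only legitimate once $\|\nabla \phi^{p/2}\|$ is known to be finite. This is exactly why the boundary bound in Step 1 is taken with the untruncated quantity $\|\gamma(\phi)\|_{p}^{p}$, guaranteed finite by hypothesis b), rather than absorbed at the truncated level.
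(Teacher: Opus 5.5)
Your proposal is correct and follows essentially the same route as the paper. You test the weak eigenvalue equation with $F_{n}(\phi)$, bound $|\nabla G_{n}(\phi)|^{2}$ by a multiple of $F_{n}'(\phi)|\nabla\phi|^{2}$, control the boundary term by H\"older's inequality and the $\mathrm{L}^{p}$-bound for $C$, pass to the limit by weak compactness together with the identification $\gamma(\phi^{p/2})=\gamma(\phi)^{p/2}$, and absorb via Lemma \ref{gamma} with $\varepsilon\sim 1/p$. The differences are cosmetic: you drop the $\mu$-term using $\mu\leq 0$ (which holds for any nonzero positive eigenfunction, by testing with ${\bf 1}$), whereas the paper keeps $|\mu|\,\|\phi\|_{\mathrm{L}^{p}(\Omega)}^{p}$; and you use the cruder factor $p$ where the paper uses $c_{p}^{-1}=p^{2}/(4(p-1))$.
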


\begin{proof}
For $n \in \N$, let $F_{n}$ and $G_{n}$ be the power truncations of Proposition \ref{truncations_phi}.
We define 
\begin{enumerate}[1)]
\item $v_{n} = F_{n} \circ \phi = \phi \phi_{n}^{p-2} \in H^{1}(\Omega)$ and  
\item $w_{n} = G_{n} \circ \phi = \phi \phi_{n}^{(p-2)/2} \in H^{1}(\Omega)$ 
\end{enumerate}
with $\phi_{n} = \min\{\phi, n\}$. The following proof is divided into several smaller steps for a better understanding.

\begin{enumerate}[i)]
\item Notice that $w_{n}$ converges pointwise to $\phi^{p/2}$ almost everywhere and 
$$
0 \ \leq \ w_{n}^{2} \ \leq \ \phi^{p}
$$
holds almost everywhere in $\Omega$ with $\phi^{p} \in \mathrm{L}^{1}(\Omega)$. Hence the dominated convergence 
theorem gives a strong convergence of 
$(w_{n})_{n \in \N}$ to $\phi^{p/2}$ in $\mathrm{L}^{2}(\Omega)$.\\

\item In the following argumentation we prove that $(w_{n})_{n \in \N}$ is a bounded sequence in $H^{1}(\Omega)$. 
In that case, there exists a subsequence of $(w_{n})_{n \in \N}$, which we do not relabel, and a function $w \in H^{1}(\Omega)$ 
such that
$$
w_{n} \ \rightharpoonup \ w  
$$
in $H^{1}(\Omega)$. Since $w_{n} \to \phi^{p/2}$ strongly in $\mathrm{L}^{2}(\Omega)$, it follows
$$
\phi^{p/2} = w \in H^{1}(\Omega).
$$

\item Set 
$$
c_{p} = \frac{4(p-1)}{p^{2}} \in (0,1].
$$
For every $t > 0$ with $t \neq n$ we conclude that
$$
F_{n}^{\prime}(t) \ \geq \ c_{p} \left( G_{n}^{\prime}(t) \right)^{2}
$$
is true. Notice that $v_{n}$ are contained in $H^{1}(\Omega)$ and Proposition \ref{truncations_phi}
implies
\begin{align*}
\Re \int_{\Omega} A \nabla \phi \cdot \nabla v_{n} \d x & \geq \alpha  \int_{\Omega} F_{n}^{\prime}(\phi) | \nabla \phi |^{2} \d x \\
& \geq \alpha c_{p} \int_{\Omega} \left( G_{n}^{\prime}(\phi) \right)^{2} | \nabla \phi |^{2} \d x \\
& = \alpha c_{p} \int_{\Omega}  | \nabla w_{n} |^{2} \d x 
\end{align*}
The weak eigenvalue equation is given by
\begin{equation}
\int_{\Omega} A \nabla \phi \cdot \overline{\nabla v} \d x - \langle C \gamma(\phi), \gamma(v) \rangle_{\mathrm{L}^{2}(\partial \Omega)}
= \langle L(A,-C)\phi, v \rangle_{\mathrm{L}^{2}(\Omega)} = \mu \int_{\Omega} \phi \overline{v} \d x
\end{equation}
for every $v \in H^{1}(\Omega)$. Using $v_{n}$ as a test function results in
\begin{equation}\label{weak_eigenvalue_result_v_n}
\alpha c_{p}  \| \nabla w_{n} \|_{\mathrm{L}^{2}(\Omega)^{d}}^{2} \leq |\mu| \int_{\Omega} \phi^{2} \phi_{n}^{p-2} \d x
+ \left| \langle C \gamma(\phi), \gamma(v_{n}) \rangle_{\mathrm{L}^{2}(\partial \Omega)} \right|.
\end{equation}

\item Let us focus on the term $\left| \langle C \gamma(\phi), \gamma(v_{n}) \rangle_{\mathrm{L}^{2}(\partial \Omega)} \right|$ next. 
We use Theorem A.2 in \cite{MaggiVillani2005} to argue that
$$
\gamma(v_{n}) = \gamma (F_{n}(\phi)) = F_{n}(\gamma(\phi)) = \gamma(\phi) (\gamma(\phi) \wedge n)^{p-2}
$$
is true. Define $h = \gamma(\phi)$ for further use simply for aesthetic reasons. 
Notice that 
$$
0 \leq h(h \wedge n)^{p-2} \leq h^{p-1}
$$ 
holds pointwise and for $p' = p/(p-1)$ we conclude to
$$
\| h(h \wedge n)^{p-2} \|_{\mathrm{L}^{p'}(\partial \Omega)} \leq \| h^{p-1} \|_{\mathrm{L}^{p'}(\partial \Omega)} 
= \| h \|_{\mathrm{L}^{p}(\partial \Omega)}^{p-1}.
$$
By Lemma \ref{C_interpolation} there exists a constant $M_{C} > 0$ such that
$$
\left\| C \right\|_{\mathrm{L}^{p} \to \mathrm{L}^{p}} \ \leq \ M_{C}
$$
is true for every $p \in [2,\infty]$. Hence we use Hölder's inequality to conclude to
$$
\left| \langle C \gamma(\phi), \gamma(v_{n}) \rangle_{\mathrm{L}^{2}(\partial \Omega)} \right| \ \leq \ M_{C} 
\| h \|_{\mathrm{L}^{p}(\partial \Omega)}^{p}.
$$

\item Also $ \phi^{2} \phi_{n}^{p-2} \leq \phi^{p}$ pointwise almost everywhere in $\Omega$ and therefore
(\ref{weak_eigenvalue_result_v_n}) implies
\begin{equation}\label{weak_eigenvalue_result_w_n}
\alpha c_{p}  \| \nabla w_{n} \|_{\mathrm{L}^{2}(\Omega)^{d}}^{2} \leq |\mu| \| \phi \|_{\mathrm{L}^{p}( \Omega)}^{p}
+ M_{C} \| h \|_{\mathrm{L}^{p}(\partial \Omega)}^{p}.
\end{equation}
Also 
$$
\| w_{n} \|_{\mathrm{L}^{2}( \Omega)}^{2} = \int_{\Omega} \phi^{2} \phi_{n}^{p-2} \d x \leq 
\| \phi \|_{\mathrm{L}^{p}( \Omega)}^{p}
$$
is true. Hence $(w_{n})_{n \in \N}$ is a bounded sequence in $H^{1}(\Omega)$. By i) and ii) we conclude that 
$\phi^{p/2}$ is contained in $H^{1}(\Omega)$. Furthermore, $w_{n}$ converges weakly to $\phi^{p/2}$ in $H^{1}(\Omega)$
but strongly in $\mathrm{L}^{2}( \Omega)$. This implies a weak convergence of $\nabla w_{n}$ to $\nabla \phi^{p/2}$
in $\mathrm{L}^2(\Omega)^{d}$. Proposition 3.5 (iii) on page 58 in \cite{Brezis11} states:
$$
\| \nabla \phi^{p/2} \|_{\mathrm{L}^2(\Omega)^{d}}^{2} \ \leq \ \liminf_{n \to \infty}  \| \nabla w_{n} \|_{\mathrm{L}^2(\Omega)^{d}}^{2}.
$$
Passing to the limit in $(\ref{weak_eigenvalue_result_w_n})$, we obtain 
$$
\alpha c_{p} \| \nabla \phi^{p/2} \|_{\mathrm{L}^2(\Omega)^{d}}^{2} \ \leq \ |\mu| \| \phi^{p/2} \|_{\mathrm{L}^{2}( \Omega)}^{2}
+ M_{C} \| \gamma(\phi)^{p/2} \|_{\mathrm{L}^{2}(\partial \Omega)}^{2}
$$
where we remember $h =  \gamma(\phi)$.\\

\item We show $h^{p/2} = \gamma(\phi^{p/2})$ next. Along the subsequence chosen in ii), $w_{n}$ converges weakly to $\phi^{p/2}$ in 
$H^{1}(\Omega)$. We use the adjoint trace operator $\gamma^{\ast} \colon \mathrm{L}^{2}(\partial \Omega) \to H^{1}(\Omega)$
with 
$$
\langle \gamma(w_{n}), g \rangle_{\mathrm{L}^{2}(\partial \Omega)} = \langle w_{n}, \gamma^{\ast}(g) \rangle_{H^{1}(\Omega)}
$$
for any $g \in \mathrm{L}^{2}(\partial \Omega)$, to argue that $\gamma(w_{n})$ converges weakly to $\gamma(\phi^{p/2})$
in $\mathrm{L}^{2}(\partial \Omega)$. \\
On the other hand, we use Theorem A.2 in \cite{MaggiVillani2005} once again, similar to our argumentation referring 
$\gamma(v_{n})$, to argue that
$$
\gamma(w_{n}) = h(h \wedge n)^{(p-2)/2}
$$
is true. Then $h(h \wedge n)^{(p-2)/2} \to h^{p/2}$ pointwise on $\partial \Omega$ for $n \to \infty$ and
$$
\gamma(w_{n})^{2} = h^{2}(h \wedge n)^{(p-2)} \leq h^{p} \in \mathrm{L}^{1}(\partial \Omega).
$$ 
Once again we use the dominated convergence theorem to conclude that $\gamma(w_{n})$ converges strongly to $h^{p/2}$
in $\mathrm{L}^{2}(\partial \Omega)$ which finally implies 
$$
\gamma(\phi^{p/2}) = h^{p/2}
$$
by the uniqueness of a weak limit.\\

\item Combining parts v) and vi), we obtain
\begin{equation}\label{almost_claim}
\alpha c_{p} \| \nabla \phi^{p/2} \|_{\mathrm{L}^2(\Omega)^{d}}^{2} \ \leq \ |\mu| \| \phi^{p/2} \|_{\mathrm{L}^{2}( \Omega)}^{2}
+ M_{C} \| \gamma(\phi^{p/2}) \|_{\mathrm{L}^{2}(\partial \Omega)}^{2}.
\end{equation}
By Lemma \ref{gamma} there exists a constant $k > 0$ such that
$$
\| \gamma(\phi^{p/2}) \|_{\mathrm{L}^{2}( \partial \Omega)}^{2} \ \leq \ \varepsilon  \| \nabla \phi^{p/2} \|_{\mathrm{L}^2(\Omega)^{d}}^{2}
+ k (1+\varepsilon^{-1}) \| \phi^{p/2} \|_{\mathrm{L}^{2}(\Omega)}^{2}
$$ 
is true for every $\varepsilon > 0$. Choosing $\varepsilon = \alpha c_{p}/(2M_{C})$ we rewrite (\ref{almost_claim}) to
$$
 \| \nabla \phi^{p/2} \|_{\mathrm{L}^2(\Omega)^{d}}^{2} \ \leq \ K_{0} (c_{p}^{-1} + c_{p}^{-2}) 
\| \phi^{p/2} \|_{\mathrm{L}^{2}(\Omega)}^{2} \ = \ K_{0} (c_{p}^{-1} + c_{p}^{-2}) 
\| \phi \|_{\mathrm{L}^{p}(\Omega)}^{p}
$$
for a constant $K_{0}$ independent of $p$. Finally, for $p \geq 2$ we conclude that $c_{p}^{-1} \leq p/2$ and therefore
$$
\| \nabla \phi^{p/2} \|_{\mathrm{L}^2(\Omega)^{d}}^{2} \ \leq \ \frac{3}{4} K_{0} p^{2} \| \phi \|_{\mathrm{L}^{p}(\Omega)}^{p}
$$
is true which proves the claim.
\end{enumerate}
\end{proof}

\begin{cor}\label{trace_iteration}
Let $p \geq 2$ and $0 \leq \phi \in D( L(A,-C) )$ satisfy  
\begin{enumerate}[a)]
\item $L(A,-C)\phi = \mu \phi$,
\item $\phi \in \mathrm{L}^{p}(\Omega)$ and
\item $\gamma(\phi) \in \mathrm{L}^{p}(\partial \Omega)$.
\end{enumerate}
Then $\gamma(\phi)$ is contained in $\mathrm{L}^{\kappa p}(\partial \Omega)$.
\end{cor}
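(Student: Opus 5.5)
By Lemma \ref{truncation_lemma}, under the hypotheses a)--c) the function $\phi^{p/2}$ lies in $H^{1}(\Omega)$, and part vi) of its proof shows $\gamma(\phi^{p/2}) = \gamma(\phi)^{p/2}$. The plan is therefore to apply a trace Sobolev embedding to $\phi^{p/2}$ and then read off the integrability exponent of $\gamma(\phi)$.

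The analytic input is the trace theorem on bounded Lipschitz domains for $d>2$:
$$
\gamma \colon H^{1}(\Omega) \to \mathrm{L}^{q}(\partial\Omega), \qquad q = \frac{2(d-1)}{d-2} = 2\kappa ,
$$
is bounded. This is the endpoint case of $H^{1}(\Omega)\hookrightarrow H^{1/2}(\partial\Omega)\hookrightarrow \mathrm{L}^{2(d-1)/(d-2)}(\partial\Omega)$, since $\partial\Omega$ is a $(d-1)$-dimensional Lipschitz manifold and Sobolev for $H^{1/2}$ in dimension $d-1$ gives exponent $2(d-1)/(d-2)$. This explains the choice $\kappa = (d-1)/(d-2)$ made above. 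So there is a constant $S_{\partial}>0$ with
$$
\| \gamma(u) \|_{\mathrm{L}^{2\kappa}(\partial\Omega)}^{2} \leq S_{\partial} \| u \|_{H^{1}(\Omega)}^{2}
$$
for every $u\in H^{1}(\Omega)$. I would cite this from a standard reference (e.g.\ Nečas or Grisvard) rather than prove it.

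Applying this with $u=\phi^{p/2}$ gives
$$
\int_{\partial\Omega} \gamma(\phi)^{p\kappa} \d\sigma = \| \gamma(\phi)^{p/2} \|_{\mathrm{L}^{2\kappa}(\partial\Omega)}^{2\kappa} = \| \gamma(\phi^{p/2}) \|_{\mathrm{L}^{2\kappa}(\partial\Omega)}^{2\kappa} \leq \left( S_{\partial} \| \phi^{p/2}\|_{H^{1}(\Omega)}^{2}\right)^{\kappa} < \infty .
$$
Hence $\gamma(\phi)\in \mathrm{L}^{\kappa p}(\partial\Omega)$. Combining this with \eqref{power_truncation} and $\|\phi^{p/2}\|_{\mathrm{L}^2(\Omega)}^2=\|\phi\|_{\mathrm{L}^p(\Omega)}^p$ also yields the quantitative bound
$$
\|\gamma(\phi)\|_{\mathrm{L}^{\kappa p}(\partial\Omega)}^{p}\leq S_{\partial}(Kp^{2}+1)\|\phi\|_{\mathrm{L}^{p}(\Omega)}^{p},
$$
which will be useful in the iteration.

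The main obstacle is justifying the identity $\gamma(\phi^{p/2})=\gamma(\phi)^{p/2}$ and the membership $\phi^{p/2}\in H^1(\Omega)$. Both were, however, already established inside the proof of Lemma \ref{truncation_lemma}: the membership through weak limits of the $w_n$, and the identity through the trace chain rule of Maggi--Villani together with dominated convergence on $\partial\Omega$. I would therefore simply refer back to steps ii) and vi) there. The only remaining point is the availability of the sharp trace exponent $2\kappa$ for merely Lipschitz boundaries, which is classical.
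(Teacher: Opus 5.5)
Your proposal is correct and follows the paper's proof essentially verbatim. You apply the sharp trace embedding into $\mathrm{L}^{2\kappa}(\partial\Omega)$ to $\phi^{p/2}\in H^{1}(\Omega)$ from Lemma \ref{truncation_lemma}, use $\gamma(\phi^{p/2})=\gamma(\phi)^{p/2}$ from step vi) of its proof, and arrive at the same bound $S(Kp^{2}+1)\|\phi\|_{\mathrm{L}^{p}(\Omega)}^{p}$. The only cosmetic difference is the form of the trace embedding: the paper quotes Theorem A.3 of \cite{MaggiVillani2005} as $C_{\Omega}\bigl(\|u\|_{\mathrm{L}^{1}(\Omega)}+\|\nabla u\|_{\mathrm{L}^{2}(\Omega)^{d}}\bigr)$ and then bounds the $\mathrm{L}^{1}$-norm by $|\Omega|^{1/2}$ times the $\mathrm{L}^{2}$-norm, whereas you use the $H^{1}$-version directly.
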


\begin{proof}
We apply Theorem A.3 in \cite{MaggiVillani2005} for $q=2$. Since
$q^{\sharp} = \frac{2(d-1)}{d-2} = 2 \kappa$ we obtain
$$
\| \gamma(\phi^{p/2}) \|_{\mathrm{L}^{2 \kappa}(\partial \Omega)} \ \leq \ C_{\Omega} 
\left( \| \phi^{p/2} \|_{\mathrm{L}^{1}( \Omega)} +   \| \nabla (\phi^{p/2}) \|_{\mathrm{L}^{2}( \Omega)^{d}}  \right)
$$ 
for a constant $C_{\Omega} > 0$. Furthermore $\| \phi^{p/2} \|_{\mathrm{L}^{1}( \Omega)} \leq |\Omega|^{1/2} 
\| \phi^{p/2} \|_{\mathrm{L}^{2}( \Omega)}$ is true due to the boundedness of $\Omega$. We use Lemma \ref{truncation_lemma}
to conclude to
\begin{align*}
\| \gamma(\phi^{p/2}) \|_{\mathrm{L}^{2 \kappa}(\partial \Omega)}^{2} & \leq S_{\Omega} 
\left( \| \phi^{p/2} \|_{\mathrm{L}^{2}( \Omega)}^{2} +   \| \nabla (\phi^{p/2}) \|_{\mathrm{L}^{2}( \Omega)^{d}}^{2}  \right) \\
& \leq S_{\Omega} (Kp^{2} + 1) \| \phi \|_{\mathrm{L}^{p}(\Omega)}^{p}
\end{align*}
holds for a constant $S_{\Omega} > 0$. Finally using $\gamma(\phi^{p/2}) = \gamma(\phi)^{p/2}$ from vi) in the proof of 
Lemma \ref{truncation_lemma} gives $\gamma(\phi)^{p/2} \in \mathrm{L}^{2 \kappa}(\partial \Omega)$. So
$\gamma(\phi)$ is contained in $\mathrm{L}^{\kappa p}(\partial \Omega)$.\\
\end{proof}

\subsubsection{Iteration to $\mathrm{L}^{\infty}(\Omega)$}\label{iteration_subsection}

We now use a constant $\kappa = (d-1)/(d-2) > 1$ for all following arguments. 

\begin{lem}\label{iteration}
Let $p \geq 2$ and $0 \leq \phi \in D( L(A,-C) )$ satisfy  
\begin{enumerate}[a)]
\item $L(A,-C)\phi = \mu \phi$,
\item $\phi \in \mathrm{L}^{p}(\Omega)$ and
\item $\gamma(\phi) \in \mathrm{L}^{p}(\partial \Omega)$.
\end{enumerate}
Then $\phi \in \mathrm{L}^{\kappa p}(\Omega)$ is true and further there exists a constant $\tilde{K}> 0$ independent of $p$ 
with 
$$
\| \phi \|_{\mathrm{L}^{\kappa p}(\Omega)}^{p} \ \leq \ \tilde{K} p^{2} 
\| \phi \|_{\mathrm{L}^{p}(\Omega)}^{p}.
$$
\end{lem}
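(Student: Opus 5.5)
The plan is to combine Lemma \ref{truncation_lemma} with a Sobolev embedding applied to $\phi^{p/2}$. By Lemma \ref{truncation_lemma}, assumptions a)--c) give $\phi^{p/2} \in H^{1}(\Omega)$ together with
$$
\left\| \nabla ( \phi^{p/2} ) \right\|_{\mathrm{L}^{2}(\Omega)^{d}}^{2} \ \leq \ Kp^{2} \left\| \phi \right\|_{\mathrm{L}^{p}(\Omega)}^{p},
$$
where $K \geq 1$ is independent of $p$. The remaining task is to move from $H^{1}(\Omega)$ to $\mathrm{L}^{2\kappa}(\Omega)$ with a constant that does not depend on $p$.

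For the embedding, note that $\Omega$ is a bounded Lipschitz domain and $d>2$. Hence $H^{1}(\Omega)$ embeds continuously into $\mathrm{L}^{2^{\ast}}(\Omega)$ with $2^{\ast} = 2d/(d-2)$. Since $d-1 < d$, we have
$$
2\kappa \ = \ \frac{2(d-1)}{d-2} \ < \ \frac{2d}{d-2} \ = \ 2^{\ast}.
$$
Because $|\Omega| < \infty$, Hölder's inequality then gives a continuous embedding $H^{1}(\Omega) \hookrightarrow \mathrm{L}^{2\kappa}(\Omega)$. So there is a constant $S>0$, depending only on $\Omega$ and $d$, with
$$
\| w \|_{\mathrm{L}^{2\kappa}(\Omega)}^{2} \ \leq \ S \left( \| \nabla w \|_{\mathrm{L}^{2}(\Omega)^{d}}^{2} + \| w \|_{\mathrm{L}^{2}(\Omega)}^{2} \right)
$$
for every $w \in H^{1}(\Omega)$. (Alternatively, the trace-type inequality of \cite{MaggiVillani2005} used in Corollary \ref{trace_iteration} has an interior analogue, but the standard Sobolev embedding is enough.)

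Apply this inequality to $w = \phi^{p/2} \geq 0$. Since $\|\phi^{p/2}\|_{\mathrm{L}^{2\kappa}(\Omega)}^{2} = \|\phi\|_{\mathrm{L}^{\kappa p}(\Omega)}^{p}$ and $\|\phi^{p/2}\|_{\mathrm{L}^{2}(\Omega)}^{2} = \|\phi\|_{\mathrm{L}^{p}(\Omega)}^{p}$, we get
$$
\| \phi \|_{\mathrm{L}^{\kappa p}(\Omega)}^{p} \ \leq \ S (Kp^{2} + 1) \| \phi \|_{\mathrm{L}^{p}(\Omega)}^{p} \ \leq \ 2SK p^{2} \| \phi \|_{\mathrm{L}^{p}(\Omega)}^{p},
$$
using $K \geq 1$ and $p \geq 2$. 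In particular the left-hand side is finite, so $\phi \in \mathrm{L}^{\kappa p}(\Omega)$, and the claim holds with $\tilde{K} = 2SK$.

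The real work sits in Lemma \ref{truncation_lemma}, which we assume. Apart from that, the only point needing care is that the constant stays independent of $p$. This holds because $S$ depends only on $\Omega$, $d$ and the fixed exponent $2\kappa$, and the only $p$-dependence, the factor $p^{2}$, enters through the gradient estimate of Lemma \ref{truncation_lemma}.
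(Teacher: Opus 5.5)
Your proof is correct and follows the paper's argument: you apply Lemma \ref{truncation_lemma} to get $\phi^{p/2}\in H^{1}(\Omega)$ with the gradient bound, then apply the Sobolev embedding $H^{1}(\Omega)\hookrightarrow \mathrm{L}^{2\kappa}(\Omega)$ to $\phi^{p/2}$ to obtain the bound $S(Kp^{2}+1)\|\phi\|_{\mathrm{L}^{p}(\Omega)}^{p}$. Beyond the paper, you justify the embedding via $2\kappa<2d/(d-2)$ and give the explicit constant $\tilde{K}=2SK$.
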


\begin{proof}
By Lemma \ref{truncation_lemma} we conclude that $\phi^{p/2} \in H^{1}(\Omega)$ is implied with
$$
\left\| \nabla ( \phi^{p/2} ) \right\|_{\mathrm{L}^{2}(\Omega)^{d}}^{2} \ \leq \ Kp^{2} 
\left\| \phi \right\|_{\mathrm{L}^{p}(\Omega)}^{p}.
$$
for a constant $K>0$. We use 
Sobolev embedding theorems for the choice of $\kappa$ to argue that there exists a constant $S>0$ satisfying 
\begin{align*}
\| \phi^{p/2} \|_{\mathrm{L}^{2\kappa}(\Omega)}^{2} & \leq S \| \phi^{p/2} \|_{H^{1}(\Omega)}^{2}
= S \left( \left\| \nabla ( \phi^{p/2} ) \right\|_{\mathrm{L}^{2}(\Omega)^{d}}^{2} + 
\left\| \phi^{p/2} \right\|_{\mathrm{L}^{2}(\Omega)}^{2} \right)  \\
& \leq S \left( Kp^{2} 
\left\| \phi \right\|_{\mathrm{L}^{p}(\Omega)}^{p} + \left\| \phi \right\|_{\mathrm{L}^{p}(\Omega)}^{p}  \right) 
= S(Kp^{2} + 1)  \left\| \phi \right\|_{\mathrm{L}^{p}(\Omega)}^{p}  
\end{align*}
which proves the claim.\\
\end{proof}

Now we prove an essential boundedness of the positive eigenfunction $\phi$ of $L(A,-C)$ by an iteration argument.

\begin{thm}\label{iteration_thm}
Let $\phi \in D( L(A,-C) )$ be the positive eigenfunction of  $L(A,-C)$ from Subsection \ref{existence_phi} with no 
additional assumptions apart from $L(A,-C)\phi = \mu \phi$. Then $\phi \in \mathrm{L}^{\infty}(\Omega)$ is implied.
\end{thm}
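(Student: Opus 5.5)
The plan is to run the Moser-type iteration sketched at the beginning of this subsection, using Lemma \ref{iteration} and Corollary \ref{trace_iteration} in tandem. Set $p_{0}=2$ and $p_{n+1}=\kappa p_{n}$, so $p_{n}=2\kappa^{n}\to\infty$ because $\kappa=(d-1)/(d-2)>1$.

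\textbf{Starting step.} Since $\phi\in D(L(A,-C))\subset H^{1}(\Omega)$, we have $\phi\in\mathrm{L}^{2}(\Omega)$ and $\gamma(\phi)\in\mathrm{L}^{2}(\partial\Omega)$. Hence the hypotheses of Lemma \ref{iteration} and Corollary \ref{trace_iteration} hold for $p=p_{0}=2$.

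\textbf{Inductive step.} Suppose $\phi\in\mathrm{L}^{p_{n}}(\Omega)$ and $\gamma(\phi)\in\mathrm{L}^{p_{n}}(\partial\Omega)$.
\begin{itemize}
\item Lemma \ref{iteration} gives $\phi\in\mathrm{L}^{p_{n+1}}(\Omega)$ together with
$$
\|\phi\|_{\mathrm{L}^{p_{n+1}}(\Omega)}\leq(\tilde K p_{n}^{2})^{1/p_{n}}\|\phi\|_{\mathrm{L}^{p_{n}}(\Omega)} .
$$
\item Corollary \ref{trace_iteration} gives $\gamma(\phi)\in\mathrm{L}^{p_{n+1}}(\partial\Omega)$.
\end{itemize}
By induction both memberships hold for every $n$. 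This is where $C{\bf 1}\in\mathrm{L}^{\infty}(\partial\Omega)$ enters, through the uniform bound $M_{C}$ from Lemma \ref{C_interpolation} used in Lemma \ref{truncation_lemma}. That uniformity is exactly why $\tilde K$ is independent of $p$.

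\textbf{Summing the exponents.} Iterating the norm inequality gives
$$
\|\phi\|_{\mathrm{L}^{p_{n}}(\Omega)}\leq\|\phi\|_{\mathrm{L}^{2}(\Omega)}\prod_{j=0}^{n-1}(\tilde K p_{j}^{2})^{1/p_{j}} .
$$
Taking logarithms, the product is bounded uniformly in $n$ provided
$$
\sum_{j}\frac{\log\tilde K+2\log 2+2j\log\kappa}{2\kappa^{j}}<\infty ,
$$
which holds because the numerator grows at most linearly in $j$ and $\kappa^{-j}$ decays geometrically. This yields a constant $M$ with $\|\phi\|_{\mathrm{L}^{p_{n}}(\Omega)}\leq M\|\phi\|_{\mathrm{L}^{2}(\Omega)}$ for all $n$.

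\textbf{Concluding boundedness.} Fix $a>M\|\phi\|_{\mathrm{L}^{2}(\Omega)}$. Chebyshev's inequality gives
$$
|\{\phi>a\}|\leq\bigl(M\|\phi\|_{\mathrm{L}^{2}(\Omega)}/a\bigr)^{p_{n}}\to0 \quad (n\to\infty).
$$
Hence $0\leq\phi\leq M\|\phi\|_{\mathrm{L}^{2}(\Omega)}$ almost everywhere, and $\phi\in\mathrm{L}^{\infty}(\Omega)$.

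The main obstacle is the simultaneous induction on interior and boundary integrability: the truncation lemma needs $\gamma(\phi)\in\mathrm{L}^{p}(\partial\Omega)$ at every stage. I would check that Corollary \ref{trace_iteration} applies with exactly the same exponent $\kappa$. It does, since the trace Sobolev exponent $2(d-1)/(d-2)$ equals $2\kappa$, and $\kappa$ was chosen precisely so that the two gains coincide. The interior embedding $H^{1}\hookrightarrow\mathrm{L}^{2\kappa}$ is then valid because $2\kappa\leq 2d/(d-2)$, so the only quantitative input that needs care is that $\tilde K$ is independent of $p$.
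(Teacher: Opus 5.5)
Your proposal is correct and follows the paper's proof essentially step for step. You use the same sequence $p_n=2\kappa^n$ and the same joint induction on $\phi\in\mathrm{L}^{p_n}(\Omega)$ and $\gamma(\phi)\in\mathrm{L}^{p_n}(\partial\Omega)$ via Lemma \ref{iteration} and Corollary \ref{trace_iteration}, the same logarithmic series $\sum_j(\ln\tilde K+2\ln 2+2j\ln\kappa)/(2\kappa^j)<\infty$ to bound the product, and the same Chebyshev argument yielding $0\le\phi\le M\|\phi\|_{\mathrm{L}^2(\Omega)}$ almost everywhere.
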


\begin{proof}
Set $p_{0} = 2$ and $p_{n+1} = \kappa p_{n}$. Notice that $p_{n} = 2\kappa^{n} \to \infty$
for $n \to \infty$ since $\kappa > 1$. 
\begin{enumerate}[i)]
\item Initially $\phi$ is contained in $\mathrm{L}^{p_{0}}(\Omega)$ with 
$\gamma(\phi) \in \mathrm{L}^{p_{0}}(\partial \Omega)$.
By Lemma \ref{iteration} and Corollary \ref{trace_iteration} we conclude that $\phi$ is contained in 
$\mathrm{L}^{p_{1}}(\Omega)$ with $\gamma(\phi) \in \mathrm{L}^{p_{1}}(\partial \Omega)$. \\

\item Iteratively $\phi$ and $\gamma(\phi)$ are contained in 
$\mathrm{L}^{p_{n}}(\Omega)$ with $\mathrm{L}^{p_{n}}(\partial \Omega)$ respectively for any $n \in \N$. 
Furthermore Lemma \ref{iteration} states that
$$
\| \phi \|_{\mathrm{L}^{p_{n}}(\Omega)} \ \leq \ \left( \tilde{K} p_{n-1}^{2} \right)^{1/p_{n-1}} 
\| \phi \|_{\mathrm{L}^{p_{n-1}}(\Omega)} \ = \ \| \phi \|_{\mathrm{L}^{2}(\Omega)} 
\prod_{j=0}^{n-1} \left( \tilde{K} p_{j}^{2} \right)^{1/p_{j}}.
$$

\item Notice that $\prod_{j=0}^{n-1} \left( \tilde{K} p_{j}^{2} \right)^{1/p_{j}}$ converges for $n \to \infty$ since 
$$
\sum_{j=0}^{\infty} \frac{\ln(\tilde{K}) + 2 \ln(p_{j}) }{p_{j}} = \frac{1}{2} \left( \ln(\tilde{K}) + 2 \ln(2) \right) 
\sum_{j=0}^{\infty} \kappa^{-j} + \ln (\kappa) \sum_{j=0}^{\infty} j \kappa^{-j} < \infty
$$
is true where we used $p_{j} = 2 \kappa^{j}$ and the ratio test for the convergence of a series to justify
the convergence of $\sum_{j=0}^{\infty} j \kappa^{-j}$. 
Therefore there exists a constant $M > 0$ independent of $n \in \N$ such that
$$
\| \phi \|_{\mathrm{L}^{p_{n}}(\Omega)} \ \leq \ M  \| \phi \|_{\mathrm{L}^{2}(\Omega)}.
$$

\item Now choose $a > M  \| \phi \|_{\mathrm{L}^{2}(\Omega)}$ and define a set $A$ by
$A = \left\{  x \in \Omega \ : \ \phi(x) > a \right\}$.
Then
$$
\left| A \right| = \int_{A} {\bf 1} \d x \leq a^{-p_{n}} \int_{A} \phi^{p_{n}} \d x 
\leq  a^{-p_{n}}  \| \phi \|_{\mathrm{L}^{p_{n}}(\Omega)}^{p_{n}}
\leq \left( \frac{M \| \phi \|_{\mathrm{L}^{2}(\Omega)}}{a} \right)^{p_{n}} \to 0
$$
is true for $n \to \infty$. Hence $A$ is a Lebesgue null set and therefore 
$$
0 \leq \phi \leq M  \| \phi \|_{\mathrm{L}^{2}(\Omega)}
$$
is true almost everywhere in $\Omega$.
\end{enumerate}
\end{proof}

\subsection{Lower bound of $\phi$}

The aim of our argumentation is to prove that the positive eigenfunction $\phi$ obtained in the preceding 
Subsection \ref{existence_phi} is bounded below by a strictly positive constant $\delta$.
We introduce the mean-value projection $P \colon \mathrm{L}^{2}(\Omega) \to \mathrm{L}^{2}(\Omega)$ defined as
\begin{equation}\label{definition_P}
Pu \ = \ \frac{1}{|\Omega|} \langle u, {\bf 1} \rangle_{\mathrm{L}^{2}(\Omega)} {\bf 1} \ = \ 
\frac{1}{|\Omega|} \left( \int_{\Omega} u \d x \right) {\bf 1}
\end{equation}
and we use the Neumann semigroup $( \mathrm{e}^{-tL(A,0)} )_{t \geq 0}$ in 
$\mathrm{L}^{2}(\Omega)$ which was introduced in Subsection \ref{Neumann_semigroups_introduction}.
Our main arguments are:
\begin{enumerate}
\item A semigroup domination
\begin{equation}\label{Neumann_comparison_inequ}
0 \ \leq \ \mathrm{e}^{-tL(A,0)} \ \leq \ \mathrm{e}^{-tL(A,-C)}
\end{equation}
which is implied by Lemma \ref{resolvent_comparison} in combination with Euler's formula. Please see Theorem 2.4.2 in \cite{Arendt06}.
\item The asymptotic behaviour of $( \mathrm{e}^{-tL(A,0)} )_{t \geq 0}$ in 
$\mathcal{L} \left(\mathrm{L}^{2}(\Omega) \right)$. In particular,
we use Theorem 10.4.1 in \cite{Arendt06} to conclude 
$$
\left\| \mathrm{e}^{-tL(A,0)}  - P \right\|_{\mathrm{L}^{2} \to \mathrm{L}^{2}} \ \to \ 0 \ \text{for} \ t \to \infty.
$$
\item An ultracontractivity of the Neumann semigroup $( \mathrm{e}^{-tL(A,0)} )_{t \geq 0}$ in 
$\mathrm{L}^{2}(\Omega)$.\\
\end{enumerate}
First, we focus on the asymptotic behaviour of $( \mathrm{e}^{-tL(A,0)} )_{t \geq 0}$ in 
$\mathcal{L} \left(\mathrm{L}^{2}(\Omega) \right)$.\\

\begin{lem}\label{Neumann_asymptotic}
The Neumann semigroup converges in operator norm to the
mean-value projection $P$, that is,
$$
\left\| \mathrm{e}^{-tL(A,0)}  - P \right\|_{\mathrm{L}^{2} \to \mathrm{L}^{2}} \ \to \ 0 \ (t \to \infty).
$$
\end{lem}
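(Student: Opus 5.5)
Because the form $a(A,0)$ is not symmetric in general ($A$ need not be symmetric), I will not use the spectral theorem. Instead I will use compactness of the resolvent together with a spectral gap argument, essentially following the route of Theorem 10.4.1 in \cite{Arendt06}.

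\textbf{Step 1: the Neumann semigroup is analytic and eventually compact.} The form $a(A,0)+\lambda$ is continuous, closed and coercive on $H^1(\Omega)$. Hence $-L(A,0)$ generates a holomorphic semigroup on some sector. Moreover, as in the proof for $R_\lambda$, the resolvent $(\lambda+L(A,0))^{-1}$ maps $\mathrm{L}^2(\Omega)$ boundedly into $H^1(\Omega)$, so it is compact by Rellich--Kondrachov. Holomorphy together with a compact resolvent makes $\mathrm{e}^{-tL(A,0)}$ compact for every $t>0$. Consequently the spectrum of $L(A,0)$ consists only of isolated eigenvalues of finite algebraic multiplicity, and the spectral mapping theorem $\sigma(\mathrm{e}^{-tL(A,0)})\setminus\{0\}=\mathrm{e}^{-t\sigma(L(A,0))}$ holds.

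\textbf{Step 2: locate the spectrum.} If $L(A,0)u=\mu u$ with $u\neq0$, then testing with $u$ gives
$$
\mu\|u\|_{\mathrm{L}^2}^2=a(A,0)(u,u), \qquad \Re\mu\,\|u\|^2\ge\alpha\|\nabla u\|^2\ge0 .
$$
If $\Re\mu=0$, then $\nabla u=0$, so $u$ is constant because $\Omega$ is connected, and hence $\mu=0$. Thus $0$ is the only spectral value on the imaginary axis, and its eigenspace is $\mathrm{span}\{{\bf 1}\}$.

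\textbf{Step 3: the spectral projection at $0$ equals $P$.} This is the step I expect to be the main obstacle, since in the non-symmetric case one must exclude Jordan blocks and identify the projection. Two observations handle it.
\begin{enumerate}[a)]
\item The adjoint $L(A,0)^\ast=L(A^T,0)$ is again a Neumann operator, so $L(A,0)^\ast{\bf 1}=0$. Equivalently, $\int_\Omega L(A,0)u\,\mathrm{d}x = a(A,0)(u,{\bf 1})=0$ for all $u\in D(L(A,0))$. Hence $\mathrm{e}^{-tL(A,0)}$ preserves integrals, so it commutes with $P$.
\item The pole at $0$ is simple. Suppose $L(A,0)u={\bf 1}$. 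Integrating gives $|\Omega|=\int_\Omega L(A,0)u\,\mathrm{d}x=0$, a contradiction. Therefore the algebraic multiplicity is $1$.
\end{enumerate}
The spectral projection $Q$ at $0$ therefore has range $\mathrm{span}\{{\bf 1}\}$ and kernel equal to the range of $L(A,0)$, which lies in $\{u:\int_\Omega u\,\mathrm{d}x=0\}=\ker P$. It follows that $Q=P$.

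\textbf{Step 4: conclude.} On $\ker P$ the restricted generator has spectrum contained in $\{\Re\mu>0\}$. Since the spectrum is discrete and, by the sector property of the holomorphic semigroup, lies in a sector, there is some $\omega>0$ with $\Re\mu\ge\omega$ on the restricted spectrum. For the eventually compact restricted semigroup the growth bound equals the spectral bound, so
$$
\|\mathrm{e}^{-tL(A,0)}(I-P)\|\le M\mathrm{e}^{-\omega t/2}.
$$
Since $\mathrm{e}^{-tL(A,0)}P=P$, this gives
$$
\|\mathrm{e}^{-tL(A,0)}-P\|\le M\mathrm{e}^{-\omega t/2}\to0 .
$$
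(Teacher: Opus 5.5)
Your proof is correct, but it takes a different route from the paper. The paper checks the hypotheses of a Perron--Frobenius-type convergence theorem for positive irreducible holomorphic semigroups with compact resolvent (Theorem 10.4.1 in \cite{Arendt06}). These hypotheses are:
\begin{enumerate}[i)]
\item positivity and irreducibility of the Neumann semigroup, via Theorem 11.2.1 there and the connectedness of $\Omega$;
\item holomorphy of the semigroup;
\item compactness of the resolvent;
\item $L(A,0){\bf 1}=0=L(A^{T},0){\bf 1}$;
\item $s(-L(A,0))=0$, obtained from accretivity.
\end{enumerate}
In that approach, simplicity of the eigenvalue $0$, the absence of other spectrum on the imaginary axis, and the identification of the limit with the rank-one projection $P$ all come from the positivity theory.

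You bypass the order structure entirely. Ellipticity and connectedness show that $0$ is the only spectral value on the imaginary axis and that its eigenspace consists of the constants. The identity $\int_\Omega L(A,0)u\,\mathrm{d}x=a(A,0)(u,{\bf 1})=0$ excludes a Jordan block and identifies the spectral projection with $P$. Finally, the spectrum is discrete and lies in the sector given by the numerical range, and growth bound equals spectral bound for eventually compact semigroups; together these produce the spectral gap.

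The paper's version is shorter given the cited theorem and matches its overall reliance on positivity, but it needs irreducibility as an additional nontrivial input. Your argument is self-contained at the level of standard spectral theory and never uses positivity or irreducibility. It would therefore go through unchanged for complex coefficient matrices, where the Neumann semigroup need not be positive. It also gives an explicit exponential rate tied to the spectral gap.

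A minor point: your Step 3a is not actually needed, since invariance of $\ker P$ under the semigroup already follows from $Q=P$.
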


\begin{proof}
We verify the assumptions of Theorem 10.4.1 in \cite{Arendt06}.
\begin{enumerate}[i)]
\item The Neumann semigroup is positive and irreducible by Theorem 11.2.1 in \cite{Arendt06} 
since $\Omega$ is connected and the form domain is $H^1(\Omega)$.

\item The shifted form
$$
b(A,0)(u,v) = a(A,0)(u,v) +\langle u,v \rangle_{\mathrm{L}^2(\Omega)}
$$
is coercive. Hence $-(L(A,0)+I)$ generates a holomorphic semigroup
by Theorem 7.1.5 in \cite{Arendt06} . Consequently,
$-L(A,0)$ also generates a holomorphic semigroup.

\item The resolvents of $L(A,0)$ are compact since
$H^{1}(\Omega)$ is compactly embedded into $\mathrm{L}^{2}(\Omega)$.

\item Also $L(A,0) {\bf 1} = 0$ as well as 
$L(A,0)^{\ast} {\bf 1}  = L(A^{T},0) {\bf 1}  = 0$ are both true. \\
\end{enumerate}
It remains to determine the spectral bound of the generator. 
Compactness of the resolvents 
implies that every spectral value of $L(A,0)$ is an eigenvalue. If $L(A,0)u = \lambda u$ and $u \neq 0$ then accretivity gives
$$
\Re \lambda \|u\|_{\mathrm{L}^{2}(\Omega)}^{2} = \Re a(A,0)(u,u) \geq 0.
$$
Thus $\inf \{ \Re \lambda \ : \ \lambda \in \sigma(L(A,0)) \} \ \geq \ 0$. Since $L(A,0){\bf 1} = 0$, equality holds. Consequently,
\begin{align*}
s(-L(A,0)) & = \sup \{ \Re \mu \ : \ \mu \in \sigma(-L(A,0)) \} \\
& = \sup \{ -\Re \lambda \ : \ \lambda \in \sigma(L(A,0)) \} \\
& = - \inf \{ \Re \lambda \ : \ \lambda \in \sigma(L(A,0)) \} = 0.
\end{align*}
All necessary assumptions are shown. The claim is implied by Theorem 10.4.1 in \cite{Arendt06}.\\ 
\end{proof}

Now we need to establish the next important result of our argumentation. We prove the ultracontractivity 
of $( \mathrm{e}^{-tL(A,0)} )_{t \geq 0}$ in $\mathrm{L}^{2}(\Omega)$.\\

\begin{lem}\label{Neumann_ultracontractivity}
For every $t > 0$ the operator
$$
\mathrm{e}^{-tL(A,0)} \ \colon \ \mathrm{L}^{2}(\Omega) \to \mathrm{L}^{\infty}(\Omega)
$$
is bounded. More precisely, there exists $c > 0$ such that
$$
\left\| \mathrm{e}^{-tL(A,0)} \right\|_{\mathrm{L}^{2} \to \mathrm{L}^{\infty}} \ \leq \ c\mathrm{e}^{t}t^{-d/4}
$$
is true for every $t > 0$.
\end{lem}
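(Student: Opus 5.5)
The plan is to repeat the Nash argument of Lemma \ref{adjoint_semigroup_L1_L2}, applied to the Neumann semigroup and its adjoint, and to close with the same duality step used at the end of Section \ref{proof_main_theorem}. The Neumann case is simpler because no eigenfunction is needed: the semigroup $\mathrm{e}^{-tL(A,0)}$ is positive (proof of Lemma \ref{inequ_semigroup_B}), and $L(A,0)\mathbf{1}=0$ gives $\mathrm{e}^{-tL(A,0)}\mathbf{1}=\mathbf{1}$. Together these yield
$$
\left|\mathrm{e}^{-tL(A,0)}u\right| \leq \mathrm{e}^{-tL(A,0)}|u| \leq \|u\|_{\mathrm{L}^{\infty}(\Omega)}\,\mathbf{1},
$$
so $\|\mathrm{e}^{-tL(A,0)}\|_{\mathrm{L}^{\infty}\to\mathrm{L}^{\infty}}\leq 1$. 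In effect, Proposition \ref{L_infty_boundedness} holds with $\phi=\mathbf{1}$, $\delta=M=1$ and $\mu=0$. The Hahn--Banach duality argument of Lemma \ref{L1_Linfty_semigroup} then shows that the adjoint semigroup $\mathrm{e}^{-tL(A,0)^{\ast}}=\mathrm{e}^{-tL(A^{T},0)}$ extends to a contraction on $\mathrm{L}^{1}(\Omega)$.

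Next I run the Nash differential inequality for $u\in D(L(A,0)^{\ast})$, setting
$$
v(t)=\mathrm{e}^{-t}\mathrm{e}^{-tL(A,0)^{\ast}}u .
$$
The coercivity estimate replacing (\ref{Ellipticity_A}) is $\Re a(A,0)^{\ast}(w,w)=\Re a(A,0)(w,w)\geq\alpha\|\nabla w\|^{2}_{\mathrm{L}^{2}(\Omega)^{d}}$, so no shift $\lambda_{0}$ is needed for the Neumann form. Differentiating gives
$$
\frac{\partial}{\partial t}\|v(t)\|_{\mathrm{L}^{2}(\Omega)}^{2}\leq -2\min(\alpha,1)\,\|v(t)\|_{H^{1}(\Omega)}^{2},
$$
and the $\mathrm{L}^{1}$-contractivity from the first step gives $\|v(t)\|_{\mathrm{L}^{1}(\Omega)}\leq\|u\|_{\mathrm{L}^{1}(\Omega)}$. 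Nash's inequality (\ref{Nash_inequality}) and the chain-rule computation of Lemma \ref{adjoint_semigroup_L1_L2} then show that $\frac{\partial}{\partial t}\|v(t)\|_{\mathrm{L}^{2}}^{-4/d}$ is bounded below by a positive constant times $\|u\|_{\mathrm{L}^{1}}^{-4/d}$. Integrating over $[0,t]$ yields
$$
\|v(t)\|_{\mathrm{L}^{2}(\Omega)}\leq c\,t^{-d/4}\|u\|_{\mathrm{L}^{1}(\Omega)} .
$$
Multiplying back by $\mathrm{e}^{t}$ gives $\|\mathrm{e}^{-tL(A,0)^{\ast}}u\|_{\mathrm{L}^{2}}\leq c\,\mathrm{e}^{t}t^{-d/4}\|u\|_{\mathrm{L}^{1}}$. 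By density of $D(L(A,0)^{\ast})$ in $\mathrm{L}^{1}(\Omega)$ (part iii of that proof), this extends to all of $\mathrm{L}^{1}(\Omega)$.

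Finally, taking the Banach space adjoint of the extension $\mathrm{L}^{1}\to\mathrm{L}^{2}$, exactly as in the Duality subsection, gives an operator $\mathrm{L}^{2}\to\mathrm{L}^{\infty}$ that agrees with $\mathrm{e}^{-tL(A,0)}$ on $\mathrm{L}^{2}(\Omega)$. Its norm is the same bound $c\,\mathrm{e}^{t}t^{-d/4}$, which is the claim.

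No step is a real obstacle, since everything is a specialisation of Section \ref{proof_main_theorem}. The only point requiring care is that the exponential factor comes out as exactly $\mathrm{e}^{t}$, so the shift must be chosen as $1$. If the constant $\min(\alpha,1)$ causes trouble, I would absorb it into $c$ and use the $H^{1}$-form $a(A,0)+\langle\cdot,\cdot\rangle$ of Lemma \ref{Neumann_asymptotic} ii). Alternatively, one can avoid the adjoint altogether: the Neumann semigroup is also $\mathrm{L}^{1}$-contractive by the symmetric argument with $A^{T}$, so the Nash argument can be applied directly to $\mathrm{e}^{-tL(A,0)}$, followed by the same duality step.
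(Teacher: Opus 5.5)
Your proof is correct and follows the paper's strategy: the paper shifts the form by $1$ and cites Theorem 12.3.2 (Nash's inequality on $H^{1}(\Omega)$ plus submarkovianity of the semigroup and its adjoint), and you inline that theorem via the Nash--duality computation of Lemma \ref{adjoint_semigroup_L1_L2}, using only the $\mathrm{L}^{\infty}$-contractivity of $\mathrm{e}^{-tL(A,0)}$, which you derive from positivity and $\mathrm{e}^{-tL(A,0)}\mathbf{1}=\mathbf{1}$ rather than asserting it. Your closing ``alternative'' is slightly off: running Nash on $\mathrm{e}^{-tL(A,0)}$ and then dualising bounds $\mathrm{e}^{-tL(A^{T},0)}$ from $\mathrm{L}^{2}$ to $\mathrm{L}^{\infty}$, so it does not avoid the adjoint but only swaps the roles of $A$ and $A^{T}$.
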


\begin{proof}
The shifted form
$$
b(A,0)(u,v) = a(A,0)(u,v) + \langle u, v \rangle_{\mathrm{L}^{2}(\Omega)}
$$
with $u,v \in D(b(A,0)) = H^{1}(\Omega)$ is coercive and its associated semigroup is 
$$
S(t) = \mathrm{e}^{-t}\mathrm{e}^{-tL(A,0)}.
$$
Since $\Omega$ is a bounded Lipschitz domain, $H^{1}(\Omega)$ satisfies the Nash inequality of dimension $d$.
Moreover, the semigroup associated with $b(A,0)$ and its adjoint are submarkovian. Theorem 12.3.2 in \cite{Arendt06}
gives
$$
\left\| S(t) \right\|_{\mathrm{L}^{2} \to \mathrm{L}^{\infty}} \ \leq \ c t^{-d/4}
$$
which implies the claim since $S(t) = \mathrm{e}^{-t}\mathrm{e}^{-tL(A,0)}$.\\
\end{proof}

\begin{rem}
Notice that the shift in the previous proof is not introduced to obtain submarkovianity: both $\mathrm{e}^{-tL(A,0)}$ 
and its adjoint are already submarkovian as needed for Theorem 12.3.2 in \cite{Arendt06}.
Its purpose is to absorb the $\mathrm{L}^2$-term occurring in the inhomogeneous Nash inequality 
into the form $b(A,0)$. Indeed, a homogeneous Nash inequality cannot
hold for $a(A,0)$, since $\mathfrak a(A,0)({\bf 1},{\bf 1}) = 0$.
The shifted form $b(A, 0)$ is therefore the appropriate form to
which the ultracontractivity theorem can be applied.\\
\end{rem}

\begin{lem}\label{Neumann_projection}
For every time $t \geq 0$ we have $\mathrm{e}^{-tL(A,0)}P = P$
in $\mathcal{L} \left(\mathrm{L}^{2}(\Omega) \right)$.
\end{lem}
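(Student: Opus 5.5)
The range of $P$ is the one-dimensional space spanned by ${\bf 1}$. For every $u \in \mathrm{L}^{2}(\Omega)$ we have $Pu = c_{u}{\bf 1}$ with the scalar
$$
c_{u} = \frac{1}{|\Omega|}\int_{\Omega} u \d x .
$$
By linearity of $\mathrm{e}^{-tL(A,0)}$ it therefore suffices to prove
$$
\mathrm{e}^{-tL(A,0)}{\bf 1} = {\bf 1}
$$
for every $t \geq 0$.

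To do this, I will use the fact, shown in the proof of Lemma \ref{1_Neumann_eigenvalue} i) and again in Lemma \ref{Neumann_asymptotic} iv), that ${\bf 1} \in D(L(A,0))$ with $L(A,0){\bf 1} = 0$. This holds because $a(A,0)({\bf 1},v)=0$ for every $v \in H^{1}(\Omega)$, since $\nabla {\bf 1} = 0$. I then set $w(t) = \mathrm{e}^{-tL(A,0)}{\bf 1}$. Standard $C_{0}$-semigroup theory gives $w \in C^{1}([0,\infty);\mathrm{L}^{2}(\Omega))$ together with
$$
w'(t) = -\mathrm{e}^{-tL(A,0)}L(A,0){\bf 1} = 0 .
$$
Hence $w$ is constant, and $w(t)=w(0)={\bf 1}$.

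As an alternative route, one can argue through the resolvent. Lemma \ref{1_Neumann_eigenvalue} i) gives $(\lambda+L(A,0))^{-1}{\bf 1} = \lambda^{-1}{\bf 1}$ for all large $\lambda$. Euler's formula $\mathrm{e}^{-tL(A,0)}{\bf 1} = \lim_{n\to\infty}\bigl(\tfrac{n}{t}(\tfrac{n}{t}+L(A,0))^{-1}\bigr)^{n}{\bf 1}$ then yields ${\bf 1}$ directly.

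Finally, applying the identity $\mathrm{e}^{-tL(A,0)}{\bf 1} = {\bf 1}$ to $Pu = c_{u}{\bf 1}$ gives
$$
\mathrm{e}^{-tL(A,0)}Pu = c_{u}\,\mathrm{e}^{-tL(A,0)}{\bf 1} = c_{u}{\bf 1} = Pu
$$
for all $u$, which is the claim. There is no real obstacle here. The only point needing care is the differentiability of $t\mapsto \mathrm{e}^{-tL(A,0)}{\bf 1}$, and this is guaranteed because ${\bf 1}$ lies in the domain of the generator.
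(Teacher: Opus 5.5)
Your proof is correct and follows essentially the same route as the paper: by linearity you reduce to $\mathrm{e}^{-tL(A,0)}{\bf 1} = {\bf 1}$, which comes from ${\bf 1} \in D(L(A,0))$ with $L(A,0){\bf 1} = 0$ (Lemma \ref{1_Neumann_eigenvalue}). You also spell out a step the paper leaves implicit, namely that $t \mapsto \mathrm{e}^{-tL(A,0)}{\bf 1}$ is differentiable with zero derivative because ${\bf 1}$ lies in the domain of the generator.
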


\begin{proof}
Let $t \geq 0$ be arbitrary but fixed. Using Lemma \ref{1_Neumann_eigenvalue} we see that ${\bf 1}$ is an eigenfunction
of $L(A,0)$ to the eigenvalue $0$. So 
$\mathrm{e}^{-tL(A,0)}{\bf 1} = {\bf 1}$
holds almost everywhere in $\Omega$.
Hence $\mathrm{e}^{-tL(A,0)}Pu = Pu$ for every $u \in \mathrm{L}^{2}(\Omega)$ is implied by
$$
Pu \ = \ \frac{1}{|\Omega|} \langle u, {\bf 1} \rangle_{\mathrm{L}^{2}(\Omega)} {\bf 1}.
$$
\end{proof}

Let us now prove the main result of this subsection.\\

\begin{thm}\label{lower_bound_phi}
There exists a constant $\delta > 0$ such that 
$\delta \leq \phi$ holds pointwise almost everywhere in $\Omega$.
\end{thm}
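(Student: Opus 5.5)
We prove Theorem \ref{lower_bound_phi} by comparing $\phi$ with the Neumann semigroup, using the three ingredients listed above. Since $\phi$ is an eigenfunction, $\mathrm{e}^{-tL(A,-C)}\phi = \mathrm{e}^{-t\mu}\phi$ for every $t\geq 0$. Combining this with the domination (\ref{Neumann_comparison_inequ}) and $\phi\geq 0$ gives
$$
\phi \ = \ \mathrm{e}^{t\mu}\,\mathrm{e}^{-tL(A,-C)}\phi \ \geq \ \mathrm{e}^{t\mu}\,\mathrm{e}^{-tL(A,0)}\phi
$$
almost everywhere, for every $t\geq 0$. It therefore suffices to find one time $t_{0}>0$ such that $\mathrm{e}^{-t_{0}L(A,0)}\phi \geq c$ almost everywhere for some constant $c>0$. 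We can then take $\delta = \mathrm{e}^{t_{0}\mu}c$, which is positive because $\mu$ is finite (indeed $\mu\leq 0$, so $\mathrm{e}^{t_0\mu}\in(0,1]$).

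The candidate limit is $P\phi = |\Omega|^{-1}\bigl(\int_\Omega\phi\d x\bigr){\bf 1}$. This is a strictly positive constant function, because $\phi\geq 0$ and $\phi\neq 0$ by Theorem \ref{mu_eigenvalue}; write $m = |\Omega|^{-1}\int_\Omega \phi\d x>0$. Lemma \ref{Neumann_asymptotic} only controls $\mathrm{e}^{-tL(A,0)}\phi - P\phi$ in $\mathrm{L}^{2}$, but we need control in $\mathrm{L}^{\infty}$. We upgrade the norm by splitting off one unit of time. By the semigroup law and Lemma \ref{Neumann_projection}, for $t\geq 1$,
$$
\mathrm{e}^{-tL(A,0)}\phi - P\phi \ = \ \mathrm{e}^{-L(A,0)}\bigl(\mathrm{e}^{-(t-1)L(A,0)}\phi - P\phi\bigr).
$$
Lemma \ref{Neumann_ultracontractivity} then yields
$$
\bigl\|\mathrm{e}^{-tL(A,0)}\phi - P\phi\bigr\|_{\mathrm{L}^{\infty}} \ \leq \ c\,\mathrm{e}\,\bigl\|\mathrm{e}^{-(t-1)L(A,0)} - P\bigr\|_{\mathrm{L}^{2}\to \mathrm{L}^{2}}\,\|\phi\|_{\mathrm{L}^{2}},
$$
and the right-hand side tends to $0$ as $t\to\infty$ by Lemma \ref{Neumann_asymptotic}. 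Choose $t_{0}\geq 1$ so large that this quantity is at most $m/2$. Then $\mathrm{e}^{-t_{0}L(A,0)}\phi\geq m/2$ almost everywhere, and hence $\phi\geq \mathrm{e}^{t_{0}\mu}m/2=:\delta>0$.

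The main point needing care is the domination (\ref{Neumann_comparison_inequ}) at the level of semigroups rather than resolvents. Lemma \ref{resolvent_comparison} gives $0\leq (\lambda+L(A,0))^{-1}\leq(\lambda+L(A,-C))^{-1}$ for all sufficiently large $\lambda$. Taking powers preserves this order between positive operators, so the Euler formula $\mathrm{e}^{-tL}u=\lim_{n}\bigl(\tfrac{n}{t}(\tfrac{n}{t}+L)^{-1}\bigr)^{n}u$ (with $n/t$ large) passes the order to the limit, since $\mathrm{L}^{2}$-convergence gives almost everywhere convergence along a subsequence. The paper cites this step, so here it only needs to be invoked. The remaining steps are just the use of the eigenvalue equation $\mathrm{e}^{-tL(A,-C)}\phi=\mathrm{e}^{-t\mu}\phi$ (valid since $\phi\in D(L(A,-C))$) and the $\mathrm{L}^{2}\to\mathrm{L}^{\infty}$ upgrade above.
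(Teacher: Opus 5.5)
Your proposal is correct and follows essentially the same route as the paper's proof. The paper also splits off a fixed time $s_0>0$ (you take $s_0=1$) and uses $\mathrm{e}^{-s_0L(A,0)}P=P$, the $\mathrm{L}^2\to\mathrm{L}^\infty$ bound for the Neumann semigroup and its $\mathrm{L}^2$-convergence to $P$ to obtain $\mathrm{e}^{-t_0L(A,0)}\phi\geq m_\phi/2$; it then concludes through the domination and the eigenvalue equation with the same $\delta=\mathrm{e}^{t_0\mu}m_\phi/2$.
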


\begin{proof}
\begin{enumerate}[i)]
\item Let $s_{0} > 0$ be arbitrary but fixed. Using Lemma \ref{Neumann_projection} we conclude
$$
\mathrm{e}^{-tL(A,0)} - P = \mathrm{e}^{-s_{0}L(A,0)} \left( \mathrm{e}^{-(t-s_{0})L(A,0)} - P \right)
$$
for any $t \geq s_{0}$. Using Lemma \ref{Neumann_ultracontractivity} we argue that
\begin{align*}
P\phi - \mathrm{e}^{-tL(A,0)}\phi & \leq \left| \mathrm{e}^{-tL(A,0)} \phi - P \phi \right| \\
& \leq \left\|  \mathrm{e}^{-s_{0}L(A,0)} \right\|_{\mathrm{L}^{2} \to \mathrm{L}^{\infty}} 
\left\|  \mathrm{e}^{-(t-s_{0})L(A,0)} - P \right\|_{\mathrm{L}^{2} \to \mathrm{L}^{2}} \| \phi \|_{\mathrm{L}^{2}(\Omega)}
\end{align*}
holds almost everywhere in $\Omega$ for every $t \geq s_{0}$.\\

\item Notice that $P\phi = \frac{1}{|\Omega|} \left( \int_{\Omega} \phi \d x \right) {\bf 1}$ is a constant function in 
$\mathrm{L}^{2}(\Omega)$. We define
$$
m_{\phi} =  \frac{1}{|\Omega|} \left( \int_{\Omega} \phi \d x \right) > 0
$$
such that $P \phi = m_{\phi} {\bf 1}$.

\item We choose $t_{0} = t_{0}(\phi) > s_{0}$ sufficiently large that
$$
\left\|  \mathrm{e}^{-(t_{0} - s_{0})L(A,0)} - P \right\|_{\mathrm{L}^{2} \to \mathrm{L}^{2}} <
\left\|  \mathrm{e}^{-s_{0}L(A,0)} \right\|_{\mathrm{L}^{2} \to \mathrm{L}^{\infty}}^{-1}\| \phi \|_{\mathrm{L}^{2}(\Omega)}^{-1}
\frac{m_{\phi} }{2}
$$
holds. Furthermore, Lemma \ref{Neumann_asymptotic} justifies the choice of $t_{0}$. Thus i) implies
$$
\frac{m_{\phi} }{2} {\bf 1} \ \leq \ \mathrm{e}^{-t_{0}L(A,0)}\phi \ \leq \ \mathrm{e}^{-t_{0}L(A,-C)}\phi = \mathrm{e}^{-t_{0}\mu}\phi
$$
almost everywhere in $\Omega$ where we used $\phi \geq 0$, the positivity of $\mathrm{e}^{-t_{0}L(A,0)}$ and 
(\ref{Neumann_comparison_inequ}).\\

\item We define 
$$
\delta = \mathrm{e}^{t_{0}\mu} m_{\phi}/2.
$$ 
Since $m_{\phi} > 0$, we have $\delta>0$. Moreover, by Theorem \ref{mu_eigenvalue}, $\mu \leq 0$ implies 
$\delta \leq m_{\phi}/2$.
Finally, $\phi \geq \delta$ is true almost everywhere in $\Omega$.
\end{enumerate}
\end{proof}




\section*{Declaration of generative AI and AI-assisted technologies in the manuscript preparation process}

During the preparation of this work, the author used ChatGPT, provided
by OpenAI, as a supporting tool in the development and verification of
parts of the arguments concerning the two-sided bounds for the
eigenfunction, in particular the power-truncation argument used for the
upper bound. The tool was also used to improve the language and
readability of the manuscript. The author carefully reviewed and
edited all resulting material and takes full responsibility for the
mathematical arguments and the content of the published article.\\

\bibliography{references}
\bibliographystyle{plain}

\end{document}